\begin{document}

\newcommand{\se}{\setcounter{equation}{0}}
\def\theequation{\thesection.\arabic{equation}}

\newtheorem{theorem}{Theorem}[section]
\newtheorem{cdf}{Corollary}[section]
\newtheorem{lemma}{Lemma}[section]
\newtheorem{remark}{Remark}[section]

\newcommand{\R}{\mathbb{R}}
\newcommand{\e}{{\bf e}}
\newcommand{\E}{{\bf E}}
\newcommand{\f}{{\bf f}}
\newcommand{\he}{\hat{\e}}
\newcommand{\hf}{\hat{\f}}
\newcommand{\te}{\tilde{\e}}

\newcommand{\bu}{{\bf u}}
\newcommand{\hbu}{\hat{\bu}}
\newcommand{\bv}{{\bf v}}
\newcommand{\bw}{{\bf w}}
\newcommand{\by}{{\bf y}}
\newcommand{\bz}{{\bf z}}
\newcommand{\bH}{{\bf H}}
\newcommand{\bJ}{{\bf J}}
\newcommand{\bL}{{\bf L}}

\newcommand{\bchi}{\mbox{\boldmath $\chi$}}
\newcommand{\bta}{\mbox{\boldmath $\eta$}}
\newcommand{\tbta}{{\tilde{\bta}}}
\newcommand{\bphi}{\mbox{\boldmath $\phi$}}
\newcommand{\bth}{\mbox{\boldmath $\theta$}}
\newcommand{\bxi}{\mbox{\boldmath $\xi$}}
\newcommand{\bzt}{\mbox{\boldmath $\zeta$}}

\newcommand{\hta}{\hat{\bta}}
\newcommand{\hth}{\hat{\bth}}
\newcommand{\hxi}{\hat{\bxi}}
\newcommand{\hzt}{\hat{\bzt}}

\newcommand{\td}{\tilde{\Delta}}

\newcommand{\bbu}{\bar{\bu}}
\newcommand{\bby}{\bar{\by}}
\newcommand{\bbz}{\bar{\bz}}

\def\cydot{\leavevmode\raise.4ex\hbox{.}}

\title
{ \large\bf A Two-level Finite Element Method for Viscoelastic Fluid Flow: \\
Non-smooth Initial Data}
\author{Deepjyoti Goswami\footnote{
Department of Mathematics, Universidade Federal do Paran\'a, Centro Polit\'ecnico, Curitiba,
Cx.P: 19081, CEP: 81531-990, PR, Brazil}}
\date{}
\maketitle

\begin{abstract}
In this article, we analyze a two-level finite element method for the equations of motion arising in the flow of $2D$ Oldroyd model with non-smooth initial data. It involves solving the non-linear problem on a coarse grid of mesh-size $H$ and solving a linearized problem on a fine grid of mesh-size $h,~h<<H$. The method gives optimal convergence rate for velocity in $H^1$-norm and for pressure in $L^2$-norm. The analysis takes in to account the loss of regularity of the solution of the Oldroyd model at initial time.
\end{abstract}

\vspace{0.30cm} 
\noindent
{\bf Key Words}. Viscoelastic fluids, Oldroyd fluid, two-level method, non-smooth initial data, optimal error estimates.

\vspace{.2in}
\section{Introduction}
We consider a two-level semi-discrete Galerkin approximations to the 
following system of equations of motion arising in the Oldroyd fluids of order one (see \cite{Old}):
\begin{eqnarray}\label{om}
 ~~\frac {\partial \bu}{\partial t}+\bu\cdot\nabla\bu-\mu\Delta\bu-\int_0^t \beta
 (t-\tau)\Delta\bu (x,\tau)\,d\tau+\nabla p=\f(x,t),~~x\in \Omega,~t>0
\end{eqnarray}
with incompressibility condition
\begin{eqnarray}\label{ic}
 \nabla \cdot \bu=0,~~~x\in \Omega,~t>0,
\end{eqnarray}
and initial and boundary conditions
\begin{eqnarray}\label{ibc}
 \bu(x,0)=\bu_0~~\mbox {in}~\Omega,~~~\bu=0,~~\mbox {on}~\partial\Omega,~t\ge 0.
\end{eqnarray}
Here, $\Omega$ is a bounded domain in $\R^2$ with boundary $\partial \Omega$,
$\mu = 2 \kappa\lambda^{-1}>0$ and the kernel $\beta (t) = \gamma \exp (-\delta t),$
where $\gamma= 2\lambda^{-1}(\nu-\kappa \lambda^{- 1})>0$ and $\delta =\lambda^{-1}>0$.
$\bu$ and $p$ are the velocity field and pressure, respectively. Further, the forcing term $\f$ and the initial velocity $\bu_0$ are given functions in their respective domains of definition. For more details, we refer to \cite{GP11} and references cited therein.

There is a considerable amount of literature devoted to Oldroyd model by Oskolkov, 
Kotsiolis, Karzeeva, Sobolevskii etc, see \cite{AS89,EO92,KrKO91,KOS92,Os89} 
and recently by Lin, He {\it et al.} \cite{HLST,HLSST,WHS10} and by Pani {\it et al.}
\cite{PY05, PYD06}. A brief introduction on the continuous and semi-discrete cases can be
found in \cite{GP11}. In fact, in \cite{GP11}, we have established {\it a priori} estimates
and regularity results for the solution pair $\{\bu,p\}$ of (\ref{om})-(\ref{ibc}) under
realistically assumed initial data and when $\f,\f_t\in L^{\infty}(\R_{+};\bL^2(\Omega))$.

Semi-discrete Galerkin finite element approximations and error analysis for this problem are carried out in \cite{HLSST, PY05, GP11}, while fully discrete cases can be found in \cite{GP12, PYD06, WHS10}. Other works on this problem, that is, (\ref{om})-(\ref{ibc}), can be attributed to He, Wang {\it et al.}, see \cite{WHF11, WHL12a, WSY12}.
But there is no work on two-level or similar methods for this problem, to the best of
author's knowledge and this is the first work in this direction.

Two-level or two-grid methods are well-established and efficient methods for solving non-linear partial differential equations. It involves solving the non-linear problem on a coarse grid (rather than on a fine grid, thereby computationally cost effective), and solving a linearized problem on a fine grid. In other words, in the first step, we discretize the non-linear PDE on a coarse mesh, of mesh-size $H$ and compute an approximate solution, say, $\bu_H$. Then, in the second step, we formulate a linearized problem, out of the original one, using $\bu_H$ and discretize it on a fine mesh, of mesh-size $h,~h<<H$, thereby, compute an approximate solution, say, $\bu^h$. With appropriate $h,H$, we obtain same order of convergence of the error $\bu-\bu^h$, as that of the error obtained by semi-discrete finite element Galerkin approximation on the find grid; but with far less computational cost, since, instead of solving a large non-linear system, we solve a small non-linear system and a large linear system.

In this article, we study the following two-level finite element approximation for the problem (\ref{om})-(\ref{ibc}): First, we compute a semi-discrete Galerkin finite element approximations $(\bu_H,p_H)$, over a coarse mesh of mesh-size $H$. Then, we use the approximation $\bu_H$ to compute a semi-discrete Galerkin finite element approximations $(\bu^h,p^h)$ of the following linearized Oldroyd problem:
\begin{align}\label{second}
\bu_t-\nu\Delta\bu+\bu_H\cdot\nabla\bu_H-\int_0^t \beta(t-s)\Delta\bu(s)~ds+\nabla p =\f,~~\nabla \cdot \bu =0,~~\mbox{in}~\Omega
\end{align}
over a fine mesh of mesh-size $h,~h<<H$.

The above algorithm is nothing new and in fact, this and similar algorithms have been studied 
on numerous occasions for both Stokes and Navier-Stokes problems. But to the best of our 
knowledge, no study has been done for our problem and with non-smooth initial data (even for 
Navier-Stokes), i.e., initial velocity is in $\bH_0^1$ and not in $\bH_0^1\cap\bH^2$ or in 
higher order Sobolev space. Our main aim in this work is to do error analysis of this 
two-level method, under non-smooth initial data.

Two-grid method was first introduced by Xu \cite{Xu94,Xu96} for semi-linear elliptic problems 
and by Layton {\it et al.} \cite{Lay93,LL95,LT98} for steady Navier-Stokes equations. It was 
carried out for time dependent Navier-Stokes by Girault and Lions \cite{GL01} for 
semi-discrete case. The method may vary depending on the linearize problem, to be solved in 
the second step; like, in the case of Navier-Stokes, one can chose a Stokes problem or an 
Oseen problem or a Newton step to solve on the fine mesh. Several works in this direction, 
involving both semi-discrete and fully discrete analysis, can be found in \cite{AGS09, AS08, He04, HL11, HMR04, LH10, LH10a, LHL12, Ol99} and references therein.

We would like to note here that, similar to Navier-Stokes (see \cite{HR82} for a discussion), the regularity of the solution of our problem breaks down as $t\to 0$. To assume otherwise, the initial data have to satisfy certain non-local compatibility conditions that are very difficult to verify and physically irrelevant. We have avoided these conditions in \cite{GP11}, while carrying out semi-discrete error analysis, thereby implying breakdown of regularity of the solution at initial time. Here also, we have worked out under similar circumstances.

Recently, two articles \cite{FGN12, FGN12a}, in the context of Navier-Stokes, have taken this "loss of regularity of the solution at initial time" into account. For example, both these articles assume no more than second-order spatial derivative of the velocity bounded in $\bL^2$, up to initial time $t=0$. But in our case, due to non-smooth initial data, we have $\|\bu(t)\|_2 \sim O(t^{-1/2})$, see \cite[Theorem $3.1$]{GP11}. In other words, no more than 
first-order spatial  derivative of the velocity is bounded in $\bL^2$, up to initial time. 

\noindent In this article, we present the following results:
\begin{align*}
\|\bu_h(t)-\bu^h(t)\|_1 + \|p_h(t)-p^h(t)\| \le Kt^{-1}H^2,
\end{align*}
where $(\bu^h,p^h)$ are the solutions of the two-level method on a grid of mesh-size $h$ and $(\bu_h,p_h)$ are the solution of the standard Galerkin finite element, which satisfy the following: (see \cite{GP11})
\begin{align*}
\|\bu(t)-\bu_h(t)\|_1 + \|p(t)-p_h(t)\| \le Kt^{-1/2}h.
\end{align*}
Therefore, away from $0$, this two-level method produces result similar to standard Galerkin finite element for $H=O(h^{1/2})$. This implies that we obtain similar result at a less computational cost, since this two-level method treats the non-linearity only in a coarse grid rather than a fine grid treatment as in Galerkin finite element.

Apart from the error analysis, we have also looked into the stability of the method, by working out the error analysis. We would like to note here that a proper justification can only be done with the analysis of full discretization and numerical implementation. This will be done in our next effort.

In the following sections, we will assume $C$ and $K$ to be generic positive constants, where $K$ generally depends on the given data, that is, $\bu_0,\f$. And for the sake of convenience, we would write $\bv(t)$ as $\bv$ when there arises no confusion.

The article is organized as follows. In section $2$, we introduce some notations and preliminaries. Section $3$ deals with Galerkin finite element approximations, whereas two-level method is discussed in Section $4$. Error analysis for two-level is carried out in Section $5$.

\section{Preliminaries}
\se

For our subsequent use, we denote by bold face letters the $\R^2$-valued
function space such as
\begin{align*}
 \bH_0^1 = [H_0^1(\Omega)]^2, ~~~ \bL^2 = [L^2(\Omega)]^2.
\end{align*}
Note that $\bH^1_0$ is equipped with a norm
$$ \|\nabla\bv\|= \big(\sum_{i,j=1}^{2}(\partial_j v_i, \partial_j
 v_i)\big)^{1/2}=\big(\sum_{i=1}^{2}(\nabla v_i, \nabla v_i)\big)^{1/2}. $$
Further, we introduce the following divergence free function space:
\begin{align*}
\bJ_1 &= \{\bphi\in\bH_0^1 : \nabla \cdot \bphi = 0\}.
\end{align*}
For any Banach space $X$, let $L^p(0, T; X)$ denote the space of measurable $X$
-valued functions $\bphi$ on  $ (0,T) $ such that
$$ \int_0^T \|\bphi (t)\|^p_X~dt <\infty~~~\mbox {if}~~1 \le p < \infty, $$
and for $p=\infty$
$$ {\displaystyle{ess \sup_{0<t<T}}} \|\bphi (t)\|_X <\infty~~~\mbox {if}~~p=\infty. $$
Through out this paper, we make the following assumptions:\\
(${\bf A1}$). For ${\bf g} \in \bL^2$, let the unique pair of solutions $\{\bv
\in\bJ_1, q \in L^2 /R\} $ for the steady state Stokes problem
\begin{align*}
 -\Delta\bv + \nabla q = {\bf g}, ~~
 \nabla \cdot\bv = 0~~~\mbox {in}~~~\Omega;~~~~\bv|_{\partial\Omega}=0,
\end{align*}
satisfy the following regularity result
$$  \| \bv \|_2 + \|q\|_{H^1/R} \le C\|{\bf g}\|. $$
\noindent
(${\bf A2}$). The initial velocity $\bu_0$ and the external force $\f$ satisfy for some
positive constant $M_0,$ and for $T$ with $0<T \leq \infty$
$$ \bu_0\in\bJ_1,~\f,\f_t,\f_{tt} \in L^{\infty} (0, T ;\bL^2)~~~\mbox{with} ~~~
\|\bu_0\|_1 \le M_0,~~{\displaystyle{\sup_{0<t<T} }}\big\{\|\f\|,\|\f_t\|,
 \|\f_{tt}\|\big\} \le M_0. $$

\noindent Before going into the details, let us introduce the weak formulation of
(\ref{om})-(\ref{ibc}). Find a pair of functions $\{\bu(t), p(t)\},~t>0,$ such that
\begin{align}\label{wfh}
(\bu_t, \bphi) +\mu (\nabla \bu, \nabla \bphi)+(\bu\cdot\nabla \bu, \bphi)
&+\int_0^t \beta(t-s) (\nabla \bu(s),\nabla \bphi)~ds
= ( p, \nabla \cdot \bphi) + (\f,\bphi)~~\forall\bphi\in\bH_0^1, \nonumber \\
(\nabla \cdot \bu, \chi) =& 0~~~\forall \chi \in L^2. 
\end{align}
Equivalently, find  $\bu(t) \in {\bf J}_1,~t>0 $ such that
\begin{equation}\label{wfj}
 (\bu_t, \bphi) +\mu (\nabla \bu, \nabla \bphi )+( \bu \cdot \nabla \bu, \bphi)
 +\int_0^t \beta(t-s) (\nabla\bu(s),\nabla \bphi)ds=(\f,\bphi),~\forall\bphi
 \in {\bf J}_1.
\end{equation}

For existence and uniqueness, and the regularity of the solution of the problem
(\ref{wfh}) or (\ref{wfj}), we refer to \cite{GP11}. In fact, in our subsequent section, we will use various estimates of $\bu$ and $p$, without explicit mention of them. These estimates can be found in Lemma $3.1$-Theorem $3.2$, \cite{GP11}. \\
We present below the positivity of the kernel $\beta$, which will be used in our subsequent analysis. The result is borrowed from \cite[Lemma 2.1]{GP11}.
\begin{lemma}\label{pp} 
For arbitrary $\alpha>0$, $t^*>0$ and $\phi\in L^2(0,t^*)$, the following positive
definite property holds
$$ \int_0^{t^*}{\left(\int_0^t{\exp{[-\alpha(t- s)]}\phi(s)}\,ds
   \right)\phi(t)}\,dt\ge 0. $$
\end{lemma}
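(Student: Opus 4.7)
The plan is to recast the double integral as a quadratic expression in an auxiliary function that solves a simple first-order ODE, after which the sign becomes transparent. I would introduce
\[
\psi(t) := \int_0^t \exp[-\alpha(t-s)]\phi(s)\,ds,
\]
so that the quantity to be estimated is $\int_0^{t^*}\psi(t)\phi(t)\,dt$. Writing $\psi(t) = e^{-\alpha t}\int_0^t e^{\alpha s}\phi(s)\,ds$ exhibits $\psi$ as the product of a smooth function and an absolutely continuous one (since $\phi\in L^2(0,t^*)\subset L^1(0,t^*)$), hence $\psi$ is absolutely continuous on $[0,t^*]$ with $\psi(0)=0$ and satisfies the a.e.\ identity
\[
\psi'(t) + \alpha\psi(t) = \phi(t).
\]

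Substituting $\phi = \psi' + \alpha\psi$ into the target integral and applying the fundamental theorem of calculus (legitimate because $\psi^2$ is absolutely continuous), I obtain
\[
\int_0^{t^*}\psi(t)\phi(t)\,dt = \int_0^{t^*}\psi(t)\psi'(t)\,dt + \alpha\int_0^{t^*}\psi(t)^2\,dt = \tfrac{1}{2}\psi(t^*)^2 + \alpha\int_0^{t^*}\psi(t)^2\,dt,
\]
which is a sum of two non-negative terms, establishing the claim.

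The only subtle point is justifying the a.e.\ differentiation of $\psi$ and the integration-by-parts identity $\int_0^{t^*}\psi\psi'\,dt = \tfrac12\psi(t^*)^2$; this is handled by the explicit factorization above and is routine once one does not assume any smoothness of $\phi$ beyond $L^2$. No Fourier-analytic or spectral machinery is required, and the argument is the standard one for Volterra kernels of exponential type. This same algebraic identity is precisely what makes the lemma useful later: when $\phi$ is taken to be $\nabla\bu$ (or a derivative thereof), the resulting non-negative term can be discarded on the right-hand side of energy estimates for (\ref{om})--(\ref{ibc}).
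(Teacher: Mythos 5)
Your proof is correct. The paper itself gives no argument for this lemma --- it simply borrows the statement from \cite[Lemma 2.1]{GP11} --- so there is nothing to compare against line by line; but your reduction is the standard and cleanest route for the exponential Volterra kernel. The key points all check out: $\psi(t)=e^{-\alpha t}\int_0^t e^{\alpha s}\phi(s)\,ds$ is absolutely continuous with $\psi(0)=0$ and $\psi'+\alpha\psi=\phi$ a.e.; moreover $\psi'=\phi-\alpha\psi\in L^2(0,t^*)$ since $\psi$ is bounded, so $\psi\in H^1(0,t^*)$ and the identity $\int_0^{t^*}\psi\psi'\,dt=\tfrac12\psi(t^*)^2$ is legitimate, yielding $\int_0^{t^*}\psi\phi\,dt=\tfrac12\psi(t^*)^2+\alpha\int_0^{t^*}\psi^2\,dt\ge 0$. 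This is exactly the elementary argument one expects for kernels of the form $\gamma e^{-\delta t}$, and it is a perfectly acceptable self-contained replacement for the citation.
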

\begin{lemma} [Gronwall's Lemma]
 Let $g,h,y$ be three locally integrable non-negative functions  on the time
 interval $[0,\infty)$ such that for all $t\ge 0$
 $$  y(t)+G(t)\le C+\int_0^t h(s)~ds+\int_0^t g(s)y(s)~ds, $$
 where $G(t)$ is a non-negative function on $[0,\infty)$ and  $C\ge 0$ is a
 constant. Then,
 $$ y(t)+G(t)\le{\Big(}C+\int_0^t h(s)~ds{\Big)}exp{\Big(}\int_0^t g(s)~ds{\Big)}. $$
\end{lemma}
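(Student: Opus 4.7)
The plan is to reduce the integral inequality to a simple first-order ODE inequality on an auxiliary majorant, then solve by an integrating factor.

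Concretely, I would introduce
\begin{align*}
F(t) := C+\int_0^t h(s)\,ds+\int_0^t g(s)\,y(s)\,ds.
\end{align*}
The hypothesis says $y(t)+G(t)\le F(t)$, and since $G\ge 0$ we also get the pointwise bound $y(t)\le F(t)$. Because $h$, $g$, and $y$ are locally integrable and non-negative, $F$ is absolutely continuous and non-decreasing, with $F(0)=C$ and $F'(t)=h(t)+g(t)y(t)$ for a.e.\ $t\ge 0$. Substituting $y(t)\le F(t)$ yields the a.e.\ differential inequality
\begin{align*}
F'(t)\le h(t)+g(t)\,F(t).
\end{align*}

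Next I would multiply by the integrating factor $\mu(t):=\exp\bigl(-\int_0^t g(s)\,ds\bigr)$, which is absolutely continuous and positive, to obtain $\bigl(\mu(t)F(t)\bigr)'\le \mu(t)h(t)$ a.e. Integrating from $0$ to $t$ and using $F(0)=C$, $\mu(0)=1$,
\begin{align*}
\mu(t)F(t)\le C+\int_0^t \mu(s)h(s)\,ds\le C+\int_0^t h(s)\,ds,
\end{align*}
where in the last step I use $\mu(s)\le 1$ because $g\ge 0$. Multiplying through by $\mu(t)^{-1}=\exp\bigl(\int_0^t g(s)\,ds\bigr)$ gives
\begin{align*}
F(t)\le\Bigl(C+\int_0^t h(s)\,ds\Bigr)\exp\Bigl(\int_0^t g(s)\,ds\Bigr),
\end{align*}
and combining with $y(t)+G(t)\le F(t)$ yields the claimed bound.

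The only delicate point is the passage through the integrating factor when $g,h,y$ are merely locally integrable (not continuous). This is handled by noting that $F$ is absolutely continuous, so the product rule $(\mu F)'=\mu' F+\mu F'$ holds a.e., and the fundamental theorem of calculus applies to both sides of the ODE inequality. Everything else is a routine one-line manipulation; there is no genuine obstacle, which is why this lemma is stated rather than proved in the paper.
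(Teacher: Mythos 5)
Your proof is correct: the reduction to the differential inequality for the absolutely continuous majorant $F$, the integrating factor $\exp\bigl(-\int_0^t g(s)\,ds\bigr)$, and the use of $\mu(s)\le 1$ to absorb the factor into $\int_0^t h(s)\,ds$ all go through, and you correctly address the only delicate point (the a.e.\ product rule for absolutely continuous functions with merely locally integrable data). The paper states this lemma without proof, as a standard integral form of Gronwall's inequality, and your argument is exactly the canonical one, so there is nothing to compare against.
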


\section{Galerkin Finite Element Method}
\se

From now on, we denote $h$, with $0<h<1$, to be a real positive spatial discretization
parameter, tending to zero. Let  $\bH_h$ and $L_h$ be two family of finite element spaces,
finite dimensional subspaces of $\bH_0^1 $ and $L^2/\R$, respectively,
approximating velocity vector and the pressure. Assume that the following
approximation properties are satisfied for the spaces $\bH_h$ and $L_h$: \\
${\bf (B1)}$ For each $\bw \in\bH_0^1 \cap \bH^2 $ and $ q \in
H^1/\R$ there exist approximations $i_h w \in \bH_h $ and $ j_h q \in
L_h $ such that
$$
 \|\bw-i_h\bw\|+ h \| \nabla (\bw-i_h \bw)\| \le Ch^2 \| \bw\|_2,
 ~~~~\| q - j_h q \| \le Ch \| q\|_1.
$$
Further, suppose that the following inverse hypothesis holds for $\bw_h\in\bH_h$:
\begin{align}\label{inv.hypo}
 \|\nabla \bw_h\| \leq  Ch^{-1} \|\bw_h\|.
\end{align}
For $\bv, \bw, \bphi \in \bH_0^1$, set
$$ a(\bv, \bphi) = (\nabla \bv, \nabla \bphi),~~~
b(\bv, \bw,\bphi)= \frac{1}{2} (\bv \cdot \nabla \bw , \bphi)
   - \frac{1}{2} (\bv \cdot \nabla \bphi, \bw). $$
Note that the operator $b(\cdot, \cdot, \cdot)$ preserves the antisymmetric property of
the original nonlinear term, that is,
$$ b(\bv_h, \bw_h, \bw_h) = 0 \;\;\; \forall \bv_h, \bw_h \in {\bH}_h. $$
The discrete analogue of the weak formulation (\ref{wfh}) now reads as: Find $\bu_h(t)
\in \bH_h$ and $p_h(t) \in L_h$ such that $ \bu_h(0)= \bu_{0h} $ and for $t>0$
\begin{eqnarray}\label{dwfh}
(\bu_{ht}, \bphi_h) +\mu a (\bu_h,\bphi_h) &+& b(\bu_h,\bu_h,\bphi_h)+ \int_0^t \beta(t-s) a(\bu_h(s),\bphi_h)~ds -(p_h, \nabla \cdot \bphi_h) =(\f, \bphi_h), \nonumber \\
&&(\nabla \cdot \bu_h, \chi_h) =0,
\end{eqnarray}
for $\bphi_h\in\bH_h,~\chi_h \in L_h$. Here $\bu_{0h} \in\bH_h $ is a suitable 
approximation of $\bu_0\in\bJ_1$. Or we can define
$$ {\bf J}_h = \{ v_h \in \bH_h : (\chi_h,\nabla\cdot v_h)=0
 ~~~\forall \chi_h \in L_h \}, $$
for an equivalent weak formulation: Find $\bu_h(t)\in {\bf J}_h $ such that $\bu_h(0) =
\bu_{0h} $ and for $t>0$
\begin{equation}\label{dwfj}
~~~~ (\bu_{ht},\bphi_h) +\mu a (\bu_h,\bphi_h)+ \int_0^t \beta(t-s) a(\bu_h(s), \bphi_h)~ds
= -b( \bu_h, \bu_h, \bphi_h)+(\f,\bphi_h)~~\forall \bphi_h \in \bJ_h.
\end{equation}
For the well-posedness of (\ref{dwfh}) or (\ref{dwfj}), we refer to \cite{GP11, PY05}.
Note that the pressure $p_h (t) \in L_h$ is unique up to a constant and unique in a
quotient space $L_h/N_h$, where
$$ N_h=\{q_h\in L_h: (q_h,\nabla\cdot\bphi_h)=0 \mbox{ for } \bphi_h\in\bH_h \}. $$
The norm of $L_h/N_h$ is given by
$$ \|q_h\|_{L^2/N_h}= \inf_{\chi_h\in N_h} \|q_h+\chi_h\|. $$

\noindent For continuous dependence of the discrete pressure $p_h (t) \in L_h/N_h$ on the
discrete velocity $u_h(t) \in {\bf J}_h$, we assume the following discrete
inf-sup (LBB) condition: \\
\noindent
${\bf (B2')}$  For every $q_h \in L_h$, there exists a non-trivial function
$\bphi_h \in \bH_h$
such that
$$ |(q_h, \nabla\cdot \bphi_h)| \ge C \|\nabla \bphi_h \|\| q_h\|_{L^2/N_h}, $$
where  the constant $C>0$ is independent of $h$. \\
Moreover, we also assume that the following approximation property holds true
for ${\bf J}_h $. \\
\noindent
${\bf (B2)}$ For every $\bw \in {\bf J}_1 \cap \bH^2, $ there exists an
approximation $r_h \bw \in {\bf J_h}$ such that
$$ \|\bw-r_h\bw\|+h \| \nabla (\bw - r_h \bw) \| \le Ch^2 \|\bw\|_2 . $$
The $L^2$ projection $P_h:\bL^2\mapsto \bJ_h$ satisfies the following properties
(see \cite{HR82}): for $\bphi\in \bJ_h$,
\begin{equation}\label{ph1}
 \|\bphi- P_h \bphi\|+ h \|\nabla P_h \bphi\| \leq C h\|\nabla \bphi\|,
\end{equation}
and for $\bphi \in \bJ_1 \cap \bH^2,$
\begin{equation}\label{ph2}
 \|\bphi-P_h\bphi\|+h\|\nabla(\bphi-P_h \bphi)\|\le C h^2\|\td\bphi\|.
\end{equation}
We now define the discrete  operator $\Delta_h: \bH_h \mapsto \bH_h$ through the
bilinear form $a (\cdot, \cdot)$ as
\begin{eqnarray}\label{do}
 a(\bv_h, \bphi_h) = (-\Delta_h\bv_h, \bphi)~~~~\forall \bv_h, \bphi_h\in\bH_h.
\end{eqnarray}
Define $\td_h = P_h(-\Delta_h) $. The restriction of $\td_h$ to $\bJ_h$ is
invertible and we denote the inverse by $\td_h^{-1}$. Since $-\td_h$ is self-adjoint and 
positive definite, we define {\it discrete Sobolev norms} on $\bJ_h$ as follows:
$$ \|\bv_h\|_r = \|(-\td_h)^{r/2}\bv_h\|,~~\bv_h\in\bJ_h,~r\in\R. $$
We note that, in particular, $\|\bv_h\|_0=\|\bv_h\|$ and $\|\bv_h\|_1=\|\nabla\bv_h\|$ for $\bv_h\in\bJ_h$, and $\|\cdot\|_2$ and $\|\td_h\cdot\|$ are equivalent on $\bJ_h$. For further detail, we refer to \cite{HR82, HR90}.

\noindent Below, we present some estimates of the non-linear term (see ($3.7$) from \cite{HR90}).
\begin{lemma}\label{nonlin}
Suppose conditions (${\bf A1}$), (${\bf B1}$) and (${\bf  B2}$) are satisfied. Then there 
exists a positive constant $K$ such that for $\bv,\bw,\bphi\in\bH_h$, the following holds:
\begin{equation}\label{nonlin1}
 |(\bv\cdot\nabla\bw,\bphi)| \le C \left\{
\begin{array}{l}
 \|\bv\|^{1/2}\|\nabla\bv\|^{1/2}\|\nabla\bw\|^{1/2}\|\Delta_h\bw\|^{1/2}
 \|\bphi\|, \\
 \|\bv\|^{1/2}\|\Delta_h\bv\|^{1/2}\|\nabla\bw\|\|\bphi\|, \\
 \|\bv\|^{1/2}\|\nabla\bv\|^{1/2}\|\nabla\bw\|\|\bphi\|^{1/2}
 \|\nabla\bphi\|^{1/2}, \\
 \|\bv\|\|\nabla\bw\|\|\bphi\|^{1/2}\|\Delta_h\bphi\|^{1/2}, \\
 \|\bv\|\|\nabla\bw\|^{1/2}\|\Delta_h\bw\|^{1/2}\|\bphi\|^{1/2}
 \|\nabla\bphi\|^{1/2}
\end{array}\right.
\end{equation}
\end{lemma}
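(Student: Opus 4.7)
The plan is to establish each of the five bounds in (\ref{nonlin1}) by combining H\"older's inequality on the trilinear form $(\bv\cdot\nabla\bw,\bphi)$ with two-dimensional Gagliardo--Nirenberg interpolation inequalities and their discrete analogues on $\bH_h$. The two key continuous interpolation facts in two dimensions are
\[
\|\psi\|_{L^4} \leq C\|\psi\|^{1/2}\|\nabla\psi\|^{1/2}, \qquad \|\psi\|_{L^\infty} \leq C\|\psi\|^{1/2}\|\Delta\psi\|^{1/2},
\]
and the two discrete analogues that replace the second inequality inside $\bH_h$ are
\[
\|\bw_h\|_{L^\infty} \leq C\|\bw_h\|^{1/2}\|\Delta_h\bw_h\|^{1/2}, \qquad \|\nabla\bw_h\|_{L^4} \leq C\|\nabla\bw_h\|^{1/2}\|\Delta_h\bw_h\|^{1/2},
\]
valid for all $\bw_h\in\bH_h$.

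Granting these four ingredients, each of the five bounds reduces to a one-line H\"older exponent allocation on $(\bv\cdot\nabla\bw,\bphi)$. I would treat the cases as follows. The first bound uses exponents $(4,4,2)$, placing $\bv$ in $L^4$ via the continuous interpolation and $\nabla\bw$ in $L^4$ via the discrete gradient interpolation. The second uses $(\infty,2,2)$ with $\bv$ in $L^\infty$ via the discrete $L^\infty$ interpolation. The third uses $(4,2,4)$ with both $\bv$ and $\bphi$ interpolated in $L^4$. The fourth uses $(2,2,\infty)$ with $\bphi$ in $L^\infty$ via the discrete $L^\infty$ interpolation. The fifth uses $(2,4,4)$, interpolating $\nabla\bw$ in $L^4$ and $\bphi$ in $L^4$. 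In every case the resulting product of norms matches the one claimed in the lemma.

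The principal obstacle is verifying the two discrete interpolation estimates above, since the continuous versions do not apply directly to arbitrary finite element functions. The standard route, carried out in \cite{HR90}, is to associate to $\bw_h\in\bH_h$ the continuous function $\bw\in\bH_0^1\cap\bH^2$ solving $-\Delta\bw=-\Delta_h\bw_h$ in $\Omega$ with $\bw=0$ on $\partial\Omega$; by the defining identity (\ref{do}), $\bw_h$ is then the Ritz projection of $\bw$ onto $\bH_h$ with respect to $a(\cdot,\cdot)$, and the regularity hypothesis (${\bf A1}$) yields $\|\bw\|_2\leq C\|\Delta_h\bw_h\|$. The continuous Gagliardo--Nirenberg inequalities control $\|\bw\|_{L^\infty}$ and $\|\nabla\bw\|_{L^4}$, while the approximation property (${\bf B1}$) combined with the inverse hypothesis (\ref{inv.hypo}) absorbs the error $\bw-\bw_h$ at the required scale. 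Once these two discrete interpolation inequalities are in hand, the five H\"older allocations described above yield (\ref{nonlin1}) immediately.
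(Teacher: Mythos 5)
The paper offers no proof of this lemma, simply citing (3.7) of \cite{HR90}, and your argument is correct: it is exactly the standard derivation behind that citation, namely H\"older with the exponent allocations you list, the 2D Ladyzhenskaya inequality for the $L^4$ factors, and the discrete Agmon/gradient-interpolation inequalities obtained by comparing $\bw_h$ with the solution of $-\Delta\bw=-\Delta_h\bw_h$ via (${\bf A1}$), (${\bf B1}$) and (\ref{inv.hypo}). All five exponent triples sum correctly and reproduce the stated right-hand sides, so nothing is missing.
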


\noindent Examples of subspaces $\bH_h$ and $L_h$ satisfying assumptions (${\bf B1}$) and (${\bf B2}'$) are abundant in literature (see \cite{GP11} and the references cited there).
We present below a lemma dealing with {\it a priori} and regularity estimates of $\bu_h$.
\begin{lemma}\label{est.uh}
Let the assumptions of Lemma \ref{nonlin} hold. Additionally, let $0<\alpha< \min \{\mu\lambda_1/2,\delta\}$
The semi-discrete Galerkin approximation $\bu_h$ of the velocity $\bu$ satisfies, for $t>0,$
\begin{eqnarray}
\|\bu_h(t)\|^2+e^{-2\alpha t}\int_0^t e^{2\alpha s}\|\bu_h(s)\|_1^2~ds \le K, \label{uh01} \\
\|\bu_h(t)\|_1^2+e^{-2\alpha t}\int_0^t e^{2\alpha s}\big\{\|\bu_h(s)\|_2^2+\|\bu_{h,s}(s)\|^2 \big\}~ds \le K, \label{uh02} \\
(\tau^*(t))^{1/2}\big\{\|\bu_h(t)\|_2+\|p_h(t)\|_{H^1/\R}\} \le K, \label{uh03}
\end{eqnarray}
where $\tau^*(t)= \min \{1,t\}$ and $K$ depends only on the given data. In fact, $K$ is independent of $h$.
\end{lemma}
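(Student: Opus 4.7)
The plan is to prove the three bounds sequentially, using energy estimates in the discrete framework \eqref{dwfj}, the positivity lemma \ref{pp} for the memory term, the nonlinear estimates of Lemma \ref{nonlin}, and Gronwall's lemma. For \eqref{uh01}, I would test \eqref{dwfj} with $\bphi_h=\bu_h$, use $b(\bu_h,\bu_h,\bu_h)=0$ to eliminate the nonlinearity, multiply through by $e^{2\alpha t}$, and integrate on $[0,t]$. Pulling $e^{-(\delta-\alpha)(s-\tau)}$ out of the double integral (where $\alpha<\delta$ is crucial) renders the memory contribution non-negative by Lemma \ref{pp}, so it can be discarded. The choice $\alpha<\mu\lambda_1/2$ together with Poincar\'e provides the required coercivity, and the forcing is handled by Young's inequality.

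For \eqref{uh02}, testing with $\bphi_h=-\td_h\bu_h\in\bJ_h$ yields $\tfrac12\tfrac{d}{dt}\|\bu_h\|_1^2+\mu\|\bu_h\|_2^2$ plus a positive memory contribution treated as before. The nonlinear term is bounded by the second estimate in \eqref{nonlin1} as $C\|\bu_h\|^{1/2}\|\bu_h\|_1\|\bu_h\|_2^{3/2}$; Young's inequality absorbs half the dissipation and leaves $C(\|\bu_h\|^2\|\bu_h\|_1^2)\,\|\bu_h\|_1^2$. Since $g(s):=\|\bu_h(s)\|^2\|\bu_h(s)\|_1^2$ is in $L^1(0,\infty)$ by \eqref{uh01}, Gronwall applied to $y(t)=e^{2\alpha t}\|\bu_h(t)\|_1^2$ gives the bound. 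The $\int e^{2\alpha s}\|\bu_{h,s}\|^2 ds$ part is then obtained by a separate test with $e^{2\alpha t}\bu_{h,t}$, the nonlinear term now being controlled through the third estimate in \eqref{nonlin1} using the $\bL^{\infty}(\bH^1)$ bound just derived.

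The delicate estimate is \eqref{uh03}: the singular weight $(\tau^*(t))^{1/2}$ reflects the loss of regularity at $t=0$ caused by $\bu_0\in\bJ_1\setminus\bH^2$. I would first establish $\tau^*(t)\|\bu_{h,t}(t)\|^2\le K$ by differentiating \eqref{dwfj} in time---which generates a boundary term $\beta(0)a(\bu_h,\bphi_h)$ plus a $\beta'$-memory integral---and testing with $\tau^*(t)e^{2\alpha t}\bu_{h,t}$. The antisymmetry $b(\bu_h,\bu_{h,t},\bu_{h,t})=0$ kills one cross-term, and the other, $b(\bu_{h,t},\bu_h,\bu_{h,t})$, is controlled via Lemma \ref{nonlin} at the cost of absorbing a fraction of $\mu\tau^*\|\nabla\bu_{h,t}\|^2$. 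The extra contribution $(\tau^*)'\|\bu_{h,t}\|^2$ arising from $\tfrac{d}{dt}(\tau^*\|\bu_{h,t}\|^2)$ integrates to $\int_0^1\|\bu_{h,s}\|^2 ds$, controlled by \eqref{uh02}, so that no bound on $\bu_{h,t}(0)$ is required---precisely the point of the weight. Then $\|\bu_h\|_2$ is read off the identity $\mu(-\td_h)\bu_h=-P_h(\bu_h\cdot\nabla\bu_h)-\int_0^t\beta(t-s)(-\td_h)\bu_h(s)\,ds+P_h\f-\bu_{h,t}$, the convolution being controlled by Cauchy--Schwarz in time and \eqref{uh02}. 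Finally, the pressure bound follows from the discrete inf--sup condition ${\bf (B2')}$ applied to \eqref{dwfh}: rewriting $(p_h,\nabla\cdot\bphi_h)$ in terms of the remaining terms of \eqref{dwfh} and estimating each via the norms already controlled.

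The main obstacle is the time-differentiated identity in the last step. The kernel does not vanish at zero, so $\beta(0)a(\bu_h,\bphi_h)$ produces a residual $\bH^1$-piece and the $\beta'$-memory integral must be dominated via the already obtained $\bL^2(\bH^2)$ bound; moreover the nonlinear cross-term $b(\bu_{h,t},\bu_h,\bu_{h,t})$ produces a $\|\nabla\bu_{h,t}\|$-factor that has to be balanced against the dissipation, and the weight $\tau^*$ must be strong enough to dominate the singularity of $\bu_{h,t}$ at $t=0$ while still delivering the advertised rate $(\tau^*)^{-1/2}$ in \eqref{uh03}.
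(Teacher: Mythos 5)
Your overall strategy is the one the paper actually relies on: the paper offers no proof of this lemma beyond citing Lemma 4.2 of \cite{GP11}, and the method there (and in the paper's own analogous Lemma \ref{est.2uh} for $\bu^h$) is exactly your sequence of energy tests with $e^{2\alpha t}\bu_h$, then $e^{2\alpha t}\td_h\bu_h$, then the time-differentiated equation tested with $\tau^*(t)e^{2\alpha t}\bu_{h,t}$, combined with Lemma \ref{pp} to discard the memory contribution after pulling out $e^{-(\delta-\alpha)(t-s)}$, and the condition $\alpha<\min\{\mu\lambda_1/2,\delta\}$ for coercivity after Poincar\'e. Your treatment of \eqref{uh03} --- the weight $\tau^*$ removing any need for a bound on $\bu_{h,t}(0)$, the elliptic bootstrap for $\|\bu_h\|_2$ from the discrete equation, and the inf--sup recovery of the pressure --- is also the intended route.

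The one step that fails as written is the Gronwall argument for \eqref{uh02}. You claim $g(s):=\|\bu_h(s)\|^2\|\bu_h(s)\|_1^2\in L^1(0,\infty)$ ``by \eqref{uh01}''. It is not: \eqref{uh01} controls only the exponentially weighted integral $e^{-2\alpha t}\int_0^t e^{2\alpha s}\|\bu_h(s)\|_1^2\,ds$, which is perfectly compatible with $\int_0^t\|\bu_h(s)\|_1^2\,ds$ growing linearly in $t$ (the unweighted energy identity with $\f\in L^\infty(\bL^2)$ gives precisely $\int_0^t\|\bu_h\|_1^2\,ds\le C(1+t)$). Feeding this into the classical Gronwall lemma of Section 2 produces a factor $\exp\big(\int_0^t g\big)\sim e^{Kt}$, i.e.\ a constant that degenerates as $t\to\infty$, whereas the lemma asserts a $K$ depending only on the data (and the companion statements, e.g.\ Lemma \ref{est1.uh}, explicitly claim independence of $t$). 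The repair is standard but is a genuinely different ingredient from the one you invoke: \eqref{uh01} does give the locally uniform bound $\int_t^{t+1}\|\bu_h(s)\|_1^2\,ds\le e^{2\alpha}K$ for all $t\ge 0$, and the \emph{uniform} Gronwall lemma applied on unit windows then yields a $t$-independent bound on $\|\bu_h(t)\|_1$. The same device is needed again when you apply Gronwall to the $\tau^*(t)e^{2\alpha t}\|\bu_{h,t}\|^2$ inequality in the last step, where the coefficient $C\|\bu_h\|_1^2$ is bounded but not integrable on $(0,\infty)$. If one is content with constants of the form $Ke^{Kt}$ on a finite time interval, your argument goes through unchanged.
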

\begin{remark}
Here $\lambda_1>0$ is the least eigenvalue of the Stokes operator (see \cite{GP11}).
\end{remark}

\begin{proof}
The estimates can be found in Lemma 4.2 of \cite{GP11}. The additional estimates of $\bu_{h,t}$ and of $p$ can be derived as in the proof of Theorem $3.1$ of \cite{GP11}.
\end{proof}
\begin{lemma}\label{est1.uh}
Under the assumptions of Lemma \ref{est.uh}, the semi-discrete Galerkin approximation $\bu_h$ of the velocity $\bu$ satisfies, for $t>0$ and for $r\in \{0,1\},~i\in \{1,2\},~r+i \le 2$,
\begin{equation}\label{1uh01}
(\tau^*(t))^{r+2i-1}\|D_t^i\bu_h(t)\|_r^2+e^{-2\alpha t}\int_0^t (\tau^*(s))^{r+2i-1}
e^{2\alpha s} \|D_s^i\bu_h(s)\|_{r+1}^2 ~ds \le K,
\end{equation}
where $D_t^i=\frac{\partial^i}{\partial t^i}$. And
\begin{eqnarray}
e^{-2\alpha t}\int_0^t (\tau^*(s))^2 e^{2\alpha s}\{\|\bu_{h,ss}(s)\|^2 +\|p_{h,s}(s)\|_{H^1/\R}^2\}~ds \le K, \label{1uh02} \\
(\tau^*(t))^{3/2}\{\|\bu_{h,t}\|_2+\|p_{h,t}\|_{H^1/\R}\} \le K. \label{1uh03}
\end{eqnarray}
Here $K$ depends only on the given data. In particular, $K$ is independent of $h$ and $t$.
\end{lemma}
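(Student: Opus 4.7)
The plan is to iterate on the order $i$ of time differentiation, running a time-weighted energy argument on the $i$-times differentiated form of (\ref{dwfj}). The weight $(\tau^*(t))^{r+2i-1}e^{2\alpha t}$ is engineered so that its $\tau^*$-power vanishes at $t=0$ to exactly the order needed to absorb the $t^{-(r+2i-1)/2}$ blow-up of $\|D_t^i\bu_h\|_r$ inherent to non-smooth initial data, while the exponential yields uniform-in-time dissipation on $[0,\infty)$.

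For $(i,r)=(1,0)$, I would differentiate (\ref{dwfj}) once in $t$, using $\beta'(t)=-\delta\beta(t)$ and $\beta(0)=\gamma$, to obtain
$$ (\bu_{h,tt},\bphi_h)+\mu a(\bu_{h,t},\bphi_h)+\gamma a(\bu_h,\bphi_h)-\delta\int_0^t\beta(t-s)a(\bu_h(s),\bphi_h)\,ds+b(\bu_{h,t},\bu_h,\bphi_h)+b(\bu_h,\bu_{h,t},\bphi_h)=(\f_t,\bphi_h). $$
Testing with $\bphi_h=\bu_{h,t}$, multiplying by $\tau^*(s)e^{2\alpha s}$, and integrating on $(0,t)$, integration by parts on the leading term produces $\tfrac12\tau^*(t)e^{2\alpha t}\|\bu_{h,t}(t)\|^2+\mu\int_0^t\tau^*(s)e^{2\alpha s}\|\nabla\bu_{h,s}\|^2\,ds$, modulo a residual $\int_0^t(\tau^*)'e^{2\alpha s}\|\bu_{h,s}\|^2\,ds$ already bounded by (\ref{uh02}). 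The nonlinearities are handled via the third inequality of Lemma \ref{nonlin}: $|b(\bu_{h,s},\bu_h,\bu_{h,s})|+|b(\bu_h,\bu_{h,s},\bu_{h,s})|\le\varepsilon\|\nabla\bu_{h,s}\|^2+C\|\nabla\bu_h\|^2\|\bu_{h,s}\|^2$, with $\|\nabla\bu_h\|$ uniformly bounded by (\ref{uh02}). The memory term is split: $\gamma a(\bu_h,\bu_{h,s})=\tfrac{\gamma}{2}\tfrac{d}{ds}\|\nabla\bu_h\|^2$ integrates out cleanly, while the convolution piece is controlled through Lemma \ref{pp} after the rewriting $\beta(s-\sigma)e^{\alpha s}=\gamma e^{(\alpha-\delta)(s-\sigma)}e^{\alpha\sigma}$, legitimate since $\alpha<\delta$. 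Gronwall closes the bound.

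The cases $(1,1)$ and $(2,0)$ follow the same template. For $(1,1)$, I would test the once-differentiated equation with $\bphi_h=(-\td_h)\bu_{h,t}\in\bJ_h$, so the viscous pairing reads $\mu a(\bu_{h,t},(-\td_h)\bu_{h,t})=\mu\|\bu_{h,t}\|_2^2$, and weight by $(\tau^*(s))^2 e^{2\alpha s}$; the nonlinear terms now call for the fourth/fifth lines of Lemma \ref{nonlin}, invoking (\ref{uh03}) to bound $\|\bu_h\|_2$. For $(2,0)$, differentiate (\ref{dwfj}) a second time in $t$, test with $\bphi_h=\bu_{h,tt}$, and weight by $(\tau^*(s))^3 e^{2\alpha s}$; the second time derivative of $b(\bu_h,\bu_h,\cdot)$ produces $b(\bu_{h,ss},\bu_h,\cdot)+2b(\bu_{h,s},\bu_{h,s},\cdot)+b(\bu_h,\bu_{h,ss},\cdot)$, each controlled by Lemma \ref{nonlin} using the just-established $(1,0)$ and $(1,1)$ estimates together with $\f_{tt}\in L^\infty(0,T;\bL^2)$ from $({\bf A2})$. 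The bound (\ref{1uh02}) on $\bu_{h,ss}$ drops out of the same energy identity.

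For the pressure estimates, differentiate (\ref{dwfh}) in $t$ to obtain, for $\bphi_h\in\bH_h$,
$$ (p_{h,t},\nabla\cdot\bphi_h)=(\bu_{h,tt},\bphi_h)+\mu a(\bu_{h,t},\bphi_h)+\gamma a(\bu_h,\bphi_h)-\delta\int_0^t\beta(t-s)a(\bu_h(s),\bphi_h)\,ds+b(\bu_{h,t},\bu_h,\bphi_h)+b(\bu_h,\bu_{h,t},\bphi_h)-(\f_t,\bphi_h), $$
and apply $({\bf B2'})$ to read off $\|p_{h,t}\|_{H^1/\R}\le C(\|\bu_{h,tt}\|+\|\nabla\bu_{h,t}\|+\|\nabla\bu_h\|+\|\bu_h\|_1\|\bu_{h,t}\|_1+\|\f_t\|)$ plus a memory contribution bounded by (\ref{uh02}); multiplying by $(\tau^*)^{3/2}$ and inserting the velocity bounds gives (\ref{1uh03}), while squaring and integrating with $(\tau^*)^2 e^{2\alpha s}$ gives (\ref{1uh02}). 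The principal technical obstacle throughout is the interplay between the convolution kernel and the non-constant weight: to invoke Lemma \ref{pp} one must push the $e^{\alpha s}$ inside $\beta$ and carefully absorb the residuals produced by $(\tau^*)'$, which forces the argument to cascade strictly from lower $(i,r)$ to higher and prevents any shortcut that would bypass the previously established estimates.
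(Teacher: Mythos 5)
Your proposal follows essentially the same route as the paper, which itself only sketches the argument by deferring to Theorem 3.2 of \cite{GP11}: differentiate (\ref{dwfj}) once or twice in time, test with the weighted functions $\tau^*(t)e^{2\alpha t}\bu_{h,t}$, $(\tau^*(t))^2e^{2\alpha t}(-\td_h)\bu_{h,t}$ and $(\tau^*(t))^3e^{2\alpha t}\bu_{h,tt}$, cascade from lower to higher $(i,r)$, and recover the pressure via the inf-sup condition. One small correction: after differentiating the equation, the memory term $\delta\int_0^t\beta(t-s)\,a(\bu_h(s),\bu_{h,t}(t))\,ds$ pairs $\bu_h$ against $\bu_{h,t}$ and is therefore not of the self-convolution form required by Lemma \ref{pp}; it must instead be bounded directly by Cauchy--Schwarz together with the uniform bound on $\|\nabla\bu_h\|$ from (\ref{uh02}), which is harmless and does not affect the rest of your argument.
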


\begin{proof}
The proofs of (\ref{1uh01}) for $i=1$ and of (\ref{1uh02}) follow in a similar fashion as that of Theorem $3.2$ from \cite{GP11}. For the remaining estimates, we need to differentiate (\ref{dwfj}) twice with respect to time and choose $\bphi_h=(\tau^*(t))^3 e^{2\alpha t} \bu_{h,tt}$ and again follow the ideas of the proof of Theorem $3.2$ from \cite{GP11} to complete the rest of the proof.
\end{proof}

\noindent The following semi-discrete error estimates are proved in \cite[Theorems $5.1$ and $6.1$]{GP11}.
\begin{theorem}\label{errest}
Let $\Omega$ be a convex polygon and let the conditions (${\bf A1}$)-(${\bf A2}$)
and (${\bf B1}$)-(${\bf B2}$) be satisfied. Further, let the discrete initial velocity $\bu_{0h}\in \bJ_h$ with $\bu_{0h}=P_h\bu_0,$ where $\bu_0\in \bJ_1.$ Then,
there exists a positive constant $K$, that depends only on the given data and the
domain $\Omega$, such that for $0<T<\infty $ with $t\in (0,T]$
$$ \|(\bu-\bu_h)(t)\|+h\|\nabla(\bu-\bu_h)(t)\|+h\|(p-p_h)(t)\|\le Ke^{Kt}h^2t^{-1/2}. $$
\end{theorem}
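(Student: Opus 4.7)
The plan is to use the standard splitting via an auxiliary elliptic projection adapted to the memory operator, and to carry through all estimates with the time weight $\tau^*(t)=\min\{1,t\}$ so as to absorb the $t^{-1/2}$ singularity inherited from the non-smooth initial data.

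First, I would introduce a Stokes--Volterra projection $\{\tilde{\bu}_h(t),\tilde{p}_h(t)\}\in\bH_h\times L_h$ associated with the operator $-\mu\Delta-\int_0^t\beta(t-s)\Delta(\cdot)\,ds$, defined so that $\rho(t):=\bu(t)-\tilde{\bu}_h(t)$ and $p-\tilde p_h$ satisfy an orthogonality relation against all $\bphi_h\in\bH_h$ and $\chi_h\in L_h$. Using assumption (${\bf A1}$), (${\bf B1}$), (${\bf B2}'$), the regularity estimates for $(\bu,p)$ from \cite{GP11}, and Gronwall's lemma to handle the history integral, I would establish the projection-error bounds
\[
\|\rho(t)\| + h\bigl(\|\nabla\rho(t)\| + \|(p-\tilde p_h)(t)\|\bigr) \le K h^2 (\tau^*(t))^{-1/2},
\]
together with analogous $\tau^*$-weighted bounds on $\rho_t$ needed below.

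Next, I would split $\bu-\bu_h=\rho+\theta$ with $\theta:=\tilde{\bu}_h-\bu_h\in\bJ_h$. Subtracting (\ref{dwfj}) from (\ref{wfj}) restricted to $\bJ_h$ and using the defining property of $\tilde{\bu}_h$, I arrive at the error equation
\[
(\theta_t,\bphi_h)+\mu a(\theta,\bphi_h)+\int_0^t\beta(t-s)a(\theta(s),\bphi_h)\,ds = -(\rho_t,\bphi_h)+b(\bu_h,\bu_h,\bphi_h)-b(\bu,\bu,\bphi_h)
\]
for all $\bphi_h\in\bJ_h$. Choosing $\bphi_h=\tau^*(t)e^{2\alpha t}\theta$, invoking the positivity of the kernel (Lemma \ref{pp}), the a priori bounds of Lemmas \ref{est.uh}--\ref{est1.uh}, the nonlinear estimates (\ref{nonlin1}), and Gronwall's lemma gives $\|\nabla\theta(t)\|\le Kh(\tau^*(t))^{-1/2}e^{Kt}$, which combined with the projection estimate yields the claimed $H^1$ velocity bound.

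For the pressure, I would recover $\|p_h-\tilde p_h\|$ pointwise in $t$ from the mixed error equation by invoking the discrete inf-sup condition (${\bf B2}'$); this requires an a priori estimate for $\theta_t$, obtained by differentiating the error equation in time and testing with $\bphi_h=(\tau^*(t))^{3/2}e^{2\alpha t}\theta_t$, paralleling the derivation of the primal bound for $\bu_{h,t}$. The main obstacle is the sharp $L^2$-in-space velocity estimate $\|(\bu-\bu_h)(t)\|\le Kh^2(\tau^*(t))^{-1/2}$, which is one order better than what energy testing alone delivers: this requires an Aubin--Nitsche duality argument against a backward-in-time dual Stokes--Volterra problem, whose regularity must be proved by reversing time and exploiting the positivity of $\beta$ in the time-reversed convolution, while carefully tracking the coupling of the memory terms across distinct time levels and the $t^{-1/2}$ blow-up of the primal solution near $t=0$. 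Consistent use of the weight $\tau^*$ throughout is precisely what makes the non-smooth-data argument go through.
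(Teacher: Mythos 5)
First, note that the paper does not prove Theorem \ref{errest} itself: it is quoted from \cite[Theorems 5.1 and 6.1]{GP11}, and the structure of that proof is visible in the sketches of Lemmas \ref{err.et} and \ref{err.so}. There the error is split as $\E=\bu-\bu_h=\bxi+\bta$ with $\bxi=\bu-\bv_h$, where $\bv_h$ is the solution of an auxiliary \emph{evolutionary} linearized Galerkin problem (equation (5.2) of \cite{GP11}), i.e. $(\bxi_t,\bphi_h)+\mu a(\bxi,\bphi_h)+\int_0^t\beta(t-s)a(\bxi(s),\bphi_h)\,ds=(p,\nabla\cdot\bphi_h)$, and the nonlinearity appears only in the $\bta$-equation through $\Lambda_{1,h}$. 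Your decomposition is genuinely different: you use a Ritz--Volterra (``Stokes--Volterra'') \emph{elliptic} projection $\tilde\bu_h$, so that the time derivative of the projection error $\rho_t$ enters the $\theta$-equation as a forcing term. That is a standard alternative for smooth data, but it is precisely what the Heywood--Rannacher-style intermediate solution $\bv_h$ is designed to avoid in the non-smooth setting.

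The concrete gap is the term $-(\rho_t,\bphi_h)$. With $\bu_0\in\bJ_1$ only, the regularity available is $\|\bu_t(t)\|_1\sim t^{-1}$ and $\|\bu_t(t)\|_2\sim t^{-3/2}$, so the best one can say pointwise is $\|\rho_t(t)\|\le Kh\|\bu_t(t)\|_1\sim Kht^{-1}$ (or $Kh^2 t^{-3/2}$). Testing with $\bphi_h=\tau^*(t)e^{2\alpha t}\theta$ then produces $\int_0^t \tau^*(s)e^{2\alpha s}\|\rho_s(s)\|^2\,ds$, which behaves like $h^2\int_0^t s^{-1}\,ds$ (respectively $h^4\int_0^t s^{-2}\,ds$) and diverges at $s=0$; the ``analogous $\tau^*$-weighted bounds on $\rho_t$'' you invoke do not close the argument with this weight. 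You would need either a negative-norm estimate of $\rho_t$ or a restructuring of the weights (and ultimately a duality argument for $\theta$ as well), which is essentially reconstructing the $\bxi$/$\bta$ machinery of \cite{GP11}. A secondary issue: the test function $\tau^*(t)e^{2\alpha t}\theta$ controls $\tau^*(t)\|\theta(t)\|^2$ and the time-averaged quantity $\int_0^t\sigma(s)\|\nabla\theta(s)\|^2\,ds$, not the pointwise $\|\nabla\theta(t)\|$ you claim; a pointwise gradient bound requires testing with $\sigma_1(t)\theta_t$ or $-\tau^*(t)\td_h\theta$, which again brings in the problematic $\rho_t$ (or $\rho_{tt}$). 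Your overall architecture (weighted energy estimates, positivity of the kernel, inf-sup for the pressure, duality for the $L^2$ velocity bound) matches the cited proof in spirit, but the choice of elliptic projection as the pivot is where the non-smooth-data argument breaks down as written.
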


For the error analysis of the two-level method, we need a couple of additional estimates of 
the error due to Galerkin finite element, that is, of $(\bu-\bu_h)(t)$. In fact, we need estimates of the time derivative of the error. As in the case of parabolic problem, we need to differentiate the error equation, that is, the equation in $(\bu-\bu_h)(t)$, with respect to time and derive several estimates to conclude the result. It is technical and lengthy, but follow the proof techniques of Section $5$ from \cite{GP11}. For the sake of completeness, we try to sketch a proof here. We will be as brief as possible, conveying only the ideas, just to keep it clean and concise.

\begin{lemma}\label{err.et}
Under the assumptions of Theorem \ref{errest}, the following result holds:
\begin{equation}\label{err.et1}
e^{-2\alpha t}\int_0^t \sigma_1(s)\|(\bu_s-\bu_{h,s})(s)\|^2 ds \le K(t)h^4,
\end{equation}
where $\sigma_1(t)=(\tau^*(t))^2 e^{2\alpha t}$ and $K(t)$ is in fact, of the form $Ke^{Kt}$, but is independent of $h$. $\alpha$ is given by Lemma \ref{est.uh}.
\end{lemma}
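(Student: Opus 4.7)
\noindent\emph{Proof proposal.} The plan is to split the error as $\bu - \bu_h = \bta + \bxi$ with $\bta := \bu - P_h\bu$ and $\bxi := P_h\bu - \bu_h \in \bJ_h$, estimate $\bta$ directly via approximation theory, and derive a weighted energy estimate for $\bxi_t$ from the time-differentiated error equation, paralleling Section~5 of \cite{GP11}. For the $\bta$-part, (\ref{ph2}) gives $\|\bta_s\| \le Ch^2\|\bu_s\|_2$; together with the continuous analog of Lemma~\ref{est1.uh} (for $r=1,\,i=1$, proved in \cite{GP11}), which yields $\int_0^t (\tau^*(s))^2 e^{2\alpha s}\|\bu_s\|_2^2\,ds \le Ke^{2\alpha t}$, this delivers
\begin{align*}
e^{-2\alpha t}\int_0^t \sigma_1(s)\|\bta_s\|^2\,ds \le Ch^4\cdot e^{-2\alpha t}\int_0^t (\tau^*(s))^2 e^{2\alpha s}\|\bu_s\|_2^2\,ds \le Kh^4,
\end{align*}
so it remains to bound $\int_0^t \sigma_1(s)\|\bxi_s\|^2\,ds$.

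Subtracting (\ref{dwfj}) from (\ref{wfj}) restricted to $\bJ_h$, the discrete component $\bxi$ satisfies an error equation $(\bxi_t,\bphi_h) + \mu\, a(\bxi,\bphi_h) + \int_0^t \beta(t-s)\,a(\bxi(s),\bphi_h)\,ds = \mathcal{R}(t;\bphi_h)$, where $\mathcal{R}$ collects the $\bta$, pressure, and convective contributions. Differentiate this identity in $t$, test with $\bphi_h = \sigma_1(t)\bxi_t \in \bJ_h$, integrate on $(0,t)$, and integrate by parts in time on $(\bxi_{tt},\sigma_1\bxi_t)$ using $\sigma_1(0)=0$, to obtain
\begin{align*}
\tfrac12 \sigma_1(t)\|\bxi_t(t)\|^2 + \mu \int_0^t \sigma_1(s)\|\nabla\bxi_s\|^2\,ds + (\textrm{kernel}) = \tfrac12 \int_0^t \sigma_1'(s)\|\bxi_s\|^2\,ds + \int_0^t \sigma_1(s)\tfrac{d}{ds}\mathcal{R}(s;\bxi_s)\,ds.
\end{align*}
The kernel contribution, after rewriting $\beta_t = -\delta\beta$ and a further integration by parts in time, is dominated by the bounds on $\bxi,\nabla\bxi$ from Theorem~\ref{errest} together with Lemma~\ref{pp}. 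The $\tfrac12 \int_0^t \sigma_1'\|\bxi_s\|^2\,ds$ contribution is controlled by a parallel energy estimate with the weaker weight $\tau^*(s)e^{2\alpha s}$ in place of $\sigma_1$, which is bootstrapped into the main inequality to avoid circularity.

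The remaining right-hand side splits into linear consistency terms involving $\bta_s$ and $p_s - j_h p_s$, bounded by Cauchy--Schwarz, (\ref{ph1})-(\ref{ph2}) and Lemma~\ref{est1.uh}, and the differentiated convective term, which via Leibniz expands as $b(\e_s,\bu,\bxi_s) + b(\e,\bu_s,\bxi_s) + b(\bu_s,\e,\bxi_s) + b(\bu,\e_s,\bxi_s)$ with $\e = \bta + \bxi$. Each piece is estimated by Lemma~\ref{nonlin} in a form such as $|(\bv\cdot\nabla\bw,\bphi)|\le C\|\bv\|^{1/2}\|\nabla\bv\|^{1/2}\|\nabla\bw\|\|\bphi\|^{1/2}\|\nabla\bphi\|^{1/2}$; the bounded factors are supplied by Lemmas~\ref{est.uh}-\ref{est1.uh} and Theorem~\ref{errest}, while the $\|\nabla\bxi_s\|$-factors are absorbed into the dissipation $\mu\int_0^t \sigma_1\|\nabla\bxi_s\|^2\,ds$. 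Gronwall's inequality then yields $\sigma_1(t)\|\bxi_t(t)\|^2 \le Ke^{Kt}h^4$, whence $e^{-2\alpha t}\int_0^t \sigma_1(s)\|\bxi_s\|^2\,ds \le Kh^4\cdot e^{-2\alpha t}\int_0^t e^{Ks}\,ds \le K(t)h^4$; combining with the $\bta$-bound completes the estimate.

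The main obstacle is the pair $b(\bu_s,\e,\bxi_s) + b(\e,\bu_s,\bxi_s)$ arising from the differentiated convection: since $\|\bu_s\|$ blows up like $(\tau^*(s))^{-1/2}$ as $s\to 0$ (Lemma~\ref{est1.uh}) and $\|\nabla\e\|$ like $s^{-1/2}$ (Theorem~\ref{errest}), the factor $(\tau^*(s))^2$ in $\sigma_1$ is exactly tuned to render the joint singularity integrable while still delivering the $h^4$ gain. The secondary difficulty, the $\sigma_1'$-term on the right, is the reason for the bootstrap through the weaker weight $\tau^*(s)e^{2\alpha s}$.
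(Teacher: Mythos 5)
There is a genuine gap at the decisive step for the discrete component: a direct energy estimate obtained by testing the differentiated $\bxi$-equation with $\sigma_1(t)\bxi_t$ cannot deliver the $h^4$ rate you assert. With your splitting $\bxi=P_h\bu-\bu_h$, the $\bxi$-equation carries the consistency terms $\mu\,a(\bta,\bphi_h)$, the memory term in $\bta$, and the pressure term $(p-j_hp,\nabla\cdot\bphi_h)$; after differentiating in time and choosing $\bphi_h=\sigma_1(t)\bxi_t$ these contribute, for instance,
\begin{align*}
\mu\,a(\bta_t,\bxi_t)\le Ch\,\|\bu_t\|_2\,\|\nabla\bxi_t\|,\qquad
(p_t-j_hp_t,\nabla\cdot\bxi_t)\le Ch\,\|p_t\|_1\,\|\nabla\bxi_t\|,
\end{align*}
and after Young's inequality and absorption into the dissipation $\mu\sigma_1\|\nabla\bxi_t\|^2$ the surviving terms are $O(h^2)$, not $O(h^4)$. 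This is precisely what the paper obtains at the corresponding stage, namely (\ref{err.et07}), which is only an $O(h^2)$ bound; the upgrade of $\int_0^t\sigma_1(s)\|\bxi_s\|^2\,ds$ to $O(h^4)$ is not a consequence of Gronwall's lemma but requires an additional mechanism. The paper supplies two such devices: a parabolic duality argument (yielding (\ref{err.et08})) for the part of the error measured against the solution $\bv_h$ of an auxiliary \emph{linearized} Galerkin problem, and, for the remaining discrete part $\bta=\bv_h-\bu_h\in\bJ_h$, an energy estimate with the test function $\sigma_1(t)(-\td_h)^{-1}\bta_t$, so that the dissipation produced is $\mu\sigma_1\|\bta_t\|^2$ itself and every right-hand side term is genuinely quadratic in errors. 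Your proposal contains neither device, and the sentence ``Gronwall's inequality then yields $\sigma_1(t)\|\bxi_t(t)\|^2\le Ke^{Kt}h^4$'' is exactly where the argument breaks.

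A secondary remark: your decomposition through the $L^2$-projection differs from the paper's, which splits through the auxiliary linearized Galerkin solution precisely so that the ``linear'' part is amenable to duality and the ``nonlinear'' part satisfies an equation driven only by the quadratically small commutator $\Lambda_{1,h}$. Your treatment of the projection part via (\ref{ph2}) and the weighted regularity of $\bu_s$ is sound, and your observation about the role of the weight $\sigma_1$ in taming the $t\to 0$ singularities is correct; what is missing is solely, but essentially, the duality/negative-norm step that converts the $O(h^2)$ energy bound into the claimed $O(h^4)$ bound in $L^2$.
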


\begin{proof}
We first recall the equation of $\E=\bu-\bu_h$ (see \cite[(6.2)]{GP11}).
\begin{equation}\label{err.et01}
(\E_t,\bphi_h)+\mu a(\E,\bphi_h)+\int_0^t \beta(t-s) a(\E(s),\bphi_h)~ds= \Lambda_{1,h}(\bphi_h)+(p,\nabla\cdot\bphi_h), ~~\forall\bphi_h\in \bJ_h,
\end{equation}
where
\begin{align}\label{lamb1}
\Lambda_{1,h}(\bphi_h) = b(\bu_h,\bu_h,\bphi_h)-b(\bu,\bu,\bphi_h)= -b(\bu_h,\E,\bphi_h)-b(\E,\bu,\bphi_h).
\end{align}
We differentiate (\ref{err.et01}) to find
\begin{align}\label{err.et02}
(\E_{tt},\bphi_h)+\mu a(\E_t,\bphi_h)+\gamma a(\E,\bphi_h)+\int_0^t \beta_t(t-s) a(\E(s),\bphi_h)~ds= \Lambda_{1,h,t}(\bphi_h)+(p_t,\nabla\cdot\bphi_h),
\end{align}
where
\begin{align}\label{lamb1t} 
\Lambda_{1,h,t}(\bphi_h)= -b(\bu_{h,t},\E,\bphi_h)-b(\bu_h,\E_t,\bphi_h) -b(\E_t,\bu,\bphi_h)-b(\E,\bu_t,\bphi_h).
\end{align}
With $\bv_h$ as the solution of a linearized Galerkin approximation $(5.2)$ of \cite{GP11}, we break the error $\E_t$ in linear and non-linear parts, as
$$ \E_t=(\bu-\bv_h)_t+(\bv_h-\bu_h)_t := \bxi_t+\bta_t. $$
The equation in $\bxi_t$ is given by
\begin{align}\label{bxit}
(\bxi_{tt},\bphi_h)+\mu a(\bxi_t,\bphi_h)+\gamma a(\bxi,\bphi_h)+\int_0^t \beta_t(t-s)
a(\bxi(s),\bphi_h)~ds= (p_t,\nabla\cdot\bphi_h).
\end{align}
With $P_h:\bL^2(\Omega)\to \bJ_h$ as $L^2$-projection, we choose $\bphi_h= \sigma_1(t) P_h\bxi_t=\sigma_1(t)(\bxi_t-(\bu-P_h\bu)_t)$ in (\ref{bxit}) to find (here $\sigma_1(t)= (\tau^*(t))^2 e^{2\alpha t},~\tau^*(t)=\min \{1,t\}$)
\begin{align}\label{err.et04a}
\frac{1}{2}\frac{d}{dt}\big\{\sigma_1(t)\|\bxi_t\|^2\}+\mu\sigma_1(t)\|\bxi_t\|_1^2=
\frac{1}{2}\sigma_{1,t}(t)\|\bxi_t\|^2-\gamma\sigma_t(t) a(\bxi,\bxi_t-(\bu-P_h\bu)_t) \nonumber \\
+\sigma_1(t)(\bxi_{tt},(\bu-P_h\bu)_t)+\mu\sigma_1(t) a(\bxi_t,(\bu-P_h\bu)_t) \\
-\sigma_1(t)\int_0^t \beta_t(t-s) a(\bxi(s),\bxi_t-(\bu-P_h\bu)_t)~ds
+\sigma_1(t)(p_t,\nabla\cdot P_h\bxi_t). \nonumber
\end{align}
 We note that
\begin{align*}
\sigma_1(t)(\bxi_{tt},(\bu-P_h\bu)_t) &=\sigma_1(t)((\bu-P_h\bu)_{tt},(\bu-P_h\bu)_t)
=\frac{\sigma_1(t)}{2}\frac{d}{dt}\|(\bu-P_h\bu)_t\|^2 \\
&=\frac{d}{dt}\big\{\frac{\sigma_1(t)}{2}\|(\bu-P_h\bu)_t\|^2\big\}-\frac{\sigma_{1,t}(t)}{2}
\|(\bu-P_h\bu)_t\|^2, \\
\sigma_1(t)(p_t,\nabla\cdot P_h\bxi_t) &=\sigma_1(t)(p_t-j_hp_t,\nabla\cdot P_h\bxi_t).
\end{align*}
Incorporate these in (\ref{err.et04a}). Then, use projection properties $({\bf B1})$ and Cauchy-Schwarz inequality to have
\begin{align}\label{bxit01}
\frac{1}{2}\frac{d}{dt}\big\{\sigma_1(t)\|\bxi_t\|^2\}+\mu\sigma_1(t)\|\bxi_t\|_1^2 \le
C\sigma(t)\|\bxi_t\|^2+C\sigma_1(t)\|\bxi\|_1\{\|\bxi_t\|_1+ch\|\bu_t\|_2\} \nonumber \\
\frac{d}{dt}\big\{\frac{\sigma_1(t)}{2}\|(\bu-P_h\bu)_t\|^2\big\}-\frac{\sigma_{1,t}(t)}{2}
\|(\bu-P_h\bu)_t\|^2+Ch~\sigma_1(t)\|\bxi_t\|_1\big\{\|\bu_t\|_2+\|p_t\|_1\big\} \nonumber \\
+C\sigma_1(t)\{\|\bxi_t\|_1+ch\|\bu_t\|_2\}\int_0^t \beta_t(t-s) \|\bxi(s)\|_1~ds.
\end{align}
($\sigma(t)=\tau^*(t) e^{2\alpha t},~\tau^*(t)=\min \{1,t\}$) \\
Use Young's inequality and kickback argument. Integrate the resulting inequality. We handle the double integral term as ($4.6$) of \cite{PY05} to find
\begin{align*}
\sigma_1(t)\|\bxi_t\|^2+\mu\int_0^t \sigma_1(s)\|\bxi_s(s)\|_1^2 ds \le C\int_0^t \sigma(s)\|\bxi_s(s)\|^2 ds+Ch^2\int_0^t \sigma_1(s) \big\{\|\bu_s(s)\|_2^2
+\|p_s(s)\|_1^2\big\}~ds \\
C\int_0^t e^{2\alpha s}\|\bxi(s)\|_1^2 ds+Ch~\sigma_1(t)\|\bu_t\|_1^2+Ch\int_0^t \sigma(s)
\|\bu_s(s)\|_1^2 ds.
\end{align*}
Use Theorem $3.2$ from \cite{GP11} to bound the terms involving $\bu$ and $p$. We borrow the estimate ($5.7$) from \cite{PY05} (although the estimate is for $\f\equiv 0$, can be achieved similarly in our case), that is,
\begin{equation}\label{err.et04}
e^{-2\alpha t}\int_0^t e^{2\alpha s}\|\bxi(s)\|_1^2 ds \le Kh^2
\end{equation}
to observe that
\begin{equation}\label{err.et05}
(\tau^*(t))^2\|\bxi_t\|^2+\mu e^{-2\alpha t}\int_0^t \sigma_1(s)\|\bxi_s(s)\|_1^2 ds \le
Ce^{-2\alpha t}\int_0^t \sigma(s)\|\bxi_s(s)\|^2 ds+Kh^2.
\end{equation}
To estimate the first term on the right hand-side of (\ref{err.et05}), we first recall the equation in $\bxi$ (see \cite[(5.3)]{GP11}):
\begin{align*}
(\bxi_t,\bphi_h)+\mu a(\bxi,\bphi_h)+\int_0^t \beta(t-s) a(\bxi(s),\bphi_h)~ds
=(p,\nabla\cdot\bphi_h),~~\forall\bphi_h\in \bJ_h.
\end{align*}
Choose $\bphi_h=\sigma(t)P_h\bxi_t$ above.
\begin{align*}
\sigma(t)\|\bxi_t\|^2+\frac{\mu}{2}\frac{d}{dt}\big\{\sigma(t)\|\bxi\|_1^2\} \le
Ce^{2\alpha t}\|\bxi\|_1^2+Ch.\sigma(t)\big\{\|\bxi_t\|\|\bu_t\|_1
+\|\bxi\|_1\|\bu_t\|_2+\|p\|_1\|\bxi_t\|_1\big\} \\
+C\sigma(t)\{\|\bxi_t\|_1+Ch\|\bu_t\|_2\}\int_0^t \beta_t(t-s) \|\bxi(s)\|_1~ds.
\end{align*}
Integrate with respect to time, use kickback argument, Lemma \ref{est1.uh} and (\ref{err.et04}) to conclude
\begin{equation}\label{err.et06}
e^{-2\alpha t}\int_0^t \sigma(s)\|\bxi_s(s)\|^2 ds+\tau^*(t)\|\bxi\|_1^2 \le Kh^2.
\end{equation}
We have again used Theorem $3.2$ from \cite{GP11} to bound $\bu_t$ and $p$.
Using (\ref{err.et06}) in (\ref{err.et05}) leads us to
\begin{equation}\label{err.et07}
(\tau^*(t))^2\|\bxi_t\|^2+ e^{-2\alpha t}\int_0^t \sigma_1(s)\|\bxi_s(s)\|_1^2 ds \le Kh^2.
\end{equation}
\noindent Next, we use parabolic duality argument (similar to the arguments ($5.8$)-($5.15$) and the rest, in \cite{PY05}) to establish 
\begin{equation}\label{err.et08}
e^{-2\alpha t}\int_0^t \sigma_1(s)\|\bxi_s(s)\|^2 ds \le Kh^4.
\end{equation}
An estimate of $\bta_t$ would now complete the proof. By definition, we can easily deduce the equation satisfied by $\bta_t$.
\begin{align}\label{btat}
(\bta_{tt},\bphi_h)+\mu a(\bta_t,\bphi_h)+\gamma a(\bta,\bphi_h)+\int_0^t \beta_t(t-s)
a(\bta(s),\bphi_h)~ds= \Lambda_{1,h,t}(\bphi_h),
\end{align}
where $\bphi_{1,h,t}$ is given by (\ref{lamb1t}). Choose $\bphi_h=\sigma_1(t)(-\td_h)^{-1}\bta_t$ and similar to (\ref{bxit01}), we find
\begin{align}\label{btat01}
\frac{1}{2}\frac{d}{dt}\big\{\sigma_1(t)\|\bta_t\|_{-1}^2\}+\mu\sigma_1(t)\|\bta_t\|^2 \le
C\sigma(t)\|\bta_t\|_{-1}^2+C\sigma_1(t)\|\bta\|\|\bta_t\| \nonumber \\
+\sigma_1(t)\|\bta_t\|\int_0^t \beta_t(t-s) \|\bta(s)\|~ds+\Lambda_{1,h,t}(\sigma_1(t)
(-\td_h)^{-1}\bta_t).
\end{align}
From (\ref{lamb1t}) and from the definition of the non-linear operator, we have
\begin{align}\label{err.et09}
\Lambda_{1,h,t}((-\td_h)^{-1}\bta_t)=& -b(\bu_{h,t},\E,(-\td_h)^{-1}\bta_t)-b(\bu_h,\E_t,
(-\td_h)^{-1}\bta_t) -b(\E_t,\bu,(-\td_h)^{-1}\bta_t) \nonumber \\
&-b(\E,\bu_t,(-\td_h)^{-1}\bta_t) \nonumber \\
= -\frac{1}{2}\big\{((\bu_{h,t}\cdot\nabla)&\E,(-\td_h)^{-1}\bta_t)-((\bu_{h,t}\cdot\nabla) (-\td_h)^{-1}\bta_t,\E)+((\bu_h\cdot\nabla)\E_t,(-\td_h)^{-1}\bta_t) \nonumber \\
-((\bu_h\cdot\nabla)&(-\td_h)^{-1}\bta_t, \E_t)+((\E_t\cdot\nabla)\bu,(-\td_h)^{-1}\bta_t) -((\E_t\cdot\nabla) (-\td_h)^{-1}\bta_t,\bu) \nonumber \\
+((\E\cdot\nabla)&\bu_t,(-\td_h)^{-1}\bta_t)-((\E\cdot\nabla)(-\td_h)^{-1}\bta_t,\bu_t)\big\}.
\end{align}
For the $2nd, ~7th$ and $8th$ terms on the right-hand side of (\ref{err.et09}), we use Lemmas \ref{nonlin} and \ref{est1.uh}, and Theorem $3.2$ from \cite{GP11}. For some $\varepsilon>0$, we now find
\begin{align}\label{err.et10}
&~\frac{1}{2}\sigma_1(t)\big\{((\bu_{h,t}\cdot\nabla)(-\td_h)^{-1}\bta_t,\E)-((\E\cdot\nabla)
\bu_t,(-\td_h)^{-1}\bta_t)+((\E\cdot\nabla)(-\td_h)^{-1}\bta_t,\bu_t)\big\} \nonumber \\
\le & ~C\sigma_1(t)\|\E\|\|\bta_t\|_{-1}^{1/2}\|\bta_t\|^{1/2}(\|\bu_{h,t}\|_1
+\|\bu_t\|_1)
\le \varepsilon \sigma_1(t)\|\bta_t\|^2+C\sigma(t)\|\bta_t\|_{-1}^2+Ke^{2\alpha t}\|\E\|^2.
\end{align}
Similarly, writing $\E=\bxi+\bta$ and using Lemmas \ref{nonlin} and \ref{est.uh}, we estimate the $4th,~5th$ and $6th$ terms on the right-hand side of (\ref{err.et09}).
\begin{align}\label{err.et11}
\frac{1}{2}\sigma_1(t)&~\big\{((\bu_h\cdot\nabla)(-\td_h)^{-1}\bta_t,\E_t)-((\E_t\cdot\nabla)
\bu,(-\td_h)^{-1}\bta_t)+((\E_t\cdot\nabla)(-\td_h)^{-1}\bta_t,\bu)\big\} \nonumber \\
\le &~C\sigma_1(t)(\|\bxi_t\|+\|\bta_t\|)\|\bta_t\|_{-1}^{1/2}\|\bta_t\|^{1/2}(\|\bu_h\|_1
+\|\bu\|_1)  \nonumber \\
\le &~K\sigma_1(t)\|\bxi_t\|\|\bta_t\|_{-1}^{1/2}\|\bta_t\|^{1/2}+K\sigma_1(t)
\|\bta_t\|_{-1}^{1/2}\|\bta_t\|^{3/2}  \nonumber \\
\le &~\varepsilon \sigma_1(t)\|\bta_t\|^2+K\sigma(t)\|\bta_t\|_{-1}^2+K\sigma_1(t)
\|\bxi_t\|^2.
\end{align}
To bound the $1st$ term on the right-hand side of (\ref{err.et09}), we first rewrite it as follows: \\
({\it with the notations $D_i=\frac{\partial}{\partial x_i}$ and $\bv=(v_1,v_2)$})
\begin{align}\label{nonlin02}
((\bu_{h,t}\cdot\nabla)\E, &(-\td_h)^{-1}\bta_t) =\sum_{i,j=1}^2 \int_{\Omega} u_{h,t,i}
D_i E_j((-\td_h)^{-1}\bta_t)_j~d{\bf x} \nonumber \\
&= -\sum_{i,j=1}^2 \int_{\Omega} D_i u_{h,t,i}E_j((-\td_h)^{-1}\bta_t)_j~d{\bf x} -\sum_{i,j=1}^2 \int_{\Omega} u_{h,t,i}D_i \{((-\td_h)^{-1}\bta_t)_j\}E_j~d{\bf x}
\nonumber \\
&= -((\nabla\cdot\bu_{h,t})\E,(-\td_h)^{-1}\bta_t)-((\bu_{h,t}\cdot\nabla)(-\td_h)^{-1} \bta_t,\E).
\end{align}
As in (\ref{err.et10}), we now have the following bound.
\begin{align}\label{err.et11a}
-\frac{1}{2}\sigma_1(t)((\bu_{h,t}\cdot\nabla)\E,(-\td_h)^{-1}\bta_t) \le \varepsilon \sigma_1(t)\|\bta_t\|^2+C\sigma(t)\|\bta_t\|_{-1}^2+Ke^{2\alpha t}\|\E\|^2.
\end{align}
Using similar technique and as in (\ref{err.et11}), we can bound the $3rd$ term on the right-hand side of (\ref{err.et09}).
\begin{align}\label{err.et11b}
-\frac{1}{2}\sigma_1(t)((\bu_h\cdot\nabla)\E_t,(-\td_h)^{-1}\bta_t) \le \varepsilon \sigma_1(t)\|\bta_t\|^2+K\sigma(t)\|\bta_t\|_{-1}^2+K\sigma_1(t)\|\bxi_t\|^2.
\end{align}
Incorporate (\ref{err.et10})-(\ref{err.et11b}) in (\ref{err.et09}) and then in (\ref{btat01}). Use Young's inequality and appropriate $\varepsilon>0$ and finally integrate with respect time to find
\begin{align*}
\sigma_1(t)\|\bta_t\|_{-1}^2+\mu\int_0^t \sigma_1(s)\|\bta_s(s)\|^2 ds \le
K\int_0^t \sigma(s)\|\bta_s(s)\|_{-1}^2 ds \nonumber \\
+K\int_0^t e^{2\alpha s}\big\{\|\bta(s)\|^2+\|\E(s)\|^2\big\}~ds+K\int_0^t \sigma_1(s)
\|\bxi_s(s)\|^2 ds.
\end{align*}
Use Lemma $5.4$ from \cite{GP11} and (\ref{err.et08}) to obtain
\begin{align}\label{err.et12}
\sigma_1(t)\|\bta_t\|_{-1}^2+\mu\int_0^t \sigma_1(s)\|\bta_s(s)\|^2 ds \le K(t)e^{2\alpha t}h^4+K\int_0^t \sigma(s)\|\bta_s(s)\|_{-1}^2 ds.
\end{align}
To estimate the last term of (\ref{err.et12}), we recall the equation in $\bta$ (see (5.12) of \cite{GP11}):
\begin{align}\label{bta0}
(\bta_t,\bphi_h)+\mu a(\bta,\bphi_h)+\int_0^t \beta(t-s) a(\bta(s),\bphi_h)~ds= \Lambda_{1,h}(\bphi_h),
\end{align}
where $\Lambda_{1,h}$ is given by (\ref{lamb1}).
We rewrite it using integration by parts.
\begin{align}\label{bta}
(\bta_t,\bphi_h)+\mu a(\bta,\bphi_h)+\gamma a(\tbta,\bphi_h)+\int_0^t \beta_t(t-s) a(\tbta(s),
\bphi_h)~ds=\Lambda_{1,h}(\bphi_h),
\end{align}
where
$$ \tbta(t)= \int_0^t \bta(s)~ds. $$
Choose $\bphi_h=\sigma(t)(-\td_h)^{-1}\bta_t$ in (\ref{bta}).
\begin{align}\label{err.et13}
\sigma(t)\|\bta_t\|_{-1}^2+\frac{\mu}{2}\frac{d}{dt}\{\sigma(t)\|\bta\|^2\} \le 
\frac{\mu}{2}\sigma_t(t)\|\bta\|^2+C\sigma(t)\|\bta_t\|_{-1}\big\{\|\tbta\|_1+\int_0^t \beta(t-s) \|\tbta(s)\|_1~ds\big\} \nonumber \\
+\sigma(t)\Lambda_{1,h}((-\td_h)^{-1}\bta_t).
\end{align}
We estimate the non-linear term $\Lambda_{1,h}$, following the technique used to estimate (\ref{err.et09}).
\begin{align*}
\sigma(t)\Lambda_{1,h}((-\td_h)^{-1}\bta_t)= -\sigma(t) b(\E,\bu_h,(-\td_h)^{-1}\bta_t)
-\sigma(t) b(\bu, \E,(-\td_h)^{-1}\bta_t) \\
\le c\sigma(t)\|\E\|\|\bta_t\|_{-1}\{\|\bu_h\|_2+\|\bu\|_2\}
\end{align*}
Incorporate this in (\ref{err.et13}) and use kickback argument. We bound $\bu_h$ and $\bu$ using Lemma \ref{est.uh} and Theorem $3.1$ from \cite{GP11}, and then integrate with respect to time.
\begin{align}\label{err.et14}
\int_0^t \sigma(s)\|\bta_s(s)\|_{-1}^2 ds+\sigma(t)\|\bta\|^2 \le 
c\int_0^t e^{2\alpha s}\big\{\|\bta(s)\|^2+\|\tbta(s)\|_1^2+K\|\E(s)\|^2\big\}~ds.
\end{align}
Apply Lemmas $5.4$ and $5.5$ from \cite{GP11} to bound the left-hand side of (\ref{err.et14}) by $K(t)h^4$. Incorporate this in (\ref{err.et12}) to complete the rest of the proof.
\end{proof}

\begin{lemma}\label{err.so} 
Under the assumptions of Theorem \ref{err.et}, the following result holds:
\begin{equation}\label{err.so1}
e^{-2\alpha t}\int_0^t e^{2\alpha s}\|(\bu-\bu_h)(s)\|_1^2 ds+e^{-2\alpha t}\int_0^t \sigma_1(s)\|(\bu_s-\bu_{h,s})(s)\|_1^2 ds \le K(t)h^2.
\end{equation}
\end{lemma}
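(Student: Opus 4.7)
The plan is to use the linear--nonlinear splitting $\E=\bu-\bu_h=\bxi+\bta$, where $\bxi=\bu-\bv_h$ and $\bta=\bv_h-\bu_h$, exactly as introduced in the proof of Lemma~\ref{err.et}. The bounds required for $\bxi$ are already available as intermediate results there: estimate~(\ref{err.et04}) supplies $e^{-2\alpha t}\int_0^t e^{2\alpha s}\|\bxi\|_1^2\,ds\le Kh^2$, and (\ref{err.et07}) supplies $e^{-2\alpha t}\int_0^t \sigma_1(s)\|\bxi_s\|_1^2\,ds\le Kh^2$. The task therefore reduces to establishing the two corresponding bounds for the non-linear correction $\bta$.

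For the $\bta$ bound, I would test equation (\ref{bta0}) with $\bphi_h=e^{2\alpha t}\bta$. The kinetic term yields $\tfrac{1}{2}\tfrac{d}{dt}(e^{2\alpha t}\|\bta\|^2)-\alpha e^{2\alpha t}\|\bta\|^2$; the diffusive term produces the desired $\mu e^{2\alpha t}\|\bta\|_1^2$; and the memory integral is treated after the integration-by-parts reformulation (\ref{bta}) using Lemma~\ref{pp}. Since $b(\bu_h,\bta,\bta)=0$, the non-linear right-hand side collapses to $-b(\bu_h,\bxi,\bta)-b(\E,\bu,\bta)$, which Lemma~\ref{nonlin} controls by $\varepsilon\|\bta\|_1^2$ plus lower-order terms in $\|\bxi\|_1$, $\|\E\|$, $\|\bu_h\|_1$, and $\|\bu\|_2$. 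After kickback, time integration, (\ref{err.et04}), Lemma~\ref{est.uh}, Theorem~\ref{errest}, and the regularity of $\bu$ from Theorem~$3.2$ of \cite{GP11}, a Gronwall argument closes this half at the rate $K(t)h^2$.

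For the $\bta_s$ bound I would test the differentiated equation (\ref{btat}) with $\bphi_h=\sigma_1(t)\bta_t$. After integration in time, the left-hand side delivers $\sigma_1(t)\|\bta_t\|^2+2\mu\int_0^t\sigma_1\|\bta_s\|_1^2\,ds$, modulo the term $\int_0^t\sigma_{1,s}\|\bta_s\|^2\,ds$ (absorbed by (\ref{err.et12}) since $\sigma_{1,s}\le C\sigma(s)$) and the memory contribution (handled by the same by-parts manoeuvre and Lemma~\ref{pp}, together with the first half of the present lemma controlling $\int_0^t e^{2\alpha s}\|\bta\|_1^2\,ds$). The bulk of the work is then to dominate the four pieces of $\sigma_1(t)\Lambda_{1,h,t}(\bta_t)$ listed in (\ref{lamb1t}); writing $\E=\bxi+\bta$ whenever beneficial, each can be bounded via Lemma~\ref{nonlin} together with Lemmas~\ref{est.uh}--\ref{est1.uh} for $\bu_h,\bu_{h,t}$, the regularity estimates of \cite{GP11} for $\bu,\bu_t$, and the weaker-norm controls (\ref{err.et07})--(\ref{err.et08}) on $\bxi_s$ and (\ref{err.et12}) on $\bta_s$.

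The principal obstacle will be the pair of terms $b(\bu_{h,t},\E,\sigma_1\bta_t)$ and $b(\E,\bu_t,\sigma_1\bta_t)$: both $\|\bu_t\|_1$ and $\|\bu_{h,t}\|_1$ carry a boundary singularity as $t\to 0$ that is only barely compensated by the half-weight $\sigma_1^{1/2}=\tau^{*}e^{\alpha t}$, and it is crucial never to place $\E$ itself in the very $H^1$-norm that is being estimated. The splitting $\E=\bxi+\bta$, combined with the $K(t)h^4$ control of $\|\bta\|^2$ from Lemma~$5.4$ of \cite{GP11} and the $Kh^2$ control of $\|\bxi\|_1^2$ from (\ref{err.et04}), is the mechanism by which this circularity is avoided. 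Once every non-linear piece has been written as $\varepsilon\sigma_1\|\bta_s\|_1^2$ plus a contribution dominated by $K(t)h^2$, a final kickback and Gronwall step deliver the stated estimate.
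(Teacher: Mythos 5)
Your overall architecture coincides with the paper's: the same splitting $\E=\bxi+\bta$, the $\bxi$-halves of both estimates supplied by (\ref{err.et04}) and (\ref{err.et07}), the test function $e^{2\alpha t}\bta$ in (\ref{bta0}) for the first half, and $\sigma_1(t)\bta_t$ in (\ref{btat}) for the second. The first half of your argument is sound and matches the paper's steps (\ref{err.so01})--(\ref{err.so02a}).

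There is, however, one concrete gap in the second half. After integrating the $\sigma_1(t)\bta_t$ energy identity you must control $\int_0^t\sigma(s)\|\bta_s(s)\|^2\,ds$ (the term coming from $\sigma_{1,t}\le C\sigma$), and you propose to absorb it using (\ref{err.et12}). But (\ref{err.et12}), once closed with (\ref{err.et14}), yields $\int_0^t\sigma_1(s)\|\bta_s(s)\|^2\,ds\le K(t)h^4$ with the weight $\sigma_1(s)=(\tau^*(s))^2e^{2\alpha s}$, whereas the term you need carries the weight $\sigma(s)=\tau^*(s)e^{2\alpha s}$. Since $\sigma(s)/\sigma_1(s)=1/\tau^*(s)\to\infty$ as $s\to 0$, the available estimate does not dominate the one you need; under non-smooth initial data these weights are precisely not interchangeable near $t=0$. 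To close the argument you need a separate energy step: test (\ref{bta0}) with $\bphi_h=\sigma(t)\bta_t$, rewrite the memory integral by parts as in (\ref{int}), estimate $\Lambda_{1,h}(\bta_t)$ in a way that avoids $\|\bta_t\|_1$, and integrate in time; together with the already-established first half and (\ref{err.so02a}) this gives $\int_0^t\sigma(s)\|\bta_s(s)\|^2\,ds\le K(t)h^2$. This is exactly the step the paper inserts in (\ref{err.so06})--(\ref{err.so08}); without it your final kickback/Gronwall step does not close.
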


\begin{proof}
Note that $\bu-\bu_h=\E= \bxi-\bta$. And so, keeping in mind (\ref{err.et04}) and (\ref{err.et07}), we only need estimates of $\bta$. For that, we choose
$\bphi_h=e^{2\alpha t}\bta$ in (\ref{bta0}).
\begin{align}\label{err.so01}
\frac{1}{2}\frac{d}{dt}\{e^{2\alpha t}\|\bta\|^2\}+\mu e^{2\alpha t}\|\bta\|_1^2
+\int_0^t \beta(t-s) a(\bta(s), e^{2\alpha t}\bta)~ds =\alpha e^{2\alpha t}\|\bta\|^2
+e^{2\alpha t}\Lambda_{1,h}(\bta).
\end{align}
The non-linear term is estimated, similar to (\ref{err.et11}).
\begin{align}\label{err.so02}
\Lambda_{1,h}(\bta)= -b(\bxi+\bta,\bu_h,\bta)-b(\bu,\bxi,\bta) \le \varepsilon \|\bta\|_1^2 +K(\|\bxi\|_1^2+\|\bta\|^2),
\end{align}
for some $\varepsilon>0$. Incorporate (\ref{err.et02}) in (\ref{err.so01}) and integrate. Drop the double integral term, following Lemma \ref{pp}.
\begin{align*}
e^{2\alpha t}\|\bta\|^2+\int_0^t e^{2\alpha s}\|\bta(s)\|_1^2 ds \le K\int_0^t e^{2\alpha s}
\|\bta(s)\|^2 ds+K\int_0^t e^{2\alpha s} \|\bxi(s)\|_1^2 ds.
\end{align*}
Use (\ref{err.et04}) and then apply Gronwall's Lemma to find
\begin{align}\label{err.so02a}
\|\bta\|^2+e^{-2\alpha t}\int_0^t e^{2\alpha s}\|\bta(s)\|_1^2 ds \le K(t)h^2.
\end{align}
This along with (\ref{err.et04}) completes the first part of (\ref{err.so1}). \\
Next, we choose $\bphi_h= \sigma_1(t)\bta_t$ in (\ref{btat}) to obtain as in (\ref{btat01})
\begin{align}\label{err.so03}
\frac{1}{2}\frac{d}{dt}\big\{\sigma_1(t)\|\bta_t\|^2\}+\mu\sigma_1(t)\|\bta_t\|_1^2 \le
C\sigma(t)\|\bta_t\|^2+C\sigma_1(t)\|\bta\|_1\|\bta_t\|_1 \nonumber \\
+\sigma_1(t)\|\bta_t\|_1\int_0^t \beta_t(t-s) \|\bta(s)\|_1~ds+\Lambda_{1,h,t}(\sigma_1(t)
\bta_t).
\end{align}
For the non-linear term $\Lambda_{1,h,t}$, we recall (\ref{err.et09}), except now, $(-\td_h)^{-1}\bta_t$ is replaced by $\bta_t$. Terms not involving $E_t$ can be bounded as:
(using Theorem \ref{errest})
\begin{align*}
\le C\big(\|\bu_{h,t}\|_1+\|\bu_t\|_1\big)\|\E\|_1\|\bta_t\|_1 \le K(t)(\tau^*(t))^{-1/2}h
\big(\|\bu_{h,t}\|_1+\|\bu_t\|_1\big)\|\bta_t\|_1.
\end{align*}
And for terms involving $E_t$, we rewrite it as $\bxi_t+\bta_t$ and bound those terms as
\begin{align*}
\le C\|\bu_h\|_1(\|\bxi_t\|_1+\|\bta_t\|_1)\|\bta_t\|^{1/2}\|\bta_t\|_1^{1/2}
+C\|\bu_h\|_1\|\bta_t\|_1(\|\bxi_t\|_1+\|\bta_t\|^{1/2}\|\bta_t\|_1^{1/2}) \\
+C(\|\bxi_t\|_1+\|\bta_t\|^{1/2}\|\bta_t\|_1^{1/2})\|\bu\|_1\|\bta_t\|^{1/2}\|\bta_t\|_1^{1/2}
+C(\|\bxi_t\|_1+\|\bta_t\|^{1/2}\|\bta_t\|_1^{1/2})\|\bta_t\|_1\|\bu\|_1 \\
\le K\big(\|\bxi_t\|_1\|\bta_t\|_1+\|\bta_t\|\|\bta_t\|_1
+\|\bta_t\|^{1/2}\|\bta_t\|_1^{3/2}\big).
\end{align*}
Use Lemma \ref{est.uh} and Lemma $3.4$ from \cite{GP11} to bound $\|\bu_h\|_1$ and $\|\bu\|_1$. Now, incorporate the above two estimates in $\Lambda_{1,h,t}(\bta_t)$ and then
in (\ref{err.so03}). Use Young's inequality and kickback argument to find
\begin{align*}
\frac{d}{dt}\big\{\sigma_1(t)\|\bta_t\|^2\}+\mu\sigma_1(t)\|\bta_t\|_1^2 \le
C\sigma(t)\|\bta_t\|^2+C\sigma_1(t)\|\bta\|_1^2+K(t)h^2 \nonumber \\
+K\sigma_1(t)\big(\|\bxi_t\|_1^2+\|\bta_t\|^2\big)+K(t)h^2\sigma(t)\big(\|\bu_{h,t}\|_1^2 +\|\bu_t\|_1^2\big).
\end{align*}
Integrate with respect to time to get
\begin{align}\label{err.so04}
\sigma_1(t)\|\bta_t\|^2+\mu\int_0^t \sigma_1(s)\|\bta_s(s)\|_1^2 ds \le K\int_0^t \sigma(s)
\|\bta_s(s)\|^2~ds+C\int_0^t \sigma_1(s)(\|\bta(s)\|_1^2+\|\bxi_s(s)\|_1^2)~ ds \nonumber \\
+K(t)h^2+K(t)h^2\int_0^t \sigma(s)\big(\|\bu_{h,s}(s)\|_1^2++\|\bu_s(s)\|_1^2\big)~ds
\end{align}
Use Lemma \ref{est1.uh} and Theorem $3.2$ from \cite{GP11} to bound the last the term of (\ref{err.so04}). Next, we use (\ref{err.et07}) and (\ref{err.so02a}) to infer from (\ref{err.so04})
\begin{align}\label{err.so05}
\sigma_1(t)\|\bta_t\|^2+\mu\int_0^t \sigma_1(s)\|\bta_s(s)\|_1^2 ds \le K\int_0^t \sigma(s)
\|\bta_s(s)\|^2~ds+K(t)h^2.
\end{align}
To estimate the term on the right-hand side of (\ref{err.so05}), we put $\bphi_h=\sigma(t)
\bta_t$ in (\ref{bta0}).
\begin{align}\label{err.so06}
\sigma(t)\|\bta_t\|^2+\frac{\mu}{2}\frac{d}{dt}\big\{\sigma(t)\|\bta\|_1^2\big\}= 
\frac{1}{2}\sigma_t(t)\|\bta\|_1^2-\sigma(t)\int_0^t \beta(t-s) a(\bta(s),\bta_t)~ds +\sigma(t) \Lambda_{1,h}(\bta_t).
\end{align}
We first rewrite the integral term as:
\begin{align}\label{int}
\sigma(t)\int_0^t \beta(t-s) a(\bta(s),\bta_t)~ds= a(\bta_t, \sigma(t)\int_0^t \beta(t-s)
\bta(s)~ds) \nonumber \\
=\frac{d}{dt}\big[a(\bta, \sigma(t)\int_0^t \beta(t-s)\bta(s)~ds)\big]-a(\bta, \sigma_t(t)
\int_0^t \beta(t-s)\bta(s)~ds) \nonumber \\
-a(\bta, \sigma(t)\beta(0)\bta)-a(\bta, \sigma(t)\int_0^t \beta_t(t-s)\bta(s)~ds).
\end{align}
We estimate the non-linear term as (following the idea (\ref{nonlin02}) to avoid $\|\bta_t\|_1$)
$$ \Lambda_{1,h}(\bta_t) \le K\|\bta_t\|\|\E\|_1(\|\bu_h\|_2^{1/2}+\|\bu\|_2^{1/2}). $$
Incorporate this along with (\ref{int}) in (\ref{err.so06}). Use Young's inequality and kickback argument. Integrate with respect to time to find
\begin{align}\label{err.so07}
\sigma(t)\|\bta\|_1^2+\int_0^t \sigma(s)\|\bta_s(s)\|^2 ds \le C\int_0^t e^{2\alpha s}
\|\bta(s)\|_1^2 ds+K\int_0^t e^{2\alpha s} \|\E(s)\|_1^2 ds.
\end{align}
Use the first part of (\ref{err.so1}) and (\ref{err.so02a}) to bound the left-hand side
of (\ref{err.so07}) by $K(t)h^2$. Incorporate this in (\ref{err.so07}) and then in 
(\ref{err.so05}) to complete the rest of the proof.
\end{proof}

\section{Two-Level Finite Element Method}
\se

In this section, we work with another space discretizing parameter $H$, that corresponds to a coarse mesh. In other words, $0<h<H$ and both $h,H$ tend to $0$. We introduce associated conforming finite element spaces $(\bH_H,L_H)$ and $(\bH_h,L_h)$ such that $(\bH_H,L_H) \subset (\bH_h,L_h)$. And this two-level finite element is to find a pair $(\bu^h,p^h)$ as follows:

{\bf First Level}: We compute the mixed finite element approximation $(\bu_H,p_H)\in (\bH_H,L_H)$ of $(\bu,p)$ of (\ref{wfh}). In other words, we solve the nonlinear problem on a coarse mesh. Find $(\bu_H,p_H)\in (H_H,p_H)$ satisfying
\begin{align}\label{2lvlH1}
(\bu_{Ht}, \bphi_H) +\mu a (\bu_H,\bphi_H)+& b(\bu_H,\bu_H,\bphi_H)+ \int_0^t \beta(t-s) a(\bu_H(s),\bphi_H)~ds -(p_H, \nabla \cdot \bphi_H) =(\f, \bphi_H), \nonumber \\
&(\nabla \cdot \bu_H, \chi_H) =0,
\end{align}
for $(\bphi_H, \chi_H)\in (\bH_H, L_H)$.

{\bf Second Level}: We solve a linearized problem on a fine mesh. Find $(\bu^h,p^h) \in (\bH_h,L_h)$ satisfying
\begin{align}\label{2lvlH2}
(\bu^h_t, \bphi_h) +\mu a (\bu^h,\bphi_h)+& b(\bu_H,\bu_H,\bphi_h)+ \int_0^t \beta(t-s) a(\bu^h(s),\bphi_h)~ds -(p^h, \nabla \cdot \bphi_h) =(\f, \bphi_h), \nonumber \\
&(\nabla \cdot \bu^h, \chi_h) =0,
\end{align}
for $(\bphi_h, \chi_h)\in (\bH_h, L_h)$.

\noindent An equivalent way is look for solution in a weekly divergent free space.

{\bf First Level}: Find $\bu_H \in \bJ_H$ satisfying
\begin{align}\label{2lvlJ1}
(\bu_{Ht}, \bphi_H) +\mu a (\bu_H,\bphi_H)+ b(\bu_H,\bu_H,\bphi_H)+ \int_0^t \beta(t-s) a(\bu_H(s),\bphi_H)~ds =(\f, \bphi_H),
\end{align}
for $\bphi_H \in \bJ_H$.

{\bf Second Level}: 
With $\bu_H$ being the solution of (\ref{2lvlJ1}), find $\bu^h \in \bJ_h$ satisfying
\begin{align}\label{2lvlJ2}
(\bu^h_t, \bphi_h) +\mu a (\bu^h,\bphi_h)+ b(\bu_H,\bu_H,\bphi_h)+ \int_0^t \beta(t-s) a(\bu^h(s),\bphi_h)~ds =(\f, \bphi_h),
\end{align}
for $\bphi_h \in \bJ_h$.

\begin{remark}
The well-posedness of the above systems can be seen from the facts that (\ref{2lvlH1}) or (\ref{2lvlJ1}) is Galerkin finite element approximation and hence is well-posed as is stated in Section $3$. And (\ref{2lvlH2}) or (\ref{2lvlJ2}) represent linearized version. Given $\bu_H$ and with suitable $\bu^h(0)$, it is therefore an easy task to show the existence of an unique solution pair $(\bu^h,p^h)$ (or an unique solution $\bu^h$) for the linearized problem following the foot-steps of the non-linear problem.
\end{remark}
\noindent
Following Lemma \ref{est.uh}, we can easily obtain the {\it a priori} estimates of $\bu_H$.
\begin{lemma}\label{est.uH}
Under the assumptions of Lemma \ref{est.uh} and for $\bu_H(0)=P_H\bu_0$, the solution 
$\bu_H$ of (\ref{2lvlJ1}) satisfies, for $t>0,$
\begin{eqnarray}
\|\bu_H(t)\|+e^{-2\alpha t}\int_0^t e^{2\alpha s}\|\bu_H(t)\|_1^2~ds \le K, \label{uH01} \\
\|\bu_H(t)\|_1+e^{-2\alpha t}\int_0^t e^{2\alpha s}\|\bu_H(t)\|_2^2~ds \le K, \label{uH02} \\
(\tau^*(t))^{1/2}\|\bu_H(t)\|_2 \le K, \label{uH03}
\end{eqnarray}
where $\tau^*(t)= \min \{1,t\}$ and $K$ depends only on the given data. In particular, $K$ is independent of $H$ and $t$.
\end{lemma}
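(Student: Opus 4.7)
The plan is to observe that the coarse-grid system (\ref{2lvlJ1}) is structurally identical to the standard Galerkin scheme (\ref{dwfj}): it is the same finite element approximation of (\ref{wfj}), just posed on a different mesh. Since the constant $K$ appearing in Lemma \ref{est.uh} depends only on the given data ($\bu_0$, $\f$, $\mu$, $\gamma$, $\delta$, $\Omega$) and is in particular \emph{independent of the mesh parameter}, every estimate of Lemma \ref{est.uh} transfers verbatim upon the substitution $h \mapsto H$. So the proof is a reference to the proof of Lemma \ref{est.uh} with the discretization parameter renamed.

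For completeness, the mechanism is the following: the initial data $\bu_H(0)=P_H\bu_0$ inherits from assumption (${\bf A2}$) the bounds $\|P_H\bu_0\|\le\|\bu_0\|\le M_0$ and $\|P_H\bu_0\|_1\le C\|\bu_0\|_1\le CM_0$ (by the $\bH^1$-stability of the $L^2$-projection on $\bJ_H$). Testing (\ref{2lvlJ1}) with $\bphi_H = e^{2\alpha t}\bu_H$, using the antisymmetry $b(\bu_H,\bu_H,\bu_H)=0$, the positivity of the memory kernel (Lemma \ref{pp}) to drop the double-integral term, Poincar\'e's inequality with the choice $\alpha<\mu\lambda_1/2$, and Gronwall's lemma yields (\ref{uH01}). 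Testing with $\bphi_H = e^{2\alpha t}(-\td_H)\bu_H$ (available since $\bu_H\in\bJ_H$ and $-\td_H$ is self-adjoint positive on $\bJ_H$), handling the nonlinear term via Lemma \ref{nonlin} with a kick-back argument, and combining with (\ref{uH01}) gives (\ref{uH02}).

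For the singular weighted estimate (\ref{uH03}), I would multiply the equation obtained by testing with $(-\td_H)\bu_H$ by the weight $\tau^*(t)$ (which vanishes at $t=0$ and equals $1$ for $t\ge 1$) to absorb the $O(t^{-1/2})$ blow-up of $\|\bu_H\|_2$ at the initial time caused by the non-smooth initial data $\bu_0\in\bJ_1\setminus\bH^2$. After integrating by parts in $t$ to move the derivative of $\tau^*$ onto the energy term and using (\ref{uH02}) to control the extra $\tau^*_t\|\bu_H\|_1^2$ term, one arrives at $\tau^*(t)\|\bu_H(t)\|_2^2\le K$, which is (\ref{uH03}). This is exactly the sequence of steps carried out in Lemma 4.2 of \cite{GP11}.

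The only conceptual point worth emphasizing is independence of the constant from $H$: all bounds on the nonlinear term use Ladyzhenskaya-type inequalities (Lemma \ref{nonlin}) that are \emph{mesh-independent} on $\bH_h$ (and hence on $\bH_H$), and the inverse hypothesis (\ref{inv.hypo}) is never invoked at a place where it would introduce a factor of $H^{-1}$ in the final bound. Consequently there is no obstacle beyond bookkeeping; the only caveat is that (\ref{2lvlJ1}) does not include an equation for the pressure, so the bounds on $p_H$ listed in Lemma \ref{est.uh} (which do appear in its proof via the inf-sup condition (${\bf B2'}$)) simply drop out of the statement of Lemma \ref{est.uH} and are not needed here.
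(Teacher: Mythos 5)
Your proposal is correct and coincides with the paper's own treatment: the paper proves Lemma \ref{est.uH} precisely by observing that (\ref{2lvlJ1}) is the standard Galerkin scheme on the coarse mesh and invoking Lemma \ref{est.uh} (i.e.\ Lemma 4.2 of \cite{GP11}) with $h$ replaced by $H$, the constant there being mesh-independent. Your completeness sketch (testing with $e^{2\alpha t}\bu_H$ and with the discrete Laplacian, plus the $\tau^*$-weighting for (\ref{uH03})) matches the underlying argument, and your remark that the pressure bound of (\ref{uh03}) simply drops from the statement is consistent with the paper.
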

\noindent The following higher-order estimate of $\bu_H$ is required for error analysis. The proof of the same is similar to that of Lemma \ref{est1.uh}.
\begin{lemma}\label{est1.uH}
Under the assumptions of Lemma \ref{est.uH}, the solution $\bu_H$ of (\ref{2lvlJ1}) satisfies, for $t>0,$
\begin{equation}\label{1uH01}
(\tau^*(t))^2\|\bu_{H,t}(t)\|_1^2+(\tau^*(t))^3\|\bu_{H,t}(t)\|_2^2+e^{-2\alpha t}\int_0^t
e^{2\alpha s}(\tau^*(s))\|\bu_{H,s}(s)\|_1^2 ds \le K.
\end{equation}
\end{lemma}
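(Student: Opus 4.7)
The proof mirrors the template of Lemma \ref{est1.uh}, with a discrete Stokes-regularity step added to reach the pointwise bound on $\|\bu_{H,t}\|_2$, which lies outside the range $r+i\le 2$ covered there. Differentiating (\ref{2lvlJ1}) once in $t$ and using $\beta_t=-\delta\beta$ with $\beta(0)=\gamma$, the equation governing $\bw_H:=\bu_{H,t}$ becomes
\begin{align*}
(\bw_{H,t},\bphi_H)+\mu a(\bw_H,\bphi_H)+\gamma a(\bu_H,\bphi_H)-\delta\int_0^t\beta(t-s)a(\bu_H(s),\bphi_H)\,ds +b(\bw_H,\bu_H,\bphi_H)+b(\bu_H,\bw_H,\bphi_H)=(\f_t,\bphi_H).
\end{align*}
To obtain the third summand of the claim I test with $\bphi_H=\tau^*(t)e^{2\alpha t}\bw_H$: the antisymmetry $b(\bu_H,\bw_H,\bw_H)=0$ kills one convective term, the remaining one is dominated by $C\|\bu_H\|_1\|\bw_H\|\|\bw_H\|_1$ via Lemma \ref{nonlin}, the memory convolution is handled exactly as in \cite[(4.6)]{PY05}, and Young's inequality together with Gronwall and Lemma \ref{est.uH} then yield both the desired integral bound and the byproduct $\tau^*(t)\|\bw_H\|^2\le K$. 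The first summand is obtained analogously by testing with $\bphi_H=(\tau^*(t))^2 e^{2\alpha t}(-\td_H\bw_H)\in\bJ_H$, using $(\bw_{H,t},-\td_H\bw_H)=\tfrac12\tfrac{d}{dt}\|\bw_H\|_1^2$ and $a(\bw_H,-\td_H\bw_H)=\|\bw_H\|_2^2$; this also produces as byproduct $\int_0^t(\tau^*(s))^2 e^{2\alpha s}\|\bw_H(s)\|_2^2\,ds\le Ke^{2\alpha t}$. Both multipliers vanish at $t=0$, so no initial-data condition on $\bu_{H,t}(0)$ is required.

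For the new pointwise estimate $(\tau^*(t))^3\|\bu_{H,t}(t)\|_2^2\le K$, I view the displayed differentiated equation as a discrete stationary Stokes problem on $\bJ_H$,
\begin{align*}
\mu(-\td_H)\bw_H = P_H\f_t - \bw_{H,t} -\gamma(-\td_H)\bu_H+\delta\int_0^t\beta(t-s)(-\td_H)\bu_H(s)\,ds -P_H\bigl[(\bw_H\cdot\nabla)\bu_H+(\bu_H\cdot\nabla)\bw_H\bigr],
\end{align*}
take $\bL^2$ norms on both sides, and invoke the equivalence $\|\bv_H\|_2\sim\|\td_H\bv_H\|$ on $\bJ_H$. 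After multiplication by $(\tau^*(t))^{3/2}$, Lemma \ref{est.uH}, the outcomes of the previous paragraph, and the Ladyzhenskaya-type bound $\|(\bw_H\cdot\nabla)\bu_H\|\le C\|\bw_H\|^{1/2}\|\bw_H\|_1^{1/2}\|\bu_H\|_1^{1/2}\|\bu_H\|_2^{1/2}$ (and its partner for the other convective contribution) control every term on the right except the pointwise quantity $(\tau^*(t))^{3/2}\|\bu_{H,tt}(t)\|$.

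The main obstacle is therefore the separate pointwise bound $(\tau^*(t))^3\|\bu_{H,tt}(t)\|^2\le K$. Following the closing step of Lemma \ref{est1.uh}, I would obtain it by differentiating (\ref{2lvlJ1}) twice in $t$ and testing the resulting equation with $\bphi_H=(\tau^*(t))^3 e^{2\alpha t}\bu_{H,tt}$. The convective contribution $b(\bu_H,\bu_{H,tt},\bu_{H,tt})$ vanishes by antisymmetry; the remaining new contributions $b(\bu_{H,tt},\bu_H,\bu_{H,tt})$ and $2b(\bu_{H,t},\bu_{H,t},\bu_{H,tt})$ are estimated via Lemma \ref{nonlin}, using Lemma \ref{est.uH} and the $(\tau^*)^2\|\bu_{H,t}\|_1^2\le K$ bound already established, with Young's inequality absorbing the dangerous $\|\bu_{H,tt}\|_1$-leading factors. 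The double-memory term is discarded via the positivity Lemma \ref{pp}, and the driving contribution $\int_0^t\bigl((\tau^*(s))^3 e^{2\alpha s}\bigr)_s\|\bu_{H,ss}\|^2\,ds$, forced by the time-dependent weight, is controlled by the coarse-grid analogue of (\ref{1uh02}), derivable by the same Heywood--Rannacher-type argument as in \cite[Theorem 3.2]{GP11}. Combining the three steps then yields all three terms of the claim.
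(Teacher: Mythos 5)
Your proposal is correct and follows essentially the same route the paper intends: the paper's own ``proof'' is only a pointer to Lemma \ref{est1.uh}, whose proof in turn is a two-sentence sketch (differentiate the equation in time, test with $\tau^*e^{2\alpha t}\bu_{H,t}$, $(\tau^*)^2e^{2\alpha t}(-\td_H\bu_{H,t})$ and, for the twice-differentiated equation, $(\tau^*)^3e^{2\alpha t}\bu_{H,tt}$, following \cite[Theorem 3.2]{GP11}), and your write-up fleshes out exactly these weighted Heywood--Rannacher-type energy estimates together with the discrete elliptic-regularity step for the $\|\cdot\|_2$ bound. Your explicit identification of the needed auxiliary ingredients --- the coarse-grid analogue of (\ref{1uh02}) to absorb the weight-derivative term, and the vanishing of the multipliers at $t=0$ to avoid compatibility conditions on $\bu_{H,t}(0)$ --- is consistent with, and more detailed than, what the paper records.
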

\noindent 
Below, we deal with the {\it a priori} estimates of $\bu^h$.
\begin{lemma}\label{est.2uh}
Under the assumptions of Lemma \ref{est.uH}, the solution $\bu^h$ of (\ref{2lvlJ2}) satisfies, for $t>0,$
\begin{eqnarray}
\|\bu^h(t)\|+e^{-2\alpha t}\int_0^t e^{2\alpha s}\|\bu^h(t)\|_1^2~ds \le K, \label{2uh01} \\
\|\bu^h(t)\|_1+e^{-2\alpha t}\int_0^t e^{2\alpha s}\|\bu^h(t)\|_2^2~ds \le K, \label{2uh02}
\end{eqnarray}
\end{lemma}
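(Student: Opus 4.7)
The proof of both bounds is a direct energy argument on (\ref{2lvlJ2}); because the convective term is linear in $\bu^h$ and involves only the already-controlled coarse velocity $\bu_H$, the analysis is considerably simpler than that of Lemma \ref{est.uh}.

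For (\ref{2uh01}), I would test (\ref{2lvlJ2}) with $\bphi_h = e^{2\alpha t}\bu^h$. Rewriting $e^{2\alpha t}\beta(t-s) = \gamma e^{-(\delta-\alpha)(t-s)}e^{\alpha(t+s)}$ and invoking Lemma \ref{pp} (valid since $\alpha<\delta$ by hypothesis), the memory term has nonnegative time integral and is dropped. The convective term is bounded via Lemma \ref{nonlin}(iii) and the uniform $L^2$--$H^1$ bounds on $\bu_H$ from Lemma \ref{est.uH}, giving
$$|b(\bu_H,\bu_H,\bu^h)| \le C\|\bu_H\|^{1/2}\|\bu_H\|_1^{3/2}\|\bu^h\|^{1/2}\|\bu^h\|_1^{1/2} \le \tfrac{\mu}{2}\|\bu^h\|_1^2 + K\|\bu^h\|^2 + K$$
after Young's inequality, while the forcing is handled by Cauchy--Schwarz. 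Time integration followed by Gronwall's lemma then yields (\ref{2uh01}).

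For (\ref{2uh02}), test with $\bphi_h = e^{2\alpha t}\td_h\bu^h\in\bJ_h$; the discrete identities $(\bu^h_t,\td_h\bu^h) = a(\bu^h_t,\bu^h) = \tfrac{1}{2}\tfrac{d}{dt}\|\bu^h\|_1^2$ and $a(\bu^h,\td_h\bu^h) = \|\bu^h\|_2^2$ (both valid because $\bu^h$ and $\bu^h_t$ lie in $\bJ_h$) reshape the left-hand side as required, and the memory term is again discarded by Lemma \ref{pp}. The convective term demands some care: the antisymmetric half $-\tfrac{1}{2}(\bu_H\cdot\nabla\td_h\bu^h,\bu_H)$ contains an uncontrollable factor $\|\nabla\td_h\bu^h\|$, so I integrate that half by parts (permissible since $\td_h\bu^h\in\bH_0^1$) and obtain the identity
$$b(\bu_H,\bu_H,\td_h\bu^h) = (\bu_H\cdot\nabla\bu_H,\td_h\bu^h) + \tfrac{1}{2}((\nabla\cdot\bu_H)\bu_H,\td_h\bu^h).$$
Using the Agmon estimate $\|\bu_H\|_{L^\infty}\le C\|\bu_H\|_1^{1/2}\|\bu_H\|_2^{1/2}$ together with Lemma \ref{est.uH}, each piece is bounded by $K(\tau^*(t))^{-1/4}\|\bu^h\|_2$, which Young's inequality splits as $\tfrac{\mu}{2}\|\bu^h\|_2^2 + K(\tau^*(t))^{-1/2}$. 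Because $\int_0^t e^{2\alpha s}(\tau^*(s))^{-1/2}ds$ is uniformly finite in $t$, a second Gronwall argument delivers (\ref{2uh02}).

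The main technical obstacle is the barely-integrable $(\tau^*(t))^{-1/2}$ singularity in the convective estimate for the $H^1$-energy argument; this reflects the non-smooth-data loss of regularity that forces $\|\bu_H\|_2\sim t^{-1/2}$ near $t=0$. Smoother initial data would replace this by a bounded factor, but integrability at the origin is precisely what is needed for Gronwall to close; had the singularity been a full $(\tau^*(t))^{-1}$, the right-hand side of (\ref{2uh02}) would fail to remain bounded as $t\to 0$.
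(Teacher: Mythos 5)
Your proposal is correct and follows essentially the same route as the paper: test with $e^{2\alpha t}\bu^h$ and $e^{2\alpha t}\td_h\bu^h$, drop the memory term by Lemma \ref{pp}, and integrate by parts in the antisymmetric half of $b(\bu_H,\bu_H,\td_h\bu^h)$ exactly as in the paper's (\ref{2uh006}). The only (harmless) difference is that for (\ref{2uh02}) the paper keeps $\|\bu_H\|_2^2$ under the time integral and invokes the integrated bound (\ref{uH02}), whereas you substitute the pointwise bound $\|\bu_H\|_2\le K(\tau^*(t))^{-1/2}$ and integrate the resulting weighted singularity, which also closes (note only that what is uniformly bounded is $e^{-2\alpha t}\int_0^t e^{2\alpha s}(\tau^*(s))^{-1/2}\,ds$, not the unweighted integral).
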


\begin{proof}
Given $\bu_H$ with Lemma (\ref{est.uH}), we choose $\bphi_h= e^{2\alpha t}\bu^h(t)= e^{\alpha t} \hbu^h(t)$ in (\ref{2lvlJ2}) to obtain
\begin{align}\label{2uh001}
\frac{1}{2}\frac{d}{dt}\|\hbu^h\|^2-\alpha\|\hbu^h\|^2+\mu\|\hbu^h\|_1^2+\gamma\int_0^t e^{-(\delta-\alpha)(t-s)} a(\hbu^h(s),\hbu^h)~ds \nonumber \\
=(\hf,\hbu^h)-e^{2\alpha t} b(\bu_H,\bu_H,\bu^h).
\end{align}
Use Cauchy-Schwarz inequality, Poincar\'e inequality with first eigenvalue of Stokes operator as the constant and Young's inequality to have
\begin{align}\label{2uh002}
(\hf,\hbu^h) \le \|\hf\|\|\hbu^h\| \le \frac{1}{\lambda_1^{1/2}}\|\hf\|\|\hbu^h\|_1 
\le \frac{\mu}{4}\|\hbu^h\|_1^2+\frac{1}{\mu\lambda_1}\|\hf\|^2.
\end{align}
From Lemma \ref{nonlin} and Lemma \ref{est.uH}, we find that
\begin{align*}
b(\bu_H,\bu_H,\bu^h) & \le \|\bu_H\|^{1/2}\|\bu_H\|_1^{3/2}\|\bu_h\|^{1/2}\|\bu_h\|_1^{1/2}
+\|\bu_H\|\|\bu_H\|_1\|\bu_h\|_1 \nonumber \\
& \le K+\frac{\mu}{4}\|\bu^h\|_1^2
\end{align*}
Therefore,
\begin{equation}\label{2uh003}
e^{2\alpha t} b(\bu_H,\bu_H,\bu^h) \le Ke^{2\alpha t}+\frac{\mu}{4}\|\hbu^h\|_1^2.
\end{equation}
Putting the estimates (\ref{2uh002})-(\ref{2uh003}) in (\ref{2uh001}) gives us
\begin{align*}
\frac{d}{dt}\|\hbu^h\|^2-2\alpha\|\hbu^h\|^2+\mu\|\hbu^h\|_1^2+2\gamma\int_0^t e^{-(\delta-\alpha)(t-s)} a(\hbu^h(s),\hbu^h)~ds \le Ke^{2\alpha t}+\frac{1}{\mu\lambda_1} \|\hf\|^2.
\end{align*}
Use Poincar\'e inequality to get
\begin{align}\label{2uh004}
\frac{d}{dt}\|\hbu^h\|^2+\big(\mu-\frac{2\alpha}{\lambda_1}\big)\|\hbu^h\|_1^2+2\gamma
\int_0^t e^{-(\delta-\alpha)(t-s)} a(\hbu^h(s),\hbu^h)~ds \le Ke^{2\alpha t} +\frac{1}{\mu\lambda_1} \|\hf\|^2.
\end{align}
Since $0<\alpha< \min\{\mu\lambda_1/2,\delta\}$, we have that $\mu-2\alpha/\lambda_1>0$. \\
Integrate (\ref{2uh004}) with respect to time and use the positivity of the kernel $\beta$, that is, Lemma \ref{pp}, to drop the resulting double integral. This results in
\begin{align*}
\|\hbu^h(t)\|^2+\big(\mu-\frac{2\alpha}{\lambda_1}\big)\int_0^t \|\hbu^h(s)\|_1^2 ds
\le Ke^{2\alpha t}.
\end{align*}
Multiply by $e^{-2\alpha t}$ to conclude (\ref{2uh01}). \\
For the next estimate, we choose $\bphi_h= e^{2\alpha t}\td_h\bu^h(t)$ in (\ref{2lvlJ2}) and proceed as above. For the non-linear term, by the definition of the operator b, we have
$$ b(\bu_H,\bu_H,\td_h\bu^h)=\frac{1}{2}((\bu_H\cdot\nabla)\bu_H,\td_h\bu^h)-\frac{1}{2}
((\bu_H\cdot\nabla)\td_h\bu^h,\bu_H). $$
Use Lemma \ref{nonlin} to obtain
\begin{align}\label{2uh005}
\frac{1}{2}((\bu_H\cdot\nabla)\bu_H,\td_h\bu^h) \le C\|\bu_H\|^{1/2}\|\bu_H\|_2^{1/2}
\|\bu_H\|_1\|\bu^h\|_2.
\end{align}
Following the idea (\ref{nonlin02}), we have
\begin{align}\label{2uh006}
((\bu_H\cdot\nabla)\td_h\bu^h,\bu_H)
&= -((\nabla\cdot\bu_H)\bu_H,\td_h\bu^h)-((\bu_H\cdot\nabla)\bu_H,\td_h\bu^h) \nonumber \\
&\le C\|\bu_H\|^{1/2}\|\bu_H\|_2^{1/2}\|\bu_H\|_1\|\bu^h\|_2.
\end{align}
These estimates let us have, with $\bphi_h= e^{2\alpha t}\td_h\bu^h(t)$ in (\ref{2lvlJ2}):
\begin{align}\label{2uh007}
\frac{1}{2}\frac{d}{dt}\|\hbu^h\|_1^2-\alpha\|\hbu^h\|_1^2+\mu\|\hbu^h\|_2^2+\gamma\int_0^t e^{-(\delta-\alpha)(t-s)} (\td_h\hbu^h(s),\td_h\hbu^h)~ds \nonumber \\
\le \|\hf\|\|\hbu^h\|_2+Ce^{\alpha t} \|\bu_H\|^{1/2}\|\bu_H\|_2^{1/2}\|\bu_H\|_1\|\hbu^h\|_2.
\end{align}
Using Young's inequality and Poincar\'e inequality as earlier, we find that
\begin{align}\label{2uh008}
\frac{d}{dt}\|\hbu^h\|_1^2+\big(\mu-\frac{\alpha}{2\lambda_1}\big)\|\hbu^h\|_2^2+2\gamma
\int_0^t e^{-(\delta-\alpha)(t-s)} (\td_h\hbu^h(s),\td_h\hbu^h)~ds \nonumber \\
\le C\|\hf\|^2+ce^{2\alpha t}\big\{\|\bu_H\|^2\|\bu_H\|_1^4+\|\bu_H\|_2^2\big\}.
\end{align}
Integrate (\ref{2uh008}) with respect to time, use Lemma \ref{est.uH} and finally multiply by $e^{-2\alpha t}$ to obtain (\ref{2uh02}).
\end{proof}

\section{Error Estimate}
\se

In this section, we present the error estimate for the spatial approximation, that is, two-level finite element Galerkin approximation. We achieve our desired results through a series of Lemmas. We denote the error, due to two-level method, as $\e=\bu_h-\bu^h$.

\noindent
From the equations (\ref{dwfj}) and (\ref{2lvlJ2}), we have the following error equation:
\begin{align}\label{err}
(\e_t,\bphi_h) +\mu a (\e,\bphi_h)+ \int_0^t \beta(t-s) a(\e(s), \bphi_h)~ds
= \Lambda_h(\bphi_h)~~\forall \bphi_h \in \bJ_h,
\end{align}
where
\begin{align}\label{lamb}
\Lambda_h(\bphi_h) & = b(\bu_H,\bu_H,\bphi_h)-b( \bu_h, \bu_h, \bphi_h) \nonumber \\
&= b(\bu_H,\bu_H-\bu_h,\bphi_h)+b(\bu_H-\bu_h,\bu_h,\bphi_h).
\end{align}

\begin{lemma}\label{neg} 
Under the assumptions of Theorem \ref{errest} and with the additional assumption that $\bu^h(0)=\bu_h(0)$ and for $0<\alpha< \min \{\mu\lambda_1/2,\delta\}$, the following estimate
\begin{equation}\label{neg1}
\|\e(t)\|_{-1}^2+e^{-2\alpha t}\int_0^t e^{2\alpha s}\|\e(s)\|^2 ds \le K(t)H^4
\end{equation}
holds, for $t>0$.
\end{lemma}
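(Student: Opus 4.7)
Since $\e\in\bJ_h$ and the target norm is $\|\cdot\|_{-1}$, I would test the error equation (\ref{err}) with $\bphi_h = e^{2\alpha t}(-\td_h)^{-1}\e$. Self-adjointness of $-\td_h$ on $\bJ_h$ gives the identities
\[
(\e_t,(-\td_h)^{-1}\e) = \tfrac{1}{2}\tfrac{d}{dt}\|\e\|_{-1}^2,\quad a(\e,(-\td_h)^{-1}\e) = \|\e\|^2,\quad a(\e(s),(-\td_h)^{-1}\e(t)) = (\e(s),\e(t)),
\]
so that inserting the test and rearranging yields
\begin{align*}
\tfrac{1}{2}\tfrac{d}{dt}\bigl(e^{2\alpha t}\|\e\|_{-1}^2\bigr)+\mu e^{2\alpha t}\|\e\|^2 \le \alpha e^{2\alpha t}\|\e\|_{-1}^2 + e^{2\alpha t}\Bigl|\int_0^t\beta(t-s)(\e(s),\e(t))\,ds\Bigr| + e^{2\alpha t}\bigl|\Lambda_h\bigl((-\td_h)^{-1}\e\bigr)\bigr|.
\end{align*}
The memory term is handled by Cauchy--Schwarz and Young, leaving a small multiple of $e^{2\alpha t}\|\e\|^2$ (absorbed by kickback) plus an exponentially decaying convolution of $e^{2\alpha s}\|\e(s)\|^2$ that, after integration in time, is controlled by the change-of-order argument used in (4.6) of \cite{PY05}.

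The crux is estimating $\Lambda_h((-\td_h)^{-1}\e) = b(\bu_H,\bu_H-\bu_h,(-\td_h)^{-1}\e) + b(\bu_H-\bu_h,\bu_h,(-\td_h)^{-1}\e)$. I would use the antisymmetry $b(\bv,\bw,\bphi) = -b(\bv,\bphi,\bw)$ together with the integration-by-parts rewriting (\ref{nonlin02}) to shift the gradient off $\bu_H-\bu_h$ and onto $(-\td_h)^{-1}\e$. Combined with Lemma \ref{nonlin}, the a priori bounds of Lemmas \ref{est.uh} and \ref{est.uH}, and the discrete Sobolev identities $\|(-\td_h)^{-1}\e\|_1 = \|\e\|_{-1}$ and $\|(-\td_h)^{-1}\e\|\le c\|\e\|_{-1}$, this produces an estimate of the schematic form
\[
\bigl|\Lambda_h\bigl((-\td_h)^{-1}\e\bigr)\bigr| \le \varepsilon\|\e\|^2 + C\|\e\|_{-1}^2 + K\psi(t)\,H^4,
\]
where the factor $H^4$ is extracted from the triangle inequality $\bu_H-\bu_h = (\bu_H-\bu) + (\bu-\bu_h)$ combined with the weighted bounds of Lemmas \ref{err.et} and \ref{err.so} applied at both mesh scales, and $\psi$ is time-integrable. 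The calculation mirrors the treatment of (\ref{err.et09})--(\ref{err.et11b}) in Lemma \ref{err.et}.

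Integrating the resulting differential inequality from $0$ to $t$, using $\e(0)=0$ (by the hypothesis $\bu^h(0)=\bu_h(0)$), and applying Gronwall's lemma to remove the $\|\e\|_{-1}^2$ contribution on the right then closes the argument. The main obstacle is the choice of the norm in which $\bu_H-\bu_h$ is measured inside the non-linear estimate: the pointwise $L^2$-bound $\|\bu_H-\bu_h\|(s)\le KH^2 e^{Ks}s^{-1/2}$ coming directly from Theorem \ref{errest} is not square-integrable in time near $s=0$, so a naive substitution would produce a divergent $\int_0^t s^{-1}\,ds$. Passing through discrete negative norms via the $(-\td_h)^{-1}$ test function and the skew-symmetric manipulation of the trilinear form is precisely what keeps the right-hand side integrable up to $t=0$ while preserving the clean $H^4$ rate.
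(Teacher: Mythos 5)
Your skeleton coincides with the paper's: test (\ref{err}) with $e^{2\alpha t}(-\td_h)^{-1}\e$, use self-adjointness of $\td_h$ to turn the leading terms into $\frac{d}{dt}\|\he\|_{-1}^2$ and $\mu\|\he\|^2$, bound $\Lambda_h((-\td_h)^{-1}\e)$ through the $L^2$-norm of $\bu_H-\bu_h$ via the rewriting (\ref{nonlin02}) and Lemma \ref{nonlin}, integrate in time, and close with Gronwall using $\e(0)=0$. Two supporting steps, however, are not right as written.

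First, the memory term. After Cauchy--Schwarz and Young, the convolution term produces, upon integration in time and change of the order of integration, a contribution of the form $C\gamma^2(\delta-\alpha)^{-2}\varepsilon^{-1}\int_0^t\|\he(s)\|^2\,ds$ whose constant is not small; it competes with the $\mu\int_0^t\|\he(s)\|^2\,ds$ you must keep on the left, and Gronwall's lemma cannot remove a term that multiplies $\|\he\|^2$ rather than the undifferentiated unknown $\|\he\|_{-1}^2$. The paper avoids this entirely: since $\delta-\alpha>0$, the time-integrated double integral is non-negative by the positivity of the kernel (Lemma \ref{pp}) and is simply dropped. The change-of-order device of $(4.6)$ in \cite{PY05} is invoked in this paper only for terms carrying $\beta_t$; it does not close the estimate here.

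Second, the source of the $H^4$. You correctly observe that the pointwise bound $\|(\bu_H-\bu_h)(s)\|\le K H^2 s^{-1/2}$ is not square-integrable near $s=0$, but the lemmas you invoke to repair this, Lemmas \ref{err.et} and \ref{err.so}, do not supply what is needed: the first controls $\int\sigma_1(s)\|(\bu_s-\bu_{h,s})(s)\|^2\,ds$ and the second only gives $\int e^{2\alpha s}\|(\bu-\bu_h)(s)\|_1^2\,ds\le K(t)h^2$. The estimate actually required is the time-integrated $L^2$ error bound $e^{-2\alpha t}\int_0^t e^{2\alpha s}\|(\bu-\bu_h)(s)\|^2\,ds\le K(t)h^4$ (and its analogue on the coarse mesh), which is Lemma $5.4$ of \cite{GP11}; this is exactly how the paper obtains (\ref{est.hH}). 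Relatedly, your schematic pointwise bound $|\Lambda_h((-\td_h)^{-1}\e)|\le\varepsilon\|\e\|^2+C\|\e\|_{-1}^2+K\psi(t)H^4$ with $\psi$ time-integrable cannot hold, precisely because of the non-integrability you yourself flag: the correct intermediate bound is $\frac{\mu}{2}\|\e\|^2+K\|\bu_H-\bu_h\|^2$, and the $H^4$ emerges only after integrating $\|\bu_H-\bu_h\|^2$ in time via the cited estimate from \cite{GP11}.
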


\begin{proof}
Choose $\bphi_h=e^{2\alpha t}(-\td_h)^{-1}\e(t)= e^{\alpha t}(-\td_h)^{-1}\he(t)$ in (\ref{err}) to obtain
\begin{equation}\label{neg01}
\frac{1}{2}\frac{d}{dt}\|\he\|_{-1}^2-\alpha\|\he\|_{-1}^2+\mu\|\he\|^2+\gamma \int_{t_0}^t
e^{-(\delta-\alpha)(t-s)} (\he(s),\he)~ds= e^{2\alpha t}\Lambda_h((-\td_h)^{-1}\e).
\end{equation}
From (\ref{lamb}) and Lemmas \ref{nonlin}, \ref{est.uh} and \ref{est.uH}, we have
\begin{align*}
\Lambda((-\td_h)^{-1}\e) &= b(\bu_H,\bu_H-\bu_h,(-\td_h)^{-1}\e)+b(\bu_H-\bu_h,\bu_h,(-\td_h)^{-1}\e) \\
& \le C\|\bu_H-\bu_h\|\|\e\|\big\{\|\bu_H\|_1+\|\bu_h\|_1\big\}
\le K\|\bu_H-\bu_h\|\|\e\|.
\end{align*}
We have again used the idea (\ref{nonlin02}).
Incorporate this in (\ref{neg01}). Use Young's inequality and kickback argument.
\begin{equation}\label{neg02}
\frac{d}{dt}\|\he\|_{-1}^2+\mu\|\he\|^2+2\gamma \int_{t_0}^t e^{-(\delta-\alpha)(t-s)} (\he(s),\he)~ds \le 2\alpha\|\he\|_{-1}^2+K\|\hbu_H-\hbu_h\|^2.
\end{equation}
Integrate (\ref{neg02}) with respect to time. The resulting double integral is dropped following Lemma \ref{pp}. Observe that
\begin{align}\label{est.hH}
\int_0^t \|(\hbu_H-\hbu_h)(s)\|^2 ds \le \int_0^t \|(\hbu-\hbu_H)(s)\|^2 ds
+\int_0^t \|(\hbu-\hbu_h)(s)\|^2 ds \le Ke^{2\alpha t}H^4.
\end{align}
We have used here the Lemma 5.4 of \cite{GP11}. And hence
\begin{equation}\label{neg03}
\|\he(t)\|_{-1}^2+\mu\int_0^t \|\he(s)\|^2 ds \le 2\alpha\int_0^t \|\he(s)\|_{-1}^2 ds +Ke^{2\alpha t}H^4.
\end{equation}
Multiply by $e^{-2\alpha t}$ and use Gronwall's Lemma to complete the rest of the proof.
\end{proof}

\begin{remark}
Due to non-smooth initial data, in the estimate of $\|\e(t)\|$, it is essential to introduce $\sigma(t)=\tau^*(t)e^{2\alpha t},~~\tau^*(t)=\min\{1,t\}$, in order to avoid nonlocal compatibility conditions (see \cite{GP11} and references cited there). However, the resulting integral term  (with $\bphi_h=\sigma(t)\e(t)$) is now no longer non-negative after integration. A direct estimate of this term is not possible. To handle this new difficulty, we present the following notation
$$ \te(t)=\int_0^t \e(s)~ds $$
and obtain an estimate for
$$ \int_0^t e^{2\alpha t} \|\te(s)\|_1^2 ds. $$
And with this intermediate estimate, we manage to estimate the integral term with no loss of rate of convergence.
\end{remark}

\noindent First, we integrate the error equation (\ref{err}) in time and observe that $\te_t(t)=\e(t)$, since $\e(0)=0.$ Also from (5.18) of \cite{GP11}, we find that the resulting double integral can be written in single integral form.
\begin{align}\label{erri} 
(\te_t,\bphi_h) +\mu a (\te,\bphi_h)+ \int_0^t \beta(t-s) a(\te(s), \bphi_h)~ds
= \int_0^t \Lambda_h(\bphi_h)~ds ~~\forall \bphi_h \in \bJ_h,
\end{align}

\begin{lemma}\label{inter} 
Under the assumptions of Lemma \ref{neg}, the following estimate
\begin{equation}\label{inter1}
\|\te(t)\|^2+e^{-2\alpha t}\int_0^t e^{2\alpha s} \|\te(s)\|_1^2 ds
\le Ke^{2\alpha t}t^{1/2}H^4
\end{equation}
holds, for $t>0$.
\end{lemma}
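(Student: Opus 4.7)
The plan is to test the integrated error equation (\ref{erri}) with $\bphi_h = e^{2\alpha t}\te(t)$. Since $\te_t = \e$ and $\te(0)=0$, the time-derivative term yields $\tfrac12\tfrac{d}{dt}(e^{2\alpha t}\|\te\|^2) - \alpha e^{2\alpha t}\|\te\|^2$, the viscous term yields $\mu e^{2\alpha t}\|\te\|_1^2$, and the memory term becomes $e^{2\alpha t}\int_0^t \beta(t-s)\,a(\te(s), \te(t))\,ds$. Introducing the substitution $\hat{\te}(r):=e^{\alpha r}\te(r)$ recasts the memory term as $\int_0^t e^{-(\delta-\alpha)(t-s)}\,a(\hat{\te}(s), \hat{\te}(t))\,ds$; after time-integration on $[0,t]$ this double integral is non-negative by Lemma~\ref{pp} and may be dropped. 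The stray $\alpha e^{2\alpha t}\|\te\|^2$ is absorbed by $\mu e^{2\alpha t}\|\te\|_1^2$ via Poincar\'e, thanks to $\alpha < \mu\lambda_1/2$.

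The crux lies in the nonlinear right-hand side $e^{2\alpha t}\int_0^t \Lambda_h(\te(t))\,ds$. I would use the splitting in (\ref{lamb}) to write $\Lambda_h(\te)=b(\bu_H,\bu_H-\bu_h,\te)+b(\bu_H-\bu_h,\bu_h,\te)$. Exploiting the antisymmetry $b(\bv,\bw,\bphi)=-b(\bv,\bphi,\bw)$ and the integration-by-parts identity (\ref{nonlin02}), the derivative is shifted onto $\te(t)$ (which, crucially, is independent of the dummy variable $s$), so that only $\|(\bu_H-\bu_h)(s)\|$ enters—this is the component that is sharp to $O(H^2)$ by Theorem~\ref{errest}, namely $\|(\bu_H-\bu_h)(s)\|\le Ks^{-1/2}H^2$. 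Combining with the $L^\infty$-control $\|\bu_H(s)\|_{L^\infty}\le C\|\bu_H(s)\|^{1/2}\|\bu_H(s)\|_2^{1/2}\le Ks^{-1/4}$ (Agmon plus Lemma~\ref{est.uH}) and the uniform bounds $\|\bu_H(s)\|_1+\|\bu_h(s)\|_1\le K$ from Lemmas~\ref{est.uh}--\ref{est.uH}, Lemma~\ref{nonlin} yields
$$\Bigl|e^{2\alpha t}\!\int_0^t\!\Lambda_h(\te(t))\,ds\Bigr| \le C e^{2\alpha t}H^2\|\te(t)\|_1\!\int_0^t\! s^{-3/4}\,ds + (\text{lower order}) \le \varepsilon e^{2\alpha t}\|\te(t)\|_1^2+C_\varepsilon e^{2\alpha t}H^4 t^{1/2}+Ce^{2\alpha t}\|\te(t)\|^2,$$
after Young's inequality, where the lower-order term is controlled by Lemma~\ref{err.so} (averaged $H^1$ error) to avoid loss in the rate.

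Absorbing $\varepsilon\|\te\|_1^2$ into the left-hand side by kickback, integrating in time, and applying Gronwall's lemma to the residual $\|\te\|^2$ term on the right gives $e^{2\alpha t}\|\te(t)\|^2 + \int_0^t e^{2\alpha s}\|\te(s)\|_1^2\,ds \le K e^{2\alpha t}t^{1/2}H^4$, which is the desired estimate after dividing by $e^{-2\alpha t}$ on the integral.

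The main obstacle is achieving the sharp $H^4$ rate in the nonlinear term: since only the $L^2$-error for $\bu_H-\bu_h$ is of order $H^2$ (its $H^1$-norm is only $O(H)$), the antisymmetry of $b$ must be used to ensure only $\|(\bu_H-\bu_h)(s)\|$ appears, while the $s^{-1/2}$ non-smooth singularity forces the $L^\infty$-bound on $\bu_H$ to be controlled via Agmon so that the product is integrable and yields exactly the $t^{1/2}$ weight claimed.
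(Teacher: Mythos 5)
Your proposal is correct and follows essentially the same route as the paper: test (\ref{erri}) with $e^{2\alpha t}\te(t)$, drop the memory double integral by Lemma \ref{pp}, absorb the $\alpha$-term via Poincar\'e, and use the antisymmetry/integration-by-parts trick so that only $\|(\bu_H-\bu_h)(s)\|$ (the $O(H^2)$ quantity) appears in the nonlinear term, with the coefficient controlled by $\|\cdot\|^{1/2}\|\cdot\|_2^{1/2}\sim s^{-1/4}$ and the $s^{-1/2}$ singularity producing exactly the $t^{1/2}$ weight. The only (immaterial) variation is that you bound the time integral of the nonlinearity using the pointwise estimate of Theorem \ref{errest} and $\int_0^t s^{-3/4}\,ds$, whereas the paper applies Cauchy--Schwarz in time and invokes the integrated bound $\int_0^t\|(\hbu_H-\hbu_h)(s)\|^2\,ds\le Ke^{2\alpha t}H^4$ from (\ref{est.hH}); also no Gronwall step is actually needed since the residual $\|\te\|^2$ term is already absorbed by Poincar\'e.
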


\begin{proof}
Put $\bphi_h=e^{2\alpha t}\te(t)$ in (\ref{erri}) to find
\begin{align}\label{inter01}
\frac{1}{2}\frac{d}{dt}\big\{e^{2\alpha t}\|\te\|^2\big\}-\alpha e^{2\alpha t}\|\te\|^2
+\mu e^{2\alpha t}\|\te\|_1^2+\gamma \int_0^t e^{-(\delta-\alpha)(t-s)} a(e^{\alpha s} \te(s),e^{\alpha t}\te)~ds \nonumber \\
= e^{2\alpha t} \int_0^t \Lambda_h(\te)~ds.
\end{align}
Now from (\ref{lamb}) and Lemma \ref{nonlin}, we have (following (\ref{nonlin02}))
\begin{align}\label{inter02}
\Lambda_h(\te) &= b(\bu_H,\bu_H-\bu_h,\te)+b(\bu_H-\bu_h,\bu_h,\te) \nonumber \\
& \le C\|\bu_H-\bu_h\|\|\te\|_1\big\{\|\bu_H\|_1^{1/2}\|\bu_H\|_2^{1/2}+\|\bu_h\|_1^{1/2}
\|\bu_h\|_2^{1/2}\big\}.
\end{align}
Incorporate (\ref{inter02}) to obtain from (\ref{inter01})
\begin{align*}
\frac{d}{dt}\big\{e^{2\alpha t}\|\te\|^2\big\}-2\alpha e^{2\alpha t}\|\te\|^2
+2\mu e^{2\alpha t}\|\te\|_1^2+2\gamma \int_{t_0}^t e^{-(\delta-\alpha)(t-s)} a(e^{\alpha s} \te(s),e^{\alpha t}\te)~ds \\
\le ce^{2\alpha t}\|\te\|_1 \int_0^t \|(\bu_H-\bu_h)(s)\|\big\{\|\bu_H(s)\|_1^{1/2} \|\bu_H(s)\|_2^{1/2} +\|\bu_h(s)\|_1^{1/2}\|\bu_h(s)\|_2^{1/2}\big\}~ds.
\end{align*}
Use Young's inequality, kickback argument and then Poincar\'e inequality. Again apply Young's inequality and then Lemmas \ref{est.uH} and \ref{est.uh} to find
\begin{align}\label{inter03}
& \frac{d}{dt}\big\{e^{2\alpha t}\|\te\|^2\big\}+\big(\mu-\frac{2\alpha}{\lambda_1}\big) e^{2\alpha t}\|\te\|_1^2+2\gamma \int_{t_0}^t e^{-(\delta-\alpha)(t-s)} a(e^{\alpha s} \te(s),e^{\alpha t}\te)~ds \nonumber \\
\le & Ce^{2\alpha t}\Big[\int_0^t \|(\bu_H-\bu_h)(s)\|\big\{\|\bu_H(s)\|_1^{1/2} \|\bu_H(s)\|_2^{1/2} +\|\bu_h(s)\|_1^{1/2}\|\bu_h(s)\|_2^{1/2}\big\}~ds \Big]^2 \nonumber \\
\le & Ke^{2\alpha t}\int_0^t \|(\bu_H-\bu_h)(s)\|^2 ds \int_0^t \|\big\{\|\bu_H(s)\|_2 +\|\bu_h(s)\|_2\big\}~ds \nonumber \\
\le & Ke^{2\alpha t}\int_0^t \|(\bu_H-\bu_h)(s)\|^2 ds \int_0^t s^{-1/2}ds
\le Ke^{2\alpha t}t^{1/2}\int_0^t \|(\bu_H-\bu_h)(s)\|^2 ds.
\end{align}
We use the fact that $1< e^{\alpha t}$ since $\alpha$ and $t$ are positive. Use (\ref{est.hH}) to get
\begin{align*}
\int_0^t \|(\bu_H-\bu_h)(s)\|^2 ds \le \int_0^t \|(\hbu_H-\hbu_h)(s)\|^2 ds
\le Ke^{2\alpha t}H^4.
\end{align*}
With this, we now integrate (\ref{inter03}) in time and drop the double integral, since it is non-negative, see Lemma \ref{pp}.
\begin{align}\label{inter04}
& e^{2\alpha t}\|\te(t)\|^2+\big(\mu-\frac{2\alpha}{\lambda_1}\big)\int_0^t e^{2\alpha s} \|\te(s)\|_1^2 ds \le Ke^{4\alpha t}t^{1/2}H^4.
\end{align}
Multiply (\ref{inter04}) by $e^{-2\alpha t}$ to complete the rest of the proof.
\end{proof}

\begin{lemma}\label{pee} 
Under the assumptions of Lemma \ref{neg}, the following estimate
\begin{equation}\label{pee1}
\tau^*(t)\|\e(t)\|^2+e^{-2\alpha t}\int_0^t \sigma(s)\|\e(s)\|_1^2 ds \le K(t)H^4
\end{equation}
holds, for $t>0$, where $\sigma(t)=\tau^*(t)e^{2\alpha t},~\tau^*(t)=\min\{1,t\}$.
\end{lemma}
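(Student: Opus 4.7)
Following the pattern of Lemmas \ref{err.et} and \ref{err.so}, I will test the error equation (\ref{err}) with $\bphi_h=\sigma(t)\e(t)$. The product rule on $(\e_t,\sigma\e)$ yields
$$\tfrac{1}{2}\tfrac{d}{dt}\{\sigma(t)\|\e\|^2\} + \mu\sigma(t)\|\e\|_1^2 + \sigma(t)\!\int_0^t\!\beta(t-s)a(\e(s),\e(t))\,ds \;=\; \tfrac{1}{2}\sigma_t(t)\|\e\|^2 + \sigma(t)\Lambda_h(\e).$$
The plan is to integrate in $t$, absorb the $\|\e\|_1^2$ contributions by kickback, and invoke Lemmas \ref{neg} and \ref{inter} together with Theorem \ref{errest} (via the triangle inequality applied to $\|\bu_H-\bu_h\|$) to close the estimate at the $K(t)H^4$ level.

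\textbf{Memory term.} As the remark preceding Lemma \ref{inter} indicates, Lemma \ref{pp} no longer yields non-negativity of the memory integral under the $\sigma$-weight. I will circumvent this by integration by parts in $s$, using $\e(s)=\te_s(s)$, $\te(0)=0$, and $\beta'(\tau)=-\delta\beta(\tau)$, to obtain
$$\int_0^t \beta(t-s)\e(s)\,ds \;=\; \gamma\te(t)\;-\;\delta\int_0^t\beta(t-s)\te(s)\,ds.$$
Cauchy-Schwarz and Young's inequality then bound the memory contribution by $\varepsilon\sigma(t)\|\e\|_1^2 + C\sigma(t)\|\te(t)\|_1^2 + C\sigma(t)\int_0^t\|\te(s)\|_1^2\,ds$; after time integration (and a Fubini swap on the double integral) the last two terms are controlled by Lemma \ref{inter}.

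\textbf{Nonlinear term.} This is the main obstacle: naive bounds yield only an $H^3$ rate, since $\|\bu_H-\bu_h\|_1 \sim Ht^{-1/2}$ is merely $O(H)$. The key is to exploit the antisymmetry $b(\bv,\bw,\bphi)=-b(\bv,\bphi,\bw)$ to re-route the derivative onto $\e$, writing
$$\Lambda_h(\e) \;=\; -b(\bu_H,\e,\bu_H-\bu_h)\;-\;b(\bu_H-\bu_h,\e,\bu_h),$$
and then applying the second estimate of Lemma \ref{nonlin} to each term. This places $\|\Delta_h\,\cdot\,\|$ on $\bu_H$ or $\bu_h$ (supplying a factor $(\tau^*(t))^{-1/2}$ through Lemmas \ref{est.uh} and \ref{est.uH}) while keeping $\bu_H-\bu_h$ to the first power in $L^2$. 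After Young's inequality this gives
$$\sigma(t)|\Lambda_h(\e)| \;\le\; \varepsilon\sigma(t)\|\e\|_1^2 \;+\; C\sigma(t)(\tau^*(t))^{-1/2}\|\bu_H-\bu_h\|^2.$$
Since $\|\bu_H-\bu_h\|(s)^2 \le Ke^{Ks}H^4/s$ by Theorem \ref{errest} and the triangle inequality, and $(\tau^*(s))^{1/2}/s$ is integrable near $s=0$, the time integral of the last term is bounded by $K(t)e^{2\alpha t}H^4$.

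\textbf{Conclusion.} After time integration, kickback of the $\varepsilon\sigma\|\e\|_1^2$ terms into $\mu\sigma\|\e\|_1^2$, and the observation $\sigma_t(s)\le Ce^{2\alpha s}$, the right-hand side reduces to $K(t)e^{2\alpha t}H^4 + C\int_0^t e^{2\alpha s}\|\e(s)\|^2\,ds$, the latter of which is itself $K(t)e^{2\alpha t}H^4$ by Lemma \ref{neg}. Multiplying through by $e^{-2\alpha t}$ yields (\ref{pee1}). The two main obstacles are the loss of positivity of the $\sigma$-weighted memory term (resolved by integration by parts and the intermediate estimate of Lemma \ref{inter}) and the need to rewrite the nonlinear term via antisymmetry to avoid paying the sub-optimal $H^1$-convergence rate of $\bu_H-\bu_h$.
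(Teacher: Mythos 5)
Your proposal is correct and follows essentially the same route as the paper: test with $\sigma(t)\e(t)$, integrate the memory term by parts via $\int_0^t\beta(t-s)\e(s)\,ds=\gamma\te(t)-\delta\int_0^t\beta(t-s)\te(s)\,ds$, keep $\bu_H-\bu_h$ in the $\bL^2$ norm in the nonlinear term (the paper does this through the rewriting (\ref{nonlin02}), which is the same manipulation as your appeal to the skew-symmetry of $b$), and close with Lemmas \ref{neg} and \ref{inter}. The only cosmetic difference is that you control $\int_0^t e^{2\alpha s}(\tau^*(s))^{1/2}\|(\bu_H-\bu_h)(s)\|^2\,ds$ by the pointwise bound of Theorem \ref{errest} together with the integrability of $s^{-1/2}$, whereas the paper invokes the integrated $L^2$ estimate (\ref{est.hH}) from Lemma $5.4$ of \cite{GP11}; both yield the same $K(t)H^4$ bound.
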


\begin{proof}
We first recall that $\beta(t)=\gamma e^{-\delta t}$ and now use integration by parts to find
\begin{equation}\label{pee01a}
\int_0^t \beta(t-s) \e(s)~ds= \gamma \te(t)-\delta\int_0^t \beta(t-s)\te(s)~ds.
\end{equation}
Keeping this in mind, we choose $\bphi_h=\sigma(t)\e(t)$ in (\ref{err}) to obtain
\begin{align}\label{pee01}
\frac{d}{dt}\big\{\sigma(t)\|\e\|^2\big\}+2\mu~\sigma(t)\|\e\|_1^2= \sigma_t(t)\|\e\|^2
+2\sigma(t)\Lambda_h(\e)-2\gamma\sigma(t) a(\te,\e) \nonumber \\
+2\delta\sigma(t) \int_0^t \beta(t-s) a(\te(s),\e)~ds.
\end{align}
We repeat the non-linear estimate (\ref{inter02}).
$$ \Lambda_h(\e) \le C\|\bu_H-\bu_h\|\|\e\|_1\big\{\|\bu_H\|_1^{1/2}\|\bu_H\|_2^{1/2} +\|\bu_h\|_1^{1/2}\|\bu_h\|_2^{1/2}\big\}. $$
And hence, we find from (\ref{pee01})
\begin{align*}
\frac{d}{dt}\big\{\sigma(t)\|\e\|^2\big\}+2\mu~\sigma(t)\|\e\|_1^2 \le (1+2\alpha)\|\he\|^2
+2\sigma(t)\|\e\|_1\Big\{\gamma\|\te\|_1+\delta\int_0^t \beta(t-s)\|\te\|_1 ds\Big\} \\
+C\sigma(t)\|\e\|_1\|\bu_H-\bu_h\|\big\{\|\bu_H\|_1^{1/2}\|\bu_H\|_2^{1/2} +\|\bu_h\|_1^{1/2}\|\bu_h\|_2^{1/2}\big\}.
\end{align*}
Use Young's inequality and then kickback argument to yield
\begin{align}\label{pee02}
\frac{d}{dt}\big\{\sigma(t)\|\e\|^2\big\}+\mu~\sigma(t)\|\e\|_1^2 \le (1+2\alpha)\|\he\|^2
+C\sigma(t)\Big\{\gamma\|\te\|_1+\delta\int_0^t \beta(t-s)\|\te\|_1 ds\Big\}^2 \nonumber \\
+C\sigma(t)\|\bu_H-\bu_h\|^2\big\{\|\bu_H\|_1\|\bu_H\|_2 +\|\bu_h\|_1\|\bu_h\|_2\big\}.
\end{align}
Use Lemmas \ref{est.uH} and \ref{est.uh} to estimate the last term of (\ref{pee02}) as follows:
$$ C\sigma(t)\|\bu_H-\bu_h\|^2\big\{\|\bu_H\|_1\|\bu_H\|_2 +\|\bu_h\|_1\|\bu_h\|_2\big\}
 \le K\|\hbu_H-\hbu_h\|^2. $$
We have used the fact that $\tau^*(t) \le 1$. Incorporate this in (\ref{pee02}) and integrate in time. Repeat the technique of (\ref{est.hH}).
\begin{align}\label{pee03}
\sigma(t)\|\e(t)\|^2+\mu\int_0^t \sigma(s)\|\e(s)\|_1^2 ds \le C\int_0^t \Big\{\|\he(s)\|^2
+e^{2\alpha s}\|\te(s)\|_1^2\Big\}~ds+Ke^{2\alpha t}H^4.
\end{align}
Now use Lemmas \ref{neg} and \ref{inter} and then multiply the resulting inequality by $e^{-2\alpha t}$ to complete the rest of the proof.
\end{proof}

\begin{lemma}\label{inter10}
Under the assumptions of Lemma \ref{pee}, the following holds:
\begin{equation}\label{inter11}
(\tau^*(t))^{1/2}\|\te(t)\|_1 \le K(t)H^2.
\end{equation}
\end{lemma}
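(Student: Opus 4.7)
The plan is to choose $\bphi_h=\sigma(t)\e(t)=\sigma(t)\te_t(t)$ in the integrated error equation (\ref{erri}). Using $a(\te,\te_t)=\tfrac{1}{2}\tfrac{d}{dt}\|\te\|_1^2$ and redistributing the weight, one obtains
\begin{align*}
\sigma(t)\|\e(t)\|^2+\frac{\mu}{2}\frac{d}{dt}\{\sigma(t)\|\te(t)\|_1^2\}=\frac{\mu}{2}\sigma_t(t)\|\te(t)\|_1^2-\sigma(t)\int_0^t\beta(t-s)a(\te(s),\e(t))\,ds+\sigma(t)\int_0^t\Lambda_h(\e(t))\,ds,
\end{align*}
where in the last integral $\bu_H$ and $\bu_h$ inside $\Lambda_h$ are evaluated at $s$. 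Integrating from $0$ to $t$, using $\te(0)=0$, and dropping the non-negative $\int_0^t\sigma(r)\|\e(r)\|^2\,dr$ on the left reduces the proof to bounding three integrals on the right.

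The first integral $\int_0^t|\sigma_r(r)|\,\|\te(r)\|_1^2\,dr$ is, since $|\sigma_r(r)|\le Ce^{2\alpha r}$, directly controlled by Lemma \ref{inter}. For the memory-kernel term I apply Cauchy-Schwarz in the form $\int_0^r\beta(r-s)\|\te(s)\|_1\,ds\le(\gamma/\delta)^{1/2}\bigl(\int_0^r\beta(r-s)\|\te(s)\|_1^2\,ds\bigr)^{1/2}$ followed by Young's inequality; the resulting $\varepsilon\int_0^t\sigma(r)\|\e(r)\|_1^2\,dr$ piece is bounded by Lemma \ref{pee}, while Fubini together with the rough estimate $\int_s^t\sigma(r)\beta(r-s)\,dr\le Ce^{2\alpha t}$ reduces the other piece to $\int_0^t e^{2\alpha s}\|\te(s)\|_1^2\,ds$, which is again handled by Lemma \ref{inter}.

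The crux is the non-linear term. Following the approach used in the proof of Lemma \ref{inter}, I use the skew-symmetry of $b$, the rewriting (\ref{nonlin02}) to avoid $\|(\bu_H-\bu_h)\|_1$, and the 2D Agmon inequality $\|\bv\|_{L^\infty}\le C\|\bv\|_1^{1/2}\|\bv\|_2^{1/2}$ to obtain, for each $s\in[0,r]$,
\begin{align*}
|b(\bu_H(s),(\bu_H-\bu_h)(s),\e(r))+b((\bu_H-\bu_h)(s),\bu_h(s),\e(r))|\le CK(\tau^*(s))^{-1/4}\|(\bu_H-\bu_h)(s)\|\,\|\e(r)\|_1,
\end{align*}
the singular weight $(\tau^*(s))^{-1/4}$ coming from $\|\bu_H(s)\|_2+\|\bu_h(s)\|_2\le K(\tau^*(s))^{-1/2}$ of Lemmas \ref{est.uH} and \ref{est.uh}. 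Cauchy-Schwarz in $s$ together with (\ref{est.hH}), i.e. $\int_0^r\|(\bu_H-\bu_h)(s)\|^2\,ds\le Ke^{2\alpha r}H^4$, yields $\int_0^r|\Lambda_h(\e(r))|\,ds\le CKH^2(1+r)^{1/2}e^{\alpha r}\|\e(r)\|_1$, and a final Young's inequality against Lemma \ref{pee} absorbs the $\varepsilon\int_0^t\sigma(r)\|\e(r)\|_1^2\,dr$ part and produces the desired $K(t)H^4$ remainder.

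The main obstacle is verifying that the singular factor $(\tau^*(s))^{-1/4}$, inherited from the lack of global $H^2$-regularity of $\bu_H,\bu_h$ near $t=0$ under non-smooth initial data, stays integrable on $[0,r]$ and does not destroy the optimal $H^4$ rate; this works because $(\tau^*(s))^{-1/2}$ is integrable near $0$. Collecting all three bounds gives $\sigma(t)\|\te(t)\|_1^2\le K(t)H^4$, i.e.\ $\tau^*(t)e^{2\alpha t}\|\te(t)\|_1^2\le K(t)H^4$, and since $e^{2\alpha t}\ge 1$ this is equivalent to the stated estimate $(\tau^*(t))^{1/2}\|\te(t)\|_1\le K(t)H^2$.
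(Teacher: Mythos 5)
Your proposal is correct and follows essentially the same route as the paper: the same test function $\bphi_h=\sigma(t)\e(t)$ in (\ref{erri}), the same treatment of the nonlinear term via (\ref{nonlin02}) and the $(\tau^*(s))^{-1/2}$ bound on $\|\bu_H\|_2,\|\bu_h\|_2$ combined with (\ref{est.hH}), and the same closing appeal to Lemmas \ref{inter} and \ref{pee}. The only cosmetic difference is that you discard the non-negative term $\int_0^t\sigma(s)\|\te_s(s)\|^2\,ds$ that the paper retains on the left-hand side.
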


\begin{proof}
Put $\bphi_h=\sigma(t)\te_t(t)=\sigma(t)\e(t)$ in (\ref{erri}) to find
\begin{align}\label{inter101}
\sigma(t)\|\te_t\|^2+\frac{\mu}{2}\frac{d}{dt}\big\{\sigma(t)\|\te\|_1^2\big\}
-\frac{\mu}{2}\sigma_t(t)\|\te\|_1^2= -\sigma(t)\int_0^t \beta(t-s) a(\te(s),\e)~ds
+\sigma(t) \int_0^t \Lambda_h(\e)~ds.
\end{align}
As in (\ref{inter02}), we have, using Lemmas \ref{est.uH} and \ref{est.uh}
\begin{align}\label{inter102}
\sigma(t) \int_0^t \Lambda_h(\e)~ds \le K\sigma(t)\|\e\|_1\int_0^t \|\bu_H(s)-\bu_h(s)\| \big\{\|\bu_H(s)\|_2^{1/2}+\|\bu_h(s)\|_2^{1/2}\big\}~ds.
\end{align}
Put (\ref{inter102}) in (\ref{inter101}) and integrate with respect to time. Use Cauchy-Schwarz and Young's inequalities. For the resulting double integral term proceed as in (\ref{inter03}) and (\ref{inter04}) to observe that
\begin{align}\label{inter103}
\mu\sigma(t)\|\te\|_1^2+2\int_0^t \sigma(s)\|\te_s(s)\|^2ds \le c\int_0^t \e^{2\alpha s}
\|\te(s)\|_1^2ds+c\int_0^t \sigma(s)\|\e\|_1^2ds+Ke^{4\alpha t} t^{3/2}H^4.
\end{align}
Use Lemmas \ref{inter} and \ref{pee} to complete the rest of the proof.
\end{proof}
\begin{lemma}\label{eeu} 
Under the assumptions of Lemma \ref{neg}, the following estimate
\begin{equation}\label{eeu1}
(\tau^*(t))^2\|\e(t)\|_1^2+e^{-2\alpha t}\int_0^t \sigma_1(s)\|\e_s(s)\|^2 ds \le K(t)H^4
\end{equation}
holds, for $t>0$, where $\sigma_1(t)=(\tau^*(t))^2 e^{2\alpha t}$.
\end{lemma}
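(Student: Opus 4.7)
My plan is to test the error equation (\ref{err}) with $\bphi_h=\sigma_1(t)\e_t$ and integrate in $t$, essentially mimicking the structure of Lemma \ref{pee} but with $\sigma$ replaced by $\sigma_1$ and the test vector $\e$ replaced by $\e_t$. The pairing $\mu\sigma_1 a(\e,\e_t)=\tfrac{\mu}{2}\tfrac{d}{dt}[\sigma_1\|\e\|_1^2]-\tfrac{\mu}{2}\sigma_{1,t}\|\e\|_1^2$ together with $(\e_t,\sigma_1\e_t)=\sigma_1\|\e_t\|^2$ produces, after integration and use of $\e(0)=0=\sigma_1(0)$, exactly the desired left-hand side $\tfrac{\mu}{2}\sigma_1(t)\|\e(t)\|_1^2+\int_0^t\sigma_1(s)\|\e_s\|^2\,ds$. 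The residual $\int_0^t\sigma_{1,s}\|\e\|_1^2\,ds$ is absorbed since $\sigma_{1,s}\le C\sigma$ and Lemma \ref{pee} already gives $\int_0^t\sigma\|\e\|_1^2\,ds\le K(t)H^4$.

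The memory term $\sigma_1\int_0^t\beta(t-s)\,a(\e(s),\e_t)\,ds$ is processed exactly as in Lemma \ref{pee}: first use (\ref{pee01a}) to convert the convolution into $\gamma\te-\delta\int_0^t\beta(t-s)\te(s)\,ds$, then integrate by parts in $t$ in the style of (\ref{int}) to expose a perfect derivative (which joins the left) plus volume contributions of the form $\sigma_1\|\te\|_1^2$ and $\sigma_1\|\e\|_1^2$. The first of these is controlled by Lemma \ref{inter10}, which yields $\sigma_1\|\te\|_1^2\le \tau^* e^{2\alpha t}K(t)H^4$, and the second again by Lemma \ref{pee}.

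The main obstacle is $\sigma_1\Lambda_h(\e_t)$, with $\Lambda_h$ as in (\ref{lamb}). Using the symmetrization (\ref{nonlin02}) on each summand separates the pieces in which $\e_t$ appears undifferentiated from those carrying $\nabla\e_t$. The former are bounded directly by Lemma \ref{nonlin} together with $\|\bu_H\|_{L^\infty}\le C\|\bu_H\|_2\le K(\tau^*)^{-1/2}$ and its $\bu_h$ analogue, producing contributions of the form $\varepsilon\sigma_1\|\e_t\|^2+\text{(integrable)}$ after Young's inequality. The $\nabla\e_t$ pieces cannot be absorbed directly (there is no $\sigma_1\|\e_t\|_1^2$ on the left), and I would instead integrate them in $s$ by parts; a typical rewrite is
\begin{align*}
\int_0^t\sigma_1(s)(\bu_H\cdot\nabla\e_s,\bu_H-\bu_h)\,ds
 &= \bigl[\sigma_1(\bu_H\cdot\nabla\e,\bu_H-\bu_h)\bigr]_0^t - \int_0^t\sigma_{1,s}(\bu_H\cdot\nabla\e,\bu_H-\bu_h)\,ds \\
 &\quad - \int_0^t\sigma_1\bigl[(\bu_{H,s}\cdot\nabla\e,\bu_H-\bu_h)+(\bu_H\cdot\nabla\e,\bu_{H,s}-\bu_{h,s})\bigr]\,ds.
\end{align*}
The boundary term at $s=t$ is absorbed into $\tfrac{\mu}{2}\sigma_1(t)\|\e(t)\|_1^2$ by Young and the $O(H^2)$ estimate $\|\bu_H-\bu_h\|\le K(t)(\tau^*)^{-1/2}H^2$ of Theorem \ref{errest}. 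The three volume terms are handled by Cauchy--Schwarz in $s$ combined with Lemma \ref{pee} for $\int\sigma\|\e\|_1^2$, Lemma \ref{est1.uH} for $\|\bu_{H,s}\|_2$, and Lemma \ref{err.et} (applied to both $\bu-\bu_h$ and $\bu-\bu_H$) for $\int_0^t\sigma_1\|\bu_{H,s}-\bu_{h,s}\|^2\,ds\le K(t)H^4$.

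Collecting all contributions, a final Young with small $\varepsilon$ absorbs the kickback terms, Lemma \ref{pp} disposes of the non-negative convolution, and multiplication by $e^{-2\alpha t}$ gives the claimed bound. The hardest step will be the careful bookkeeping of the weights $\sigma,\sigma_1,\tau^*,\sigma_{1,s}$ through the time integration by parts on the nonlinear term, so that each residual integrand is captured at rate $H^4$ by a previous lemma in the chain.
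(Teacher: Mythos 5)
Your overall skeleton matches the paper's: test (\ref{err}) with $\bphi_h=\sigma_1(t)\e_t$, integrate in time, convert the memory term via (\ref{pee01a}) and integration by parts as in (\ref{int}), and feed the residuals to Lemmas \ref{pee}, \ref{inter} and \ref{inter10}. The treatment of the nonlinear term, however, has a genuine gap. You propose to split $\sigma_1\Lambda_h(\e_t)$ into pieces carrying $\nabla\e_t$ (integrated by parts in time) and pieces in which $\e_t$ appears undifferentiated (bounded ``directly''). The direct bound cannot reach the rate $H^4$. After testing with $\sigma_1\e_t$ the only absorbable quantity is $\varepsilon\sigma_1\|\e_t\|^2$ --- there is no $\sigma_1\|\e_t\|_1^2$ on the left --- so every direct estimate must end in $(\cdots)\cdot\|\e_t\|$ with no factor of $\|\e_t\|_1$. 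Inspecting Lemma \ref{nonlin}, every such bound forces the difference $\bu_H-\bu_h$ to enter through $\|\nabla(\bu_H-\bu_h)\|$ or through $\|\bu_H-\bu_h\|^{1/2}\|\nabla(\bu_H-\bu_h)\|^{1/2}$ (e.g.\ for $((\bu_H-\bu_h)\cdot\nabla\bu_h,\e_t)$ one gets at best $\|\bu_H-\bu_h\|^{1/2}\|\bu_H-\bu_h\|_1^{1/2}\|\bu_h\|_1^{1/2}\|\bu_h\|_2^{1/2}\|\e_t\|$); since $\|\bu_H-\bu_h\|_1=O(H)$ while $\|\bu_H-\bu_h\|=O(H^2)$, Young's inequality then yields a residual of order $H^2$ or at best $H^3$, not $H^4$. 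Trying to trade $\nabla(\bu_H-\bu_h)$ for $\nabla\e_t$ via (\ref{nonlin02}) just moves you back into the other class of terms, and the inverse inequality (\ref{inv.hypo}) only makes things worse.

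The paper's remedy, which you need for \emph{all} the nonlinear contributions and not just the $\nabla\e_t$ ones, is the identity (\ref{eeu05}): write $\sigma_1(t)\Lambda_h(\e_t)=\frac{d}{dt}\big\{\sigma_1(t)\big(b(\bu_H,\bu_H-\bu_h,\e)+b(\bu_H-\bu_h,\bu_h,\e)\big)\big\}$ minus correction terms in which the time derivative falls on $\sigma_1$, on $\bu_H$, on $\bu_h$, or on $\bu_H-\bu_h$, every one of them tested against $\e$ rather than $\e_t$. Then each term pairs $\|\bu_H-\bu_h\|$ or $\|\bu_{H,t}-\bu_{h,t}\|$ (both $O(H^2)$ in the appropriate weighted norms, the latter by Lemma \ref{err.et} applied on both meshes) with $\|\e\|_1$, which is absorbable into $\sigma_1(t)\|\e\|_1^2$ at the endpoint and controlled in $\int_0^t\sigma(s)\|\e(s)\|_1^2\,ds\le K(t)H^4$ by Lemma \ref{pee}; this is exactly (\ref{eeu06})--(\ref{eeu09}). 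Your integration by parts on the $\nabla\e_t$ pieces is essentially a fragment of this computation and those terms do close at rate $H^4$; it is the ``directly bounded'' remainder that breaks the rate, so the time integration by parts must be applied to the nonlinear form as a whole.
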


\begin{proof}
Choose $\bphi_h=\sigma_1(t)\e_t(t)$ in (\ref{err}) and incorporate (\ref{pee01a}) to obtain
\begin{align}\label{eeu01}
\sigma_1(t)\|\e_t\|^2+\frac{\mu}{2}\frac{d}{dt}\big\{\sigma_1(t)\|\e\|_1^2\big\}= \frac{\mu}{2}\sigma_{1,t}(t)\|\e\|_1^2+\sigma_1(t)\Lambda_h(\e_t)-\gamma\sigma_1(t)
 a(\te,\e_t) \nonumber \\
+\delta\sigma_1(t) \int_0^t \beta(t-s) a(\te(s),\e_t)~ds.
\end{align}
Multiply (\ref{eeu01}) by $2$ and integrate with respect to time. Use integration by parts to observe that
\begin{align}\label{eeu02}
& -2\gamma\int_0^t \sigma_1(s) a(\te(s),\e_s)~ds \nonumber \\
& = -2\gamma\sigma_1(t) a(\te,\e)+2\gamma\int_0^t \sigma_{1,s}(s) a(\te(s),\e)~ds
+2\gamma\int_0^t \sigma_1(s) \|\e(s)\|_1^2~ds \nonumber \\
& \le \frac{\mu}{4}\sigma_1(t)\|\e\|_1^2+c\sigma_1(t)\|\te\|_1^2
+C\int_0^t \sigma(s)\|\e(s)\|_1^2 ds+C\int_0^t \sigma(s)\|\te(s)\|_1^2 ds.
\end{align}
And
\begin{align}\label{eeu03}
& 2\delta\int_0^t \sigma_1(s) \int_0^s \beta(s-\tau) a(\te(\tau),\e_s)~d\tau~ds \nonumber \\
=& 2\delta\sigma_1(t)\int_0^t \beta(t-s) a(\te(s),e)~ds-2\delta\int_0^t \sigma_{1,s}(s) \int_0^s \beta(s-\tau) a(\te(\tau),\e(s)~d\tau~ds \nonumber \\
&+2\delta^2\int_0^t \sigma_1(s) \int_0^s \beta(s-\tau) a(\te(\tau),\e(s))~d\tau~ds
-2\delta\gamma\int_0^t \sigma_1(s) a(\te(s),\e(s))~ds \nonumber \\
\le & \frac{\mu}{4}\sigma_1(t)\|\e\|_1^2+C\int_0^t e^{2\alpha s}\|\te(s)\|_1^2~ds
+C\int_0^t \sigma(s)\|\e(s)\|_1^2 ds.
\end{align}
We have again used the fact that $\tau^*(t) \le 1$. And now, after integration of (\ref{eeu01}), we incorporate (\ref{eeu02})-(\ref{eeu03}) to find
\begin{align}\label{eeu04}
2\int_0^t \sigma_1(s)\|\e_s(s)\|^2 ds +\frac{2\mu}{3}\sigma_1(t)\|\e\|_1^2 & \le
C\int_0^t e^{2\alpha s}\|\te(s)\|_1^2 ds+C\int_0^t \sigma(s)\|\e(s)\|_1^2 ds \nonumber \\
& +C\sigma_1(t)\|\te\|_1^2+2\int_0^t \sigma_1(s)\Lambda_h(\e_s)~ds.
\end{align}
We rewrite the non-linear terms as follows:
\begin{align*}
\Lambda_h(\e_t) &=b(\bu_H,\bu_H-\bu_h,\e_t)+b(\bu_H-\bu_h,\bu_h,\e_t) \\
&= \frac{d}{dt}\big\{b(\bu_H,\bu_H-\bu_h,\e)+b(\bu_H-\bu_h,\bu_h,\e)\big\}
-b(\bu_{H,t},\bu_H-\bu_h,\e) \\
&-b(\bu_H,\bu_{H,t}-\bu_{h,t},\e)-b(\bu_{H,t}-\bu_{h,t},\bu_h,\e)
-b(\bu_H-\bu_h,\bu_{h,t},\e).
\end{align*}
And hence
\begin{align}\label{eeu05}
\sigma_1(t)\Lambda_h(\e_t) &= \sigma_1(t)\big( b(\bu_H,\bu_H-\bu_h,\e_t) +b(\bu_H-\bu_h,\bu_h,\e_t)\big) \nonumber \\
&= \frac{d}{dt}\big\{\sigma_1(t)\big(b(\bu_H,\bu_H-\bu_h,\e)+b(\bu_H-\bu_h,\bu_h,\e)
\big)\big\} \nonumber \\
&-\sigma_{1,t}(t)\big(b(\bu_H,\bu_H-\bu_h,\e)+b(\bu_H-\bu_h,\bu_h,\e)
\big)-\sigma_1(t)b(\bu_{H,t},\bu_H-\bu_h,\e) \nonumber \\
&-\sigma_1(t)\big(b(\bu_H,\bu_{H,t}-\bu_{h,t},\e)+b(\bu_{H,t}-\bu_{h,t},\bu_h,\e)
+b(\bu_H-\bu_h,\bu_{h,t},\e)\big).
\end{align}
As seen in (\ref{inter02}), we have
\begin{align*}
&-b(\bu_{H,t},\bu_H-\bu_h,\e)-b(\bu_H-\bu_h,\bu_{h,t},\e) \\
\le & C\|\bu_H-\bu_h\|\|\e\|_1\big\{\|\bu_{H,t}\|_1^{1/2}\|\bu_{H,t}\|_2^{1/2}
+\|\bu_{h,t}\|_1^{1/2}\|\bu_{h,t}\|_2^{1/2}\big\}
\end{align*}
Use Lemmas \ref{est1.uh} and \ref{est1.uH} to conclude that
\begin{equation}\label{eeu06}
-b(\bu_{H,t},\bu_H-\bu_h,\e)-b(\bu_H-\bu_h,\bu_{h,t},\e) \le K(\tau^*(t))^{-3/2} \|\bu_H-\bu_h\|\|\e\|_1.
\end{equation}
Similarly
\begin{equation}\label{eeu07}
-b(\bu_H,\bu_H-\bu_h,\e)-b(\bu_H-\bu_h,\bu_h,\e) \le K(\tau^*(t))^{-1/2}
\|\bu_H-\bu_h\|\|\e\|_1
\end{equation}
and
\begin{equation}\label{eeu08}
-b(\bu_H,\bu_{H,t}-\bu_{h,t},\e)-b(\bu_{H,t}-\bu_{h,t},\bu_h,\e) \le K(\tau^*(t))^{-1/2}
\|\bu_{H,t}-\bu_{h,t}\|\|\e\|_1.
\end{equation}
Incorporate (\ref{eeu06})-(\ref{eeu08}) in (\ref{eeu05}) and integrate with respect to time. Re-use (\ref{eeu07}) to find
\begin{align}\label{eeu09}
\int_0^t \sigma_1(s)\Lambda_h(\e_s)~ds \le & \sigma_1(t)\big(b(\bu_H,\bu_H-\bu_h,\e) +b(\bu_H-\bu_h,\bu_h,\e)\big) \nonumber \\
+K\int_0^t &~e^{2\alpha s} \big\{(\tau^*(s))^{1/2} \|\bu_H-\bu_h\|+(\tau^*(s))^{3/2}
\|\bu_{H,t}-\bu_{h,t}\|\big\}\|\e(s)\|_1 ds \nonumber \\
\le & Ke^{2\alpha t}(\tau^*(t))^{3/2}\|\bu_H-\bu_h\|\|\e\|_1+C\int_0^t \sigma(s)\|\e(s)\|_1^2 ds \nonumber \\
& +K\int_0^t e^{2\alpha s} \big\{\|\bu_H-\bu_h\|^2+(\tau^*(s))^2\|\bu_{H,t}-\bu_{h,t}\|^2\big\}
~ds \nonumber \\
\le & \frac{\mu}{6}\sigma_1(t)\|\e\|_1^2+K\sigma(t)\|\bu_H-\bu_h\|^2+C\int_0^t \sigma(s) \|\e(s)\|_1^2 ds \nonumber \\
& +K\int_0^t e^{2\alpha s} \big\{\|\bu_H-\bu_h\|^2+(\tau^*(s))^2\|\bu_{H,t}-\bu_{h,t}\|^2\big\}
~ds.
\end{align}
Put (\ref{eeu09}) in (\ref{eeu04}) to obtain
\begin{align}\label{eeu10}
2\int_0^t \sigma_1(s)\|\e_s(s)\|^2 ds +\frac{\mu}{3}\sigma_1(t)\|\e\|_1^2 \le
C\int_0^t e^{2\alpha s}\|\te(s)\|_1^2 ds+C\int_0^t \sigma(s)\|\e(s)\|_1^2 ds \\
+C\sigma_1(t)\|\te\|_1^2+K\sigma(t)\|\bu_H-\bu_h\|^2+K\int_0^t e^{2\alpha s} \big\{\|\bu_H-\bu_h\|^2+(\tau^*(s))^2\|\bu_{H,t}-\bu_{h,t}\|^2\big\}~ds. \nonumber
\end{align}
Now, use Lemmas \ref{inter}, \ref{pee}, \ref{inter10} and \ref{err.et}. For the remaining part, we use (\ref{est.hH}) and Lemma $5.4$ and Theorem $5.1$ from \cite{GP11}. This completes the rest of the proof.
\end{proof}
\noindent
We present below, the pressure error estimate.
\begin{lemma}\label{prs} 
Under the assumptions of Lemma \ref{neg} and additionally that the assumption $({\bf B2'})$
holds, we have
\begin{equation}\label{prs1}
\|(p_h-p^h)(t)\| \le K(t)(\tau^*(t))^{-1}H^2.
\end{equation}
\end{lemma}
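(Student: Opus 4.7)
The approach is to apply the discrete inf-sup condition $({\bf B2'})$ to reduce the pressure bound to controlling the residual of the error equation tested against arbitrary $\bphi_h \in \bH_h$, and then to recycle the velocity error bounds already established. Subtracting (\ref{2lvlH2}) from (\ref{dwfh}) with general $\bphi_h \in \bH_h$ yields
\begin{equation*}
(p_h - p^h, \nabla \cdot \bphi_h) = (\e_t, \bphi_h) + \mu\, a(\e, \bphi_h) + \int_0^t \beta(t-s)\, a(\e(s), \bphi_h)\, ds - \Lambda_h(\bphi_h),
\end{equation*}
so $({\bf B2'})$ gives $\|(p_h - p^h)(t)\| \le C\sup_{\bphi_h \in \bH_h}|(p_h - p^h, \nabla \cdot \bphi_h)|/\|\nabla \bphi_h\|$.

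The real obstruction is controlling $(\e_t, \bphi_h)$: the preceding lemmas supply only the weighted integral bound of Lemma \ref{eeu} rather than a pointwise $\|\e_t(t)\|$ estimate of the required $(\tau^*(t))^{-1} H^2$ quality. My plan is to exploit that $\e_t \in \bJ_h$: writing $(\e_t, \bphi_h) = (\e_t, P_h \bphi_h)$ for the $L^2$-orthogonal projection $P_h:\bL^2 \to \bJ_h$, the function $P_h \bphi_h \in \bJ_h$ is admissible in the pressure-free error equation (\ref{err}), which lets me eliminate the time derivative and rewrite
\begin{equation*}
(p_h - p^h, \nabla \cdot \bphi_h) = \mu\, a(\e, \bphi_h - P_h \bphi_h) + \int_0^t \beta(t-s)\, a(\e(s), \bphi_h - P_h \bphi_h)\, ds - \Lambda_h(\bphi_h - P_h \bphi_h).
\end{equation*}

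Each of the three surviving terms is then bounded by $K(t)(\tau^*(t))^{-1} H^2\|\nabla \bphi_h\|$ using the $H^1$-stability $\|\nabla P_h \bphi_h\| \le C\|\nabla \bphi_h\|$ (read from (\ref{ph1})) together with the earlier lemmas: the elliptic term uses $\|\e(t)\|_1 \le K(t)(\tau^*(t))^{-1} H^2$ from Lemma \ref{eeu}; the memory term is first transformed via the integration-by-parts identity (\ref{pee01a}), then bounded by $\|\te(t)\|_1 \le K(t)(\tau^*(t))^{-1/2} H^2$ from Lemma \ref{inter10} plus a residual integral estimated by $\int_0^t (\tau^*(s))^{-1/2} ds = O(t^{1/2})$ against the bounded kernel $\beta$; and the nonlinear term is estimated exactly as in (\ref{inter02}), using the pointwise Galerkin error $\|\bu_H - \bu_h\|(t) \le K(t)(\tau^*(t))^{-1/2} H^2$ (from Theorem \ref{errest} applied separately to $\bu_H$ and $\bu_h$) together with the a priori bounds of Lemmas \ref{est.uh} and \ref{est.uH}. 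Combining and invoking $({\bf B2'})$ completes the proof.

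The main technical delicacy is justifying the $H^1$-stability of $P_h$ on the whole of $\bH_h$ (the version of (\ref{ph1}) as printed, restricted to $\bJ_h$, is tautological; I read it as the familiar Heywood--Rannacher stability of the $L^2$-projection). Without such a $P_h$ trick, the only alternative I see is to differentiate (\ref{err}) in time and test against $(\tau^*(t))^3 e^{2\alpha t}\e_t$, but the weight structure of Lemma \ref{eeu} would then yield only $\|\e_t(t)\| \le K(t)(\tau^*(t))^{-3/2} H^2$, which is too weak to reach the claimed pressure rate, so the orthogonality of $\e_t$ to $\bphi_h - P_h \bphi_h$ (or an analogous duality device) really seems essential.
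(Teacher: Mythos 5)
Your proposal is correct, and at the decisive step it takes a cleaner route than the paper. Both arguments start identically: invoke $({\bf B2'})$, write $(p_h-p^h,\nabla\cdot\bphi_h)$ from the error equation, and bound the elliptic, memory and nonlinear terms exactly as you do (the paper's (\ref{prs03}) records the same bounds $\|\e\|_1$, $\|\te\|_1$, and $K(\tau^*(t))^{-1/4}\|\bu_H-\bu_h\|$, fed by Lemmas \ref{eeu}, \ref{inter10}, \ref{inter} and Theorem \ref{errest}). The divergence is in how $(\e_t,\bphi_h)$ is handled. The paper passes to the continuous negative norm $\|\e_t\|_{-1}=\sup_{\bphi\in\bH_0^1}(\e_t,\bphi)/\|\bphi\|_1$, splits $(\e_t,\bphi)=(\e_t,\bphi-P_h\bphi)+(\e_t,P_h\bphi)$, uses the error equation on the second piece, and bounds the first piece by $Ch\|\e_t\|\,\|\bphi\|_1$ via the approximation property of $P_h$; this residual forces it to import the sub-optimal estimate $\|\e_t\|\le K(t)(\tau^*(t))^{-1}H$, which is the content of the separate (and fairly heavy) Lemma \ref{sub-op0}. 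You instead observe that $\e_t=\bu_{h,t}-\bu^h_t\in\bJ_h$ and that $\bphi_h-P_h\bphi_h$ is $L^2$-orthogonal to $\bJ_h$, so $(\e_t,\bphi_h)=(\e_t,P_h\bphi_h)$ exactly and the time-derivative term is eliminated with no residual; consequently your proof of the pressure estimate does not need Lemma \ref{sub-op0} at all. This is a genuine simplification (indeed the same orthogonality shows the paper's own term $(\e_t,\bphi-P_h\bphi)$ vanishes identically, so the $Ch\|\e_t\|$ bound there is not sharp). Both routes rest on the $H^1$-stability of $P_h$ on $\bH_0^1$, which the paper also invokes explicitly even though its stated (\ref{ph1}) is misprinted with domain $\bJ_h$; your reading of it as the Heywood--Rannacher stability is the intended one. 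The only thing you give up is that Lemma \ref{sub-op0} may still be wanted elsewhere as a statement of independent interest, but for (\ref{prs1}) your argument is complete and shorter.
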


\begin{proof}
The LBB condition $({\bf B2'})$ tells us that, for $t>0$
\begin{equation}\label{prs01}
\|p_h-p^h\|_{L^2/\R} \le C\sup_{0\neq \bphi_h\in\bH_h} \frac{(p_h-p^h, \nabla\cdot\bphi_h)}{\|\bphi_h\|_1}.
\end{equation}
From (\ref{dwfh}) and (\ref{2lvlH2}), we have, for $\bphi_h\in \bH_h$
\begin{align}\label{prs02}
(p_h-p^h, \nabla \cdot \bphi_h)=(\e_t,\bphi_h) +\mu a(\e,\bphi_h)+\int_0^t \beta(t-s) a(\e(s),\bphi_h)~ds-\Lambda_h(\bphi_h).
\end{align}
Using (\ref{pee01a}) and (\ref{inter02}), we obtain from (\ref{prs02})
\begin{align}\label{prs03}
(p_h-p^h, \nabla \cdot \bphi_h) \le C\|\bphi_h\|_1\big\{\|\e_t\|_{-1,h}+\|\e\|_1+\|\te\|_1+
\big(e^{-2\alpha t}\int_0^t e^{2\alpha s}\|\te(s)\|_1^2 ds\big)^{1/2}\big\} \nonumber \\
+K(\tau^*(t))^{-1/4}\|\bu_H-\bu_h\|\|\bphi_h\|_1,
\end{align}
where
\begin{align*}
\|\e_t\|_{-1,h} =\sup_{0\neq \bphi_h\in\bH_h}\frac{<\e_t,\bphi_h>}{\|\bphi_h\|_1}.
\end{align*}
Taking supremum over a bigger set, we find that
\begin{align}\label{neg.h}
\|\e_t\|_{-1,h} \le \|\e_t\|_{-1} =\sup_{0\neq\bphi\in\bH_0^1} \frac{<\e_t,\bphi>}{\|\bphi\|_1}.
\end{align}
Incorporate this in (\ref{prs03}) and now, for $0\neq\bphi_h$, divide by $\|\bphi_h\|_1$.
Use (\ref{prs01}) and then Lemmas \ref{eeu}, \ref{inter10} and \ref{inter} to get
\begin{align}\label{prs04}
\|p_h-p^h\|_{L^2/\R} \le C\|\e_t\|_{-1}+K(t)(\tau^*(t))^{-1}H^2.
\end{align}
As usual, we have rewritten $\bu_H-\bu_h$ as $(\bu-\bu_h)-(\bu-\bu_H)$ and used Theorem \ref{errest}. \\
For the negative norm, we have, for $\bphi\in\bH_0^1$
$$ (\e_t,\bphi)= (\e_t,\bphi-P_h\bphi)+(\e_t,P_h\bphi). $$
Since $P_h\bphi\in\bJ_h$, we use the error equation (\ref{err})
\begin{align*}
(\e_t,\bphi)= (\e_t,\bphi-P_h\bphi)-\mu a(\e,P_h\bphi)-\int_0^t \beta(t-s) a(\e(s), P_h\bphi)~ds+\Lambda_h(P_h\bphi),
\end{align*}
where $\Lambda_h$ is given by (\ref{lamb}).
Using discrete incompressibility condition, $H^1$-stability of $P_h$, $({\bf B1})$ and (\ref{inter02}), we obtain
\begin{align}\label{prs05}
(\e_t,\bphi) \le &~Ch\|\e_t\|\|\bphi\|_1+C\big(\|\e\|_1+\int_0^t \beta(t-s)\|\e(s)\|_1 ds
\big)\|\bphi\|_1 \nonumber \\
+&~C\|\bu_H-\bu_h\|\big(\|\bu_H\|_1^{1/2}\|\bu_H\|_2^{1/2}+\|\bu_H\|_1^{1/2}
\|\bu_H\|_2^{1/2}\big)\|\bphi\|_1
\end{align}
We can rewrite the integral term as in (\ref{prs03}) and use the estimates used in (\ref{prs03}) to obtain from (\ref{prs05})
\begin{align*}
\|\e_t\|_{-1} \le Ch\|\e_t\|+K(t)(\tau^*(t))^{-1}H^2.
\end{align*}
Incorporate this in (\ref{prs04}).
\begin{align}\label{prs06}
\|p_h-p^h\|_{L^2/\R} \le CH\|\e_t\|+K(t)(\tau^*(t))^{-1}H^2.
\end{align}
Assuming the following sub-optimal estimate (since it is of $O(H)$)
\begin{align}\label{sub-op}
\|\e_t\| \le K(t)(\tau^*(t))^{-1}H,
\end{align}
we complete the rest of the proof.
\end{proof}
\begin{remark}
It is sufficient to have a sub-optimal estimate of $\|\e_t\|$ to attain our result. Instead, had we chosen to estimate the first term on the right-hand side of (\ref{prs02}) as
$$ (\e_t,\bphi_h) \le \|\e_t\|\|\bphi_h\| \le C\|\e_t\|\|\bphi_h\|_1, $$
we would require an optimal estimate of $\|\e_t\|$, which would be $O((\tau^*(t))^{-3/2}H^2)$.
And then the pressure error estimate would read $O((\tau^*(t))^{-3/2}H^2)$, which is worse that the estimate we have attained here, that is (\ref{prs1}).
\end{remark}

\begin{lemma}\label{sub-op0}
Under the assumptions of Lemma \ref{neg}, the following holds:
\begin{align}\label{sub-op1}
\|\e_t\| \le K(t)(\tau^*(t))^{-1}H.
\end{align}
\end{lemma}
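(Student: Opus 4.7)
The plan is to run an energy estimate on the time-differentiated form of (\ref{err}) against a weighted version of $\e_t$, in the spirit of Lemmas \ref{eeu} and \ref{err.et}. Differentiating (\ref{err}) in $t$ and using $\beta_t(t-s)=-\delta\beta(t-s)$ gives, for $\bphi_h\in\bJ_h$,
\begin{equation*}
(\e_{tt},\bphi_h)+\mu a(\e_t,\bphi_h)+\gamma a(\e,\bphi_h)-\delta\int_0^t \beta(t-s)a(\e(s),\bphi_h)\,ds=\Lambda_{h,t}(\bphi_h),
\end{equation*}
where $\Lambda_{h,t}$ denotes the time derivative of $\Lambda_h$ at fixed $\bphi_h$:
\[
\Lambda_{h,t}(\bphi_h)=b(\bu_{H,t},\bu_H-\bu_h,\bphi_h)+b(\bu_H,\bu_{H,t}-\bu_{h,t},\bphi_h)+b(\bu_{H,t}-\bu_{h,t},\bu_h,\bphi_h)+b(\bu_H-\bu_h,\bu_{h,t},\bphi_h).
\]
I would then test with $\bphi_h=\sigma_1(t)\e_t$ and integrate over $(0,t)$; since $\sigma_1(0)=0$, the boundary term at the initial time disappears.

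The term $\gamma\sigma_1 a(\e,\e_s)$ is handled by integration by parts in $s$, producing a boundary piece $\sigma_1(t)\|\e(t)\|_1^2$ (controlled by Lemma \ref{eeu}) and an integrand $\sigma_{1,s}\|\e\|_1^2$ (controlled by Lemma \ref{pee}). The memory contribution $\delta\sigma_1\int\beta a(\e,\e_s)\,ds$ is bounded via Cauchy-Schwarz, Young's inequality and the exponential decay of $\beta$, and falls again under Lemmas \ref{pee} and \ref{eeu}. The leading energy contribution produces $\sigma_1(t)\|\e_t(t)\|^2+\mu\int_0^t\sigma_1\|\e_s\|_1^2\,ds$ on the left-hand side.

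The main obstacle is the nonlinear piece $\int_0^t \sigma_1(s)\Lambda_{h,s}(\e_s)\,ds$. I would transfer the time derivative from $\e_s$ onto the remaining factors through integration by parts in $s$, exactly along the lines of (\ref{eeu05})-(\ref{eeu09}) of Lemma \ref{eeu}: this produces a boundary contribution $\sigma_1(t)\Lambda_h(\e(t))$ together with integrals of $\sigma_1\Lambda_{h,s}(\e)$ and $\sigma_{1,s}\Lambda_h(\e)$. Each resulting piece is estimated via Lemma \ref{nonlin} (in a form that avoids $\|\e_t\|_1$ where needed), the decomposition $\bu_{H,s}-\bu_{h,s}=(\bu_s-\bu_{h,s})-(\bu_s-\bu_{H,s})$ together with Lemma \ref{err.et} applied on both the $h$- and $H$-grid, Theorem \ref{errest}, Lemma \ref{err.so}, and the a priori bounds of Lemmas \ref{est.uh}, \ref{est1.uh}, \ref{est.uH} and \ref{est1.uH}. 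Because Lemma \ref{err.et} only supplies an integrated $O(H^2)$ bound on $\bu_{H,s}-\bu_{h,s}$, and because the boundary piece $\sigma_1(t)\Lambda_h(\e(t))$ directly couples $\|\bu_H-\bu_h\|$ with the unknown $\|\e(t)\|_1$, after Young's inequality and kickback the right-hand side can only be driven down to order $H^2$ rather than $H^4$. Applying Gronwall's lemma then gives $\sigma_1(t)\|\e_t(t)\|^2\le K(t)H^2$, which rearranges to (\ref{sub-op1}).

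The hardest step is thus the nonlinear control: any attempt to push $H^2$ up to $H^4$ on the right-hand side would require a pointwise optimal bound on $\bu_{H,t}-\bu_{h,t}$, which is exactly what we lack and which forces the final rate to be the sub-optimal $O(H)$ announced in the preceding remark.
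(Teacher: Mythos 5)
Your overall skeleton --- differentiate (\ref{err}) in time, test with $\sigma_1(t)\e_t$, and accept that the available bounds on $\bu_{H,t}-\bu_{h,t}$ only yield $O(H^2)$ on the right-hand side, hence the sub-optimal rate --- is the same as the paper's. But two of your steps, as described, would not go through.

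First, the nonlinear term. After differentiating the error equation the term to handle is $\Lambda_{h,t}(\e_t)$, not $\Lambda_h(\e_t)$. The integration by parts you describe, with boundary piece $\sigma_1(t)\Lambda_h(\e(t))$ and integrands $\sigma_1\Lambda_{h,s}(\e)$ and $\sigma_{1,s}\Lambda_h(\e)$, is the identity for $\int_0^t\sigma_1(s)\Lambda_h(\e_s)\,ds$ used in (\ref{eeu05})--(\ref{eeu09}); it is not the term you actually have. A correct transfer of the derivative in $\int_0^t\sigma_1(s)\Lambda_{h,s}(\e_s)\,ds$ generates either $\Lambda_h(\e_{ss})$ or $\Lambda_{h,ss}(\e)$, i.e.\ second time derivatives of $\e$ or of $\bu_H-\bu_h$, neither of which is controlled. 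The point you are missing is that no integration by parts is needed here: unlike in Lemma \ref{eeu}, you are testing the \emph{differentiated} equation with $\e_t$, so the dissipation $\mu\sigma_1(t)\|\e_t\|_1^2$ is available on the left, and the paper simply bounds $\sigma_1(t)\Lambda_{h,t}(\e_t)$ via Lemma \ref{nonlin} by
$C\sigma_1(t)\big(\|\bu_{H,t}\|_1+\|\bu_{h,t}\|_1\big)\|\bu_H-\bu_h\|_1\|\e_t\|_1
+C\sigma_1(t)\big(\|\bu_H\|_1+\|\bu_h\|_1\big)\|\bu_{H,t}-\bu_{h,t}\|_1\|\e_t\|_1$
and kicks back the factor $\|\e_t\|_1$; the remaining pieces are handled by Theorem \ref{errest}, Lemma \ref{err.so} (on both grids) and Lemmas \ref{est1.uh}, \ref{est1.uH}.

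Second, the closing of the estimate. The term $\tfrac12\sigma_{1,t}(t)\|\e_t\|^2$ leaves $C\int_0^t\sigma(s)\|\e_s(s)\|^2\,ds$ on the right-hand side, which carries one power of $\tau^*$ less than the quantity $\sigma_1(t)\|\e_t\|^2$ on the left and is therefore not of Gronwall form. The paper spends the entire second half of the proof on it: testing (\ref{err}) with $\sigma(t)\e_t$ to bound $\int_0^t\sigma(s)\|\e_s\|^2\,ds$ in terms of $\int_0^t e^{2\alpha s}\|\e\|_1^2\,ds$ plus $K(t)H^2$, and then testing (\ref{err}) with $e^{2\alpha t}\e$ to bound that last integral by $K(t)H^2$. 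Your proposal jumps from the energy identity straight to Gronwall and omits this cascade, so as written the argument does not close.
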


\begin{proof}
We differentiate the error equation (\ref{err}) with respect to time and then incorporate (\ref{pee01a}).
\begin{align}\label{errt}
(\e_{tt},\bphi_h) +\mu a(\e_t,\bphi_h)+\gamma a(\e,\bphi_h)-\delta\gamma a(\te,\bphi_h) +\delta^2\int_0^t \beta(t-s) a(\te(s), \bphi_h)~ds \nonumber \\
= \Lambda_{h,t}(\bphi_h)~~\forall \bphi_h \in \bJ_h,
\end{align}
where
\begin{align}\label{lambt}
\Lambda_{h,t}(\bphi_h)=& b(\bu_{H,t},\bu_H-\bu_h,\bphi_h)+b(\bu_H,\bu_{H,t}-\bu_{h,t},
\bphi_h)+b(\bu_{H,t}-\bu_{h,t},\bu_h,\bphi_h) \nonumber \\
&+b(\bu_H-\bu_h,\bu_{h,t},\bphi_h).
\end{align}
Choose $\bphi_h=\sigma_1(t)\e_t$ in (\ref{errt}) to find
\begin{align}\label{sub-op01}
\frac{1}{2}\frac{d}{dt}\big\{\sigma_1(t)\|\e_t\|^2\big\}+\mu\sigma_1(t) \|\e_t\|_1^2 \le 
\frac{\sigma_{1,t}(t)}{2}\|\e_t\|^2+\sigma_1(t)\Lambda_{h,t}(\e_t) \nonumber \\
C\sigma_1(t)\|\e_t\|_1\big\{\|\e\|_1+\|\te\|_1+\int_0^t \beta(t-s)\|\te(s)\|_1 ds \big\}. 
\end{align}
Using Lemma \ref{nonlin}, we observe that
\begin{align}\label{sub-op02}
b(\bu_{H,t},\bu_H-\bu_h,\sigma_1(t)\e_t)&= \frac{1}{2}\sigma_1(t)\big\{((\bu_{H,t}\cdot\nabla)
(\bu_H-\bu_h), \e_t)-((\bu_{H,t}\cdot\nabla)\e_t, \bu_H-\bu_h)\big\}  \nonumber \\
\le C\sigma_1(t)\|\bu_{H,t}\|^{1/2}\|\bu_{H,t}\|_1^{1/2}&\big(\|\bu_H-\bu_h\|_1\|\e_t\|^{1/2} \|\e_t\|_1^{1/2}+\|\e_t\|_1\|\bu_H-\bu_h\|^{1/2}\|\bu_H-bu_h\|_1^{1/2}\big) \nonumber \\
\le C\sigma_1(t)&\|\bu_{H,t}\|_1\|\bu_H-\bu_h\|_1\|\e_t\|_1.
\end{align}
Repeating (\ref{sub-op02}), we obtain
\begin{align}\label{sub-op03}
\sigma_1(t)\Lambda_{h,t} \le &~C\sigma_1(t)\big(\|\bu_{H,t}\|_1+\|\bu_{h,t}\|_1\big)
\|\bu_H-\bu_h\|_1\|\e_t\|_1 \nonumber \\
&~C\sigma_1(t)\big(\|\bu_H\|_1+\|\bu_h\|_1\big)\|\bu_{H,t}-\bu_{h,t}\|_1\|\e_t\|_1
\end{align}
Incorporate (\ref{sub-op03}) in (\ref{sub-op01}). Use Young's inequality and kickback argument. Then integrate the resulting inequality to observe that
\begin{align}\label{sub-op04}
\sigma_1(t)\|\e_t\|^2+\mu\int_0^t \sigma_1(s) \|\e_s(s)\|_1^2 ds &\le C\int_0^t \sigma(s) \|\e_s(s)\|^2~ds+C\int_0^t \sigma_1(s)\big\{\|\e(s)\|_1^2+\|\te(s)\|_1^2\big\}~ds \nonumber \\
+C&\int_0^t \e^{2\alpha s}\|\te(s)\|_1^2 ds+K(t)H^2\int_0^t \sigma(s)(\|\bu_{H,s}(s)\|_1^2 +\|\bu_{h,s}(s)\|_1^2)~ds \nonumber \\
+K&\int_0^t \sigma_1(s) \|(\bu_{H,s}-\bu_{h,s})(s)\|_1^2 ds.
\end{align}
We have used Lemmas \ref{est.uh} and \ref{est.uH} to bound $\|\bu_H\|_1$ and $\|\bu_h\|_1$.
As earlier, we have rewritten $\bu_H-\bu_h$ as $(\bu-\bu_h)-(\bu-\bu_H)$ and have used Theorem \ref{errest}.
Use Lemmas \ref{est1.uh} and \ref{est1.uH} to bound $\|\bu_{H,t}\|_1$ and $\|\bu_{h,t}\|_1$ under integral on the right-hand side of (\ref{sub-op04}). Next, use Lemmas \ref{pee} and \ref{inter}. Use Lemma \ref{err.so} to estimate the last term on the right-hand side of (\ref{sub-op04}).
\begin{align}\label{sub-op05}
\sigma_1(t)\|\e_t\|^2+\int_0^t \sigma_1(s) \|\e_s(s)\|_1^2 ds \le C\int_0^t \sigma(s)
\|\e_s(s)\|^2~ds+K(t)H^2.
\end{align}
To estimate the term on the right-hand side of (\ref{sub-op05}), we put $\bphi_h=\sigma(t) \e_t$ in (\ref{err}).
\begin{align}\label{sub-op06}
\sigma(t)\|\e_t\|^2+\frac{\mu}{2}\frac{d}{dt}\{\sigma(t)\|\e\|_1^2\}= \frac{1}{2}\sigma_t(t)
\|\e\|_1^2-\sigma(t)\int_0^t \beta(t-s) a(\e(s),\e_t)~ds+\sigma(t)\Lambda_h(\e_t).
\end{align}
We handle the integral term as in (\ref{int}).
We take care of the non-linear term as earlier.
$$ \Lambda_h(\e_t) \le K\|\e_t\|\|\bu_H-\bu_h\|_1(\|\bu_h\|_2^{1/2}+\|\bu_H\|_2^{1/2}). $$
Incorporate this in (\ref{sub-op06}) and after simplifying, integrate with respect to time.
\begin{align}\label{sub-op07}
\sigma(t)\|\e\|_1^2+\int_0^t \sigma(s)\|\e_s(s)\|^2 ds \le C\int_0^t e^{2\alpha s}
\|\e(s)\|_1^2 ds+K\int_0^t e^{2\alpha s}\|\E(s)\|_1^2 ds.
\end{align}
Apply Lemma \ref{err.so} to obtain
\begin{align}\label{sub-op08}
\sigma(t)\|\e\|_1^2+\int_0^t \sigma(s)\|\e_s(s)\|^2 ds \le C\int_0^t e^{2\alpha s}
\|\e(s)\|_1^2 ds+K(t)H^2.
\end{align}
For the term on the right-hand side of (\ref{sub-op08}), we put $\bphi_h=e^{2\alpha t}\e$
in (\ref{err}) to find
\begin{align}\label{sub-op09}
\frac{1}{2}\frac{d}{dt}\{e^{2\alpha t}\|\e\|^2\}+\mu e^{2\alpha t}\|\e\|_1^2+ \int_0^t
\beta(t-s) a(\e(s), e^{2\alpha t}\e)~ds= \alpha e^{2\alpha t}\|\e\|^2+e^{2\alpha t} \Lambda_h(\e).
\end{align}
Note that
$$ \Lambda_h(\e) \le (\|\bu_h\|_1+\|\bu_H\|_1)\|\e\|_1\|\bu_H-\bu_h\|_1 \le K\|\e\|_1
\|\bu_H-\bu_h\|_1. $$
We incorporate this in (\ref{sub-op09}) and then integrate with respect to time. Drop the double integral term.
\begin{align*}
e^{2\alpha t}\|\e\|^2+\int_0^t e^{2\alpha s}\|\e(s)\|_1^2 ds \le C\int_0^t e^{2\alpha s}
\|\e(s)\|^2 ds+K\int_0^t e^{2\alpha s}\|(\bu_H-\bu_h)(s)\|_1^2 ds.
\end{align*}
Use Lemmas to observe that
$$ \|\e\|^2+e^{-2\alpha t}\int_0^t e^{2\alpha s}\|\e(s)\|_1^2 ds \le K(t)H^2. $$
Use this to complete the estimate in (\ref{sub-op08}), which, we use, in turn, in (\ref{sub-op05}) to complete the rest of the proof.
\end{proof}

\begin{remark}
In the Lemmas (\ref{err.et}) and (\ref{prs}), we have obtained
\begin{align}\label{rmk0}
\|(\bu_h-\bu^h)(t)\|_1+\|(p_h-p^h)(t)\| \le K(t)(\tau^*(t))^{-1}H^2.
\end{align}
Combining this with Theorem \ref{errest}, we have
\begin{align}\label{rmk}
\|(\bu-\bu^h)(t)\|_1+\|(p-p^h)(t)\| \le K(t)\big\{(\tau^*(t))^{-1/2}h+(\tau^*(t))^{-1} H^2\big\}.
\end{align}
We note that for $H=O(h^{1/2})$, both Galerkin FE and two-level methods produce similar results, away from initial time. But while the first method solves the full non-linear equation on a grid of mesh-size $h$, the second only does that on a coarse mesh (grid of mesh-size $H,~H<<h$) and supplements it by solving a linearized problem on the fine grid (mesh-size $h$).
\end{remark}
\begin{remark}
Note that the improvement is shown only in energy norm. In other words, in the $L^2$-norm of velocity error, we see no improvement of the two-level over Galerkin FE. The result achieved here, i.e, (\ref{rmk0}), is nothing but super-convergence result and hence, we can not expect improved estimate for $L^2$ velocity error (at least not by the proof techniques used here). We would like to point that out that this super-convergence is also established in \cite{GP08} for Navier-Stokes equations, but for non-linear Galerkin method, which is similar in nature to two-level or two-grid methods. And in \cite{FGN12a}, a static two-grid method (a discrete steady Oseen-type problem is considered at the second level ) is discussed for Navier-Stokes equations, where numerical results show that, for mini element, $L^2$ velocity error has same rate of convergence for both Galerkin FE and the static two-grid methods.
\end{remark}

\end{document}